\title{Hochschild (co)homology and Koszul duality}
\author{Estanislao Herscovich
\footnote{Departamento de Matem\'atica, FCEyN, Universidad de Buenos Aires, Argentina. 
The author is also a reserch member of CONICET (Argentina). 
On leave of absence from Institut Joseph Fourier, Universit\'e Grenoble I, France.}}
\date{}
\newtheorem{theorem}{Theorem}[section]
\newtheorem{proposition}[theorem]{Proposition}
\newtheorem{lemma}[theorem]{Lemma}
\newtheorem{remark}[theorem]{Remark}
\newtheorem{fact}[theorem]{Fact}
\numberwithin{equation}{section}
\def\cl#1{{\langle #1 \rangle}}
\def\place{{-}}
\def\Ker{\mathop{\rm Ker}\nolimits}
\newcommand\ZZ{{\mathbb{Z}}}
\newcommand\NN{{\mathbb{N}}}
\def\place{{-}}
\begin{document}

\maketitle
                                                     
\hrulefill
\begin{abstract}
In this article we discuss two different but related results on Hochschild (co)homology and the theory of Koszul duality. 
On the one hand, we prove essentially that the Tamarkin-Tsygan calculus of an Adams connected augmented dg algebra and of its Koszul dual are dual.  
This uses the fact that Hochschild cohomology and homology may be regarded as a twisted construction of some natural (augmented) dg algebras and dg modules over the former. 
In particular, from these constructions it follows that the computation of the cup product on Hochschild cohomology and cap product on Hochschild homology of a Koszul algebra is directly 
computed from the coalgebra structure of $\operatorname{Tor}_{\bullet}^{A}(k,k)$ (the first of these results is proved differently in \cite{BGSS}). 
We even generalize this situation by studying twisting theory of $A_{\infty}$-algebras to compute the algebra structure of Hochschild (co)homology of more general algebras. 
\end{abstract}

\textbf{Mathematics subject classification 2010:} 16E40, 16E45, 16S37, 16W50, 18G55.

\textbf{Keywords:} Koszul algebra, Yoneda algebra, homological algebra, dg algebras, $A_{\infty}$-algebras.

\hrulefill
\section{Introduction}

The aim of our work is to highlight several aspects of the relation between the theory of Hochschild (co)homology and the Koszul theory of Adams connected augmented (dg) algebras. 
We would had supposed that most of what we state was widely known to the experts, but some articles appearing in the literature could imply that this is not exactly the case. 
In particular, we will focus on two different aspects that we shall now describe briefly. 

On the one hand, following B. Keller in \cite{Ke}, given an augmented dg algebra $A$ the \emph{Koszul dual} $E(A)$ is defined as the graded dual of the bar construction $B^{+}(A)$ of $A$. 
If $A = T(V)/\cl{R}$ is a Koszul algebra, with $R \subseteq V^{\otimes 2}$, $E(A)$ is quasi-isomorphic to the usual Koszul dual $A^{!} = T(V^{*})/\cl{R^{\perp}}$, where $R^{\perp} \subseteq (V^{*})^{\otimes 2}$ is the annihilator of $R$ under the identification $(V^{*})^{\otimes 2} \rightarrow (V^{\otimes 2})^{*}$ sending $f \otimes g$ to $(v \otimes w) \mapsto - f(v) g(w)$, 
where $V^{*}$ is considered to be concentrated in cohomological degree $1$. 
In this latter case, it was already observed by B. Feigin and B. Tsygan that there is in fact a duality pair between the Hochschild homology groups of $A$ and $A^{!}$ (only regarded as graded vector spaces). 
In fact, this can be directly deduced from (or following the lines of) the isomorphism between the corresponding cyclic homology groups given in \cite{FT}, Thm. 2.4.1, 
where the authors further suppose that the base field $k$ has characteristic zero, even though this assumption is not strictly necessary if we are interested only in Hochschild homology groups 
(see \cite{Lo} for a more detailed analysis on the corresponding gradings). 
Furthermore, an isomorphism of graded algebras between the Hochschild cohomology groups $HH^{\bullet}(E(A))$ and $HH^{\bullet}(A)$ in case $A$ is also a Koszul algebra was already announced by 
R.-O. Buchweitz in the Conference on Representation Theory held at Canberra on July 2003. 
This result was further generalized by B. Keller in the preprint \cite{Kel}, where he proved that there is in fact a quasi-isomorphism of $B_{\infty}$-algebras between the corresponding Hochschild cohomology cochain complexes. 
On the other hand, Y. F\'elix, L. Menichi and J.-C. Thomas proved in \cite{FMT}, Prop. 5.1 and 5.3, that given a simply connected coaugmented dg coalgebra $C$ over a field, 
there is a isomorphism of Gerstenhaber algebras between the Hochschild cohomology $HH^{\bullet}(C^{\#})$ of the graded dual $C^{\#}$ of $C$ and the Hochschild cohomology 
$HH^{\bullet}(\Omega^{+}(C))$ of the cobar construction of $C$. 
By specializing this result to the case $A = C^{\#}$, and using the obvious result $\Omega^{+}(C) \simeq E(A)$, we get that the Hochschild cohomology of $A$ and $E(A)$ are isomorphic as Gerstenhaber algebras. 
The simply connectedness assumption is however completely unusual in the realm of (generalizations of) Koszul algebras. 
We shall in fact get rid of this assumption and provide a proof (under more typical grading hypotheses for our case) of such an isomorphism. 
Moreover, we shall also prove that the Hochschild homology groups $HH_{\bullet}(A)$ and $HH_{\bullet}(E(A))^{\#}$ are isomorphic Gerstenhaber modules over the corresponding isomorphic 
Gerstenhaber algebras given by Hochschild cohomology (see Theorems \ref{theorem:koszuldual} and \ref{theorem:koszuldual2}). 
In fact, we notice that the Tamarkin-Tsygan calculus of $A$ and of its Koszul dual $E(A)$ are dual (see Remark \ref{rem:tt}). 

On the other hand, given a nonnegatively graded connected algebra $A$ over a field $k$, we know that the homology space $C = \operatorname{Tor}_{\bullet}^{A}(k,k)$ is in fact 
a coaugmented $A_{\infty}$-coalgebra such that its graded dual is quasi-isomorphic to the Koszul dual $E(A)$ of $A$. 
In the case $A$ is (quadratic) Koszul it has been known for a long time that the previous $A_{\infty}$-coalgebra structure reduces to a dg coalgebra with zero differential (this in fact being equivalent to the Koszul property). 
This knowledge suffices to explicitly compute the graded algebraic structure on the Hochschild cohomology $HH^{\bullet}(A)$ by using the minimal projective resolution of $A$ as $A$-bimodules. 
Indeed, by fairly general arguments on twists of dg algebras we see that there is a quasi-isomorphism of augmented dg algebras between the Hochschild cochain complex of $A$ and 
a twist of the dg algebra hom space $\mathcal{H}om(C,A)$. 
The same arguments may be applied to compute the module structure on the Hochschild homology $HH_{\bullet}(A)$ given by the cap product over the graded algebra $HH^{\bullet}(A)$ (see Theorem \ref{theorem:koszul} and the last paragraph of Subsubsection \ref{subsubsec:applkos}). 
The results concerning the computation of the algebra structure of Hochschild cohomology of a Koszul algebra give in fact a direct proof of the main result of \cite{BGSS} 
(stated at the introduction, p. 443, or after as Theorem 2.3), which is just the basis-dependent expression of the result we stated in Theorem \ref{theorem:koszul} for the particular case of Koszul algebras. 
We further generalize this situation by making use of twists of $A_{\infty}$-algebras in Section \ref{sec:ainf}. 
More precisely, we deduce an explicit recipe to compute the graded algebra structure of the Hochschild (co)homology of an augmented dg algebra $A$ over a field provided with a minimal coaugmented $A_{\infty}$-coalgebra quasi-isomorphic to the bar construction of $A$ (see Theorem \ref{theorem:final}). 
The case of cohomology gives in particular another proof of the main result of \cite{XX}. 
This may be particularly useful for multi-Koszul algebras, or any algebra for which there is an explicit description of the coaugmented $A_{\infty}$-coalgebra structure on $\operatorname{Tor}_{\bullet}^{A}(k,k)$ (in the former two cases, for it is just the graded dual of the augmented $A_{\infty}$-algebra structure on $\mathcal{E}xt_{\bullet}^{A}(k,k)$). 

The article is organised as follows. 
In Section \ref{sec:prel} we recall the basic definitions and constructions of graded and dg modules over a base ring $k$, and of graded and dg (co)modules over graded and dg (co)algebras, respectively. 
Even though the contents of this part are completely standard, they are needed for explicitly stating the notation we are going to use, as well as the main sign conventions we shall follow. 
The following section contains the basic rudiments of Hochschild (co)homology theory for (unitary) dg algebras over a field $k$, together with the basic bar and cobar constructions for augmented dg algebras and coaugmented 
dg coalgebras, respectively. 
We explicitly recall the (supposedly well-known) relation between Hochschild (co)homology theory and twists of augmented dg algebras (see Fact \ref{fact:hh}). 
The aim of this long section is to provide a coherent explanation of all the formulas we will use to prove our main results  in a completely explicit manner, where we have worked out all the corresponding signs. 
We believe the experts should be aware of the contents of the second section and they could skip them if they want to. 
At the end of Section \ref{sec:hoch} we provide a direct corollary that allows to compute the multiplicative structure on Hochschild cohomology and the corresponding module structure on Hochschild homology of a Koszul algebra, which in our situation is just a consequence of the way we presented the Hochschild (co)homology complexes (see Theorem \ref{theorem:koszul}). 

In Section \ref{sec:kosdual} we use the tools appearing in the previous sections in order to prove one of the main results of the article: the Tamarkin-Tsygan calculus of an Adams connected augmented dg algebra 
and of its Koszul dual are dual. 
The last section is devoted to apply the theory of twisted (augmented) $A_{\infty}$-algebras to obtain a method to explicitly calculate the graded algebra structure of the Hochschild (co)homology of an augmented dg algebra $A$ over a field provided with a minimal coaugmented $A_{\infty}$-coalgebra quasi-isomorphic to the bar construction of $A$ (see Theorem \ref{theorem:final}). 

We would also like to add a word of warning concerning the reading of this manuscript. 
There are several possible conventions for the objects we shall consider, concerning signs, definitions of morphisms, etc. 
For this reason we have tried to be as careful as possible explaining all the necessary details in order make all our conventions precise, which on the other hand has considerably increased the extension of the text. 
We believe however that this amount of precision is indeed necessary, and (we thus hope that) the article may be also regarded as a possible source of a careful choice of harmonized signs and structures conventions. 
Since the check of the validity of these conventions is often lengthy but straightforward, we shall usually leave the reader to control that the (precise) conventions (\textit{e.g.} signs or commutativity of diagrams) hold. 
We shall however sometimes sketch the main ingredients of the mentioned proof in case we think they are necessary. 

The author would like to express his deep gratitude to Professor Clas L\"ofwall for his careful explanation (by e-mail) of the results by Feigin and Tsygan mentioned before, 
with special care on the sign and grading conventions. 

\section{Preliminaries on basic algebraic structures}
\label{sec:prel}

We recall the following basic facts, which will also establish the notation. 
From now on, $k$ will denote a commutative ring with unit (which we also consider as a unitary graded ring concentrated in degree zero). 
By \emph{module} over $k$ we will always mean a symmetric bimodule over $k$ 
(although several constructions can be clearly performed without this symmetry assumption, we shall suppose it in order to simplify the exposition). 
We fix an abelian group $G$ of the form $\ZZ \times G'$ (or also $\ZZ/2.\ZZ \times G'$ to which all these construction can be adapted straightforward), which we write additively. 
The character map used in the Koszul sign rule will be just given by the projection on the first component of $G$. 
A typical element of $G$ will be denoted by $g$, $h$, etc, and the corresponding first component $i_{g}$, $i_{h}$, etc. 
For an object $M$, we will denote by $\mathrm{id}_{M}$ the identity endomorphism of $M$. 
We also remark that the expression \emph{map} between to graded or dg modules over $k$ (or maybe provided with further structure) will always mean the mapping between 
(say) the underlying modules or even the underlying sets, which comes from forgetting all the extra structure. 
This might be sometimes useful if we want to stress just the values of morphisms at elements of a graded or dg module over $k$. 

\subsection{\texorpdfstring{Graded and differential graded modules over a fixed commutative ring $k$}{Graded and differential graded modules over a fixed commutative ring k}} 
\label{subsec:int}

A \emph{(cohomological) graded module} over $k$ is a module over $k$ provided with a decomposition of $k$-modules of the form $M = \oplus_{g \in G} M^{g}$.
If $m$ is a nonzero homogeneous element of a graded module $M$ over $k$ we define the \emph{degree} $\deg m \in \ZZ$ of $m$ 
and the \emph{weight (internal degree, or Adams degree)} $w(m) \in G'$ of $m$ by $m \in M^{(\deg m, w(m))}$. 
We say in this case that the \emph{complete degree} of $m$ is $(\deg m, w(m)) \in G$. 
The commutative ring $k$ will be considered as a graded module with the grading given by $k^{g} = 0$ if $g \neq 0_{G}$, and $k^{0_{G}} = k$. 
If $M$ is a graded module and $g \in G$, define $M[g]$ to be the graded module over $k$ with the same underlying structure of $k$-module 
but with a complete degree shift given by $M[g]^{g'} = M^{g+g'}$, for all $g' \in G$. 
We shall usually write $M[i]$, for $i \in \ZZ$, instead of the more correct $M[(i,0_{G'})]$, for it causes no confusion. 
For any two graded modules $M$ and $N$ over $k$, $\operatorname{hom}_{k}(M,N)$ is the space of $k$-linear maps of complete degree zero, 
\textit{i.e.} $f(M^{g}) \subseteq N^{g}$ for all $g \in G$. 
The internal space of morphisms if given by $\mathcal{H}om_{k}(M,N) = \oplus_{g \in G} \operatorname{hom}_{k} (M,N[g])$ and it is obviously a graded $k$-module. 
The graded module $\mathcal{H}om_{k} (M,k)$ will be also denoted by $M^{\#}$. 
Note that in this case the $g$-th graded component of $M^{\#}$ is given by $(M^{-g})^{*}$, where $(\place)^{*}$ denotes the usual dual for modules over $k$, and by the previous comments 
we have that $(M[g])^{\#} = (M^{\#})[-g]$, for $g \in G$. 

We remark that by very definition the graded module $\mathcal{H}om_{k}(M[g],N[g'])$ exactly coincides with $\mathcal{H}om_{k}(M,N)[g'-g]$ for $g, g' \in G$.  
In the same manner, the graded modules $M[g] \otimes N[g']$ and $(M \otimes N)[g+g']$, for $g, g' \in G$, are also exactly coincident. 
These ``identities'' are however misleading since they do not (in general) respect the Koszul sign rule, and -in some sense more fundamentally-
the mentioned phenomenon for the homomorphisms spaces is not in accordance with the axioms of category theory. 
We will regard such coincidences only as a consequence of the usual abuse of notation in the definitions of tensor product and morphisms spaces:
since we are interested in considering the Koszul sign rule, we should in fact force them to be noncoincident. 
There is however an identification (and in fact many of them, but in general different from the identity) between the corresponding previous graded modules, which is compatible 
with the Koszul sign rule, and that will be explained in the penultimate paragraph of this subsection.

Given $M$ and $N$ two graded modules over $k$, \emph{a morphism of graded modules of complete degree $g \in G$} is an element 
$f \in \mathcal{H}om_{k}(M,N)$ of complete degree $g$. 
We remark that the map $s_{M,g} : M \rightarrow M[g]$ given by the identity is a morphism of graded modules of degree $-g$, 
and we shall typically denote $s_{M,(1,0_{G'})}$ by $s_{M}$, or simply $s$, if $M$ is clear from the context. 
All along this article, if we do not indicate the complete degree of a morphism (between graded modules, or later on dg modules, etc), it means that it is of complete degree zero. 
Also, for $M$ and $N$ two graded modules over $k$, the (usual) tensor product $M \otimes_{k} N$ has the structure of graded module over $k$ with 
$(M \otimes_{k} N)^{g} = \oplus_{g' \in G} M^{g'} \otimes_{k} N^{g-g'}$. 
From now on, all unadorned tensor products $\otimes$ will mean $\otimes_{k}$. 
For $f : M \rightarrow N$ and $f' : M' \rightarrow N'$ two morphisms of graded modules over $k$ of complete degrees $g$ and $g'$, respectively, the map 
$f \otimes f' : M \otimes M' \rightarrow N \otimes N'$ given by $(f \otimes f')(m \otimes m') = (-1)^{\deg f' \deg m} f(m) \otimes f'(m')$, for $m \in M$ and $m' \in M'$ homogeneous,
 is a morphism of graded modules over $k$ of complete degree $g+g'$. 
Analogously, if $f : M \rightarrow N$ and $f' : N' \rightarrow M'$ are two morphisms of graded modules over $k$ of complete degrees $g$ and $g'$, respectively, the map 
$\mathcal{H}om_{k}(f,f') : \mathcal{H}om_{k} (N',N) \rightarrow \mathcal{H}om_{k} (M',M)$ given by $\phi \mapsto (-1)^{\deg f (\deg \phi + \deg f')} f' \circ \phi \circ f$, for $\phi$ homogeneous, 
is a morphism of complete degree $g+g'$. 
We shall also denote $\mathcal{H}om_{k}(f,N) = \mathcal{H}om_{k}(f,\mathrm{id}_{N})$ and $\mathcal{H}om_{k}(N',f') = \mathcal{H}om_{k}(\mathrm{id}_{N'},f')$. 
Furthermore, we will usually denote $\mathcal{H}om_{k}(f,k)$ by $f^{\#}$, which is of course has the same complete degree as the one of $f$. 
As for the case of the tensor product we shall usually omit the commutative ring $k$ in the notation of the homomorphism groups introduced before, and proceed to write $\mathcal{H}om$ 
instead of $\mathcal{H}om_{k}$. 
The canonical map $\iota_{M} : M \rightarrow (M^{\#})^{\#}$ defined as $\iota_{M}(m)(f) = (-1)^{\deg m \deg f} f(m)$, for $m \in M$ and $f \in M^{\#}$ homogeneous, is a morphism of graded modules. 
Given $M$ and $N$ two graded modules over $k$, we will occasionally consider the morphism $\iota_{M,N} : M^{\#} \otimes N^{\#} \rightarrow (M \otimes N)^{\#}$ of graded modules defined as $\iota_{M,N} (\phi \otimes \psi)(m \otimes n) = (-1)^{\deg \psi \deg m} \phi(m) \psi(n)$. 
We also have the \emph{flip} $\tau_{M,N} : M \otimes N \rightarrow N \otimes M$, which is the morphism of graded modules defined as 
$\tau_{M,N} (m \otimes n) = (-1)^{\deg m \deg n} n \otimes m$, for all $m \in M$ and $n \in N$ homogeneous elements. 

A \emph{differential graded module (or dg module)} over $k$ is a graded $k$-module $M = \oplus_{g \in G} M^{g}$ together with a homogeneous $k$-linear map 
$d_{M} : M \rightarrow M$ of degree $+1$ and zero weight, \textit{i.e.} $d_{M}(M^{g}) \subseteq M^{g+(1,0_{G'})}$ for all $g \in G$, such that it is a differential, \textit{i.e.} $d_{M}^{2} = 0$. 
The graded module structure on $k$ explained before can be extended to a dg module by defining the differential $d_{k} = 0$, and more generally, any graded $k$-module $M$ may be regarded as dg module 
with vanishing differential. 
For a dg module $M$ over $k$, the \emph{cohomology} $H^{\bullet}(M)$ of $M$, given by the quotient $\operatorname{Ker}(d_{M})/\operatorname{Im}(d_{M})$, is in fact a graded module over $k$. 
A dg module $M$ is called \emph{acyclic} if $H^{\bullet}(M)$ vanishes. 
If $M$ is a dg module and $g \in G$, $M[g]$ is the dg module over $k$ with the same graded module structure as before and differential 
$d_{M[g]} = (-1)^{i_{g}} d_{M}$. 
For $M$ and $N$ two dg modules over $k$, the tensor product $M \otimes N$ has the structure of dg module over $k$ with the same underlying graded structure as before 
and with differential $d_{M \otimes N} = d_{M} \otimes \mathrm{id}_{N} + \mathrm{id}_{M} \otimes d_{N}$. 
We endow the graded $k$-module $\mathcal{H}om(M,N)$ with the differential $d_{\mathcal{H}om(M,N)}(f) = d_{N} \circ f - (-1)^{\deg f} f \circ d_{M}$, so it becomes a dg module over $k$. 
In this case, for $M$ a dg module we will still denote by $M^{\#}$ the dg module $\mathcal{H}om(M,k)$. 
Note that $d_{M^{\#}} = - d_{M}^{\#}$. 

Given $M$ and $N$ two dg modules over $k$, $f : M \rightarrow N$ is a \emph{morphism of differential graded modules over $k$ of complete degree $g$} 
if it is a morphism between the underlying graded modules of complete degree $g$ 
and satisfies that $d_{N} \circ f = (-1)^{i_{g}} f \circ d_{M}$, \textit{i.e.} it is cocycle of complete degree $g$ of the dg module $\mathcal{H}om(M,N)$. 
We stress that, as before, if we do not specify the complete degree of a morphism, it will be assumed to be zero. 
Note that $s_{M,g} : M \rightarrow M[g]$ introduced previously is in fact a morphism of dg modules of complete degree $- g$.  
This is in fact tantamount to the definition of dg module structure over $k$ on $M[g]$. 
We stress that if $f : M \rightarrow N$ and $f' : M' \rightarrow N'$ are two morphisms of dg modules over $k$ of complete degree $g$ and $g'$, respectively, then 
$f \otimes f' : M \otimes M' \rightarrow N \otimes N'$ is a morphism of dg modules over $k$ of complete degree $g+g'$. 
Analogously, given $f : M \rightarrow N$ and $f' : N' \rightarrow M'$ two morphisms of dg modules over $k$ of complete degrees $g$ and $g'$, respectively, the map 
$\mathcal{H}om(f,f') : \mathcal{H}om (N',N) \rightarrow \mathcal{H}om (M',M)$ defined for graded modules is moreover a morphism of of dg modules over $k$ of complete degree $g+g'$. 
Note that given two dg $k$-modules $M$ and $N$, the maps $\iota_{M}$, $\iota_{M,N}$ and $\tau_{M,N}$ defined in the third paragraph of this section are further morphisms of dg modules over $k$. 

If $f : M \rightarrow N$ is a morphism of dg modules over $k$ of complete degree $g$, then, for each $g', g'' \in  G$, the may consider the map 
\[     \mathcal{H}om(s_{M,g'}^{-1},s_{N,g''})(f) : M[g'] \rightarrow N[g''],     \]
which will be denoted by $f_{[g']}^{[g'']}$. 
It is trivial to see that $f_{[g']}^{[g'']}$ a morphism of dg modules over $k$ of complete degree $g+g'-g''$, \textit{i.e.} 
\[     d_{N[g']} \circ f = (-1)^{i_{g+g'-g''}} f \circ d_{M[g']}.     \] 
If $f$ is a morphism of complete degree $0_{G}$, it is rather usual to allow the abuse of notation given by denoting the map $(-1)^{\deg g' \deg g''} f_{[g']}^{[g'']}$ also by $f$. 
We shall only follow this convention when we consider it is unambiguous. 
We also remark that (unlike the case for graded modules) the dg modules $\mathcal{H}om(M[g],N[g'])$ and $\mathcal{H}om(M,N)[g'-g]$, for $g, g' \in G$, are not the same, 
for the identity map between the underlying graded modules is not a morphism of dg modules over $k$. 
Indeed, the corresponding isomorphism of dg modules from $\mathcal{H}om(M,N)[g'-g]$ to $\mathcal{H}om(M[g],N[g'])$, which we denote by $H_{M,N,g,g'}$, is given by 
$s_{\mathcal{H}om(M,N),g'-g}(f) \mapsto \mathcal{H}om(s_{M,g}^{-1},s_{N,g'})(f)$, for $f \in \mathcal{H}om(M,N)$. 
The underlying map is thus the identity times a $(-1)^{(\deg f +i_{g'}) i_{g}}$ sign. 
In the same manner, the dg module structure on the tensor product and $M[g'] \otimes N[g']$ and $(M \otimes N)[g+g']$, for $g, g' \in G$, are not same. 
There is though a (not completely canonical) isomorphism of dg modules $M[g'] \otimes N[g'] \rightarrow (M \otimes N)[g+g']$ over $k$, denoted by $T_{M,N,g,g'}$, defined as 
$s_{M,g}(m) \otimes s_{N,g'}(n) \mapsto (-1)^{i_{g'} \deg m} s_{M \otimes N,g+g'}(m \otimes n)$, for $m \in M$ and $n \in N$ homogeneous elements.  

If $f : M \rightarrow N$ is a morphism of dg modules, the \emph{cone} $\operatorname{cone}(f)$ is the dg module whose underlying graded module is $M[1] \oplus N$ and whose differential is given by 
$d_{\operatorname{cone}(f)} (m,n) = (-d_{M}(m)+f(m),d_{N}(n))$.
Given a morphism of dg modules $f : M \rightarrow N$ of complete degree $g$, it directly induces a morphism of graded modules $H^{\bullet}(M) \rightarrow H^{\bullet}(N)$ of the same complete degree, which we will denote by $H^{\bullet}(f)$.  
It is clear that $H^{\bullet}(\mathrm{id}_{M}) = \mathrm{id}_{H^{\bullet}(M)}$ and that $H^{\bullet}(f \circ f') = H^{\bullet}(f) \circ H^{\bullet}(f')$, for any two composable morphisms $f$ and $f'$ of dg modules of complete degrees $g$ and $g'$, resp. 
Furthermore, a morphism of dg modules $f : M \rightarrow N$ of complete degree $0_{G}$ is said to be a \emph{quasi-isomorphism} if $H^{\bullet}(f)$ is an isomorphism of graded modules. 
It is well-known that $f$ is a quasi-isomorphism if and only if $\operatorname{cone}(f)$ is acyclic (see \cite{W}, Cor. 1.5.4). 

\subsection{Graded and differential graded algebras and coalgebras, and modules over the former} 
\label{subsec:int2}

A \emph{(nonunitary) graded algebra} over $k$ is just a (nonunitary) algebra over $k$ together with a decomposition of $k$-modules $A = \oplus_{g \in G} A^{g}$ 
satisfying that $A^{g} A^{g'} \subseteq A^{g+g'}$, for all $g, g' \in G$. 
We will also sometimes denote the product of $A$ by (the morphism of graded modules) $\mu_{A} : A \otimes A \rightarrow A$. 
A \emph{morphism of graded algebras} from a graded algebra $A$ to a graded algebra $B$ is a morphism of graded modules $f : A \rightarrow B$ such that $f(a a') = f(a) f(a')$ for all $a, a' \in A$.
A \emph{unitary graded algebra} over $k$ is a nonunitary one together with an element $1_{A} \in A^{0_{G}}$, called the \emph{unit} of $A$, satisfying the usual axiom $1_{A} a = a 1_{A} = a$ for all $a \in A$. 
We may also consider the unit of $A$ as a morphism of graded modules $\eta_{A} : k \rightarrow A$ which satisfies that $\mu_{A} \circ (\mathrm{id}_{A} \otimes \eta_{A})$ and 
$\mu_{A} \circ (\eta_{A} \otimes \mathrm{id}_{A})$ coincide with the canonical isomorphisms $A \otimes k \rightarrow A$ and $k \otimes A \rightarrow A$, resp.
Given two unitary graded algebras $A$ and $B$, a \emph{morphism of unitary graded algebras} is a morphism of the underlying nonunitary graded algebras $f : A \rightarrow B$ such that $f(1_{A}) = 1_{B}$. 
The \emph{opposite graded algebra} $A^{\mathrm{op}}$ of a nonunitary graded algebra $A$ is given by the same graded module over $k$ but with the product $a \cdot_{\mathrm{op}} b = (-1)^{\deg a \deg b} b a$, for all $a, b \in A$ homogeneous. 
In case $A$ is unitary, $A^{\mathrm{op}}$ also, with the same unit of $A$. 
If $A$ and $B$ are two nonunitary graded algebras, the graded module structure over $k$ of the tensor product $A \otimes B$ is also a graded algebra with the product 
$(a \otimes a') (b \otimes b') = (-1)^{\deg a' \deg b} ab \otimes a' b'$. 
If $A$ and $B$ are unitary with units $1_{A}$ and $1_{B}$, resp., then $A \otimes B$ is also unitary with unit $1_{A} \otimes 1_{B}$. 
We consider the graded algebra $A^{e} = A \otimes A^{\mathrm{op}}$, which is called the \emph{enveloping algebra} of $A$.  

We also have the dual definitions. 
A \emph{(noncounitary) graded coalgebra} over $k$ is a graded module $C = \oplus_{g \in G} C^{g}$ together with a morphism of graded modules $\Delta_{C} : C \rightarrow C \otimes C$ satisfying the coassociativity axiom $(\Delta_{C} \otimes \mathrm{id}_{C}) \circ \Delta_{C} = (\mathrm{id}_{C} \otimes \Delta_{C}) \circ \Delta_{C}$. 
We define $\Delta_{C}^{(n)} : C \rightarrow C^{\otimes n}$ as the composition $(\Delta_{C}^{(n-1)} \otimes \mathrm{id}_{C}) \circ \Delta_{C}$, for $n \in \NN_{\geq 3}$, and $\Delta_{C}^{(2)} = \Delta_{C}$. 
As usual, we may use the \emph{Sweedler notation} $\Delta_{C}^{(n)}(c) = c_{(1)} \otimes \dots \otimes c_{(n)}$ for the iterated coproduct of an element $c \in C$ (by the coassociativity axiom this notation is consistent). 
A \emph{morphism of graded coalgebras} from a graded coalgebra $C$ to a graded algebra $D$ is a morphism of graded modules $f : C \rightarrow D$ such that $\Delta_{D} \circ f = (f \otimes f) \circ \Delta_{C}$.
A graded coalgebra $C$ is called \emph{counitary} if there is a morphism of graded modules $\epsilon_{C} : C \rightarrow k$, called the \emph{counit} of $C$, satisfying that 
$(\epsilon_{C} \otimes \mathrm{id}_{C}) \circ \Delta_{C}$ and $(\mathrm{id}_{C} \otimes \epsilon_{C}) \circ \Delta_{C}$ coincide with the canonical isomorphisms $C \simeq k \otimes C$ and $C \simeq C \otimes k$, resp. 
Given two counitary graded coalgebras $C$ and $D$, a \emph{morphism of counitary graded coalgebras} is a morphism of the underlying noncounitary graded coalgebras $f : C \rightarrow D$ such that $\epsilon_{D} \circ f = \epsilon_{C}$. 
The \emph{coopposite graded coalgebra} $C^{\mathrm{coop}}$ of a noncounitary graded coalgebra $C$ is given by the same graded module over $k$ but with coproduct $\Delta_{C^{\mathrm{coop}}} = \tau_{C,C} \circ \Delta_{C}$. 
If $C$ is counitary, $C^{\mathrm{coop}}$ is also, with the same counit as the one of $C$. 
If $C$ and $D$ are two noncounitary graded coalgebras, the graded module structure over $k$ of the tensor product $C \otimes D$ is also a graded coalgebra with the coproduct 
$\Delta_{C \otimes D} = (\mathrm{id}_{C} \otimes \tau_{C,D} \otimes \mathrm{id}_{D}) \circ (\Delta_{C} \otimes \Delta_{D})$. 
If $C$ and $D$ are counitary with counits $\epsilon_{C}$ and $\epsilon_{D}$, resp., then $C \otimes D$ is also counitary with counit $\epsilon_{C} \otimes \epsilon_{D}$. 
The graded coalgebra $C \otimes C^{\mathrm{coop}}$ is called the \emph{enveloping coalgebra} of $C$, and is denoted by $C^{e}$. 

A left (resp., right) \emph{graded module} over a nonunitary graded algebra $A$ is just a left (resp., right) module over $A$ such that it is a graded module over $k$ for the action of $k$ 
given by restriction (\textit{i.e.} provided with a decomposition of $k$-modules of the form $M = \oplus_{g \in G} M^{g}$) satisfying that $A^{g'} M^{g} \subseteq M^{g'+g}$ 
(resp., $M^{g} A^{g'} \subseteq M^{g+g'}$), for all $g, g' \in G$. 
If $A$ is unitary, we say that $M$ is a left (resp., right) graded module if we further assume that $1_{A} m = m$ (resp., $m 1_{A} = m$) for all $m \in M$. 
A \emph{graded bimodule} over $A$ will be just a left graded module over the enveloping algebra $A^{e}$. 

In the rest of this subsection, unless further explanation is required, we shall usually refer to the term graded algebra (resp., graded coalgebra), differential graded algebra 
(resp., differential graded coalgebra), module over a graded algebra (resp., comodule over a graded coalgebra), etc. without explicitly indicating whether there is a unit (resp., counit) 
or not to indicate that the definitions and constructions apply to each possibility in the sense that either the adjective nonunitary should be applied to them altogether, or else the adjective unitary. 

If $M$ is a left (resp., right) graded module and $g \in G$, define $M[g]$ to be the left (resp., right) graded module over $A$ with the new action of $A$ given by $a \cdot m = (-1)^{i_{g} \deg a} a m$ 
(resp., with the same action) such that the complete degree shifts as $M[g]^{g'} = M^{g+g'}$. 
Note that, if $M$ is a left (resp., right) graded module over $A$, then it is also a right (resp., left) graded module over $A^{\mathrm{op}}$ with the same structure of graded module over $k$ and right 
(resp., left) action $m a = (-1)^{\deg a \deg m} a m$ (resp., $a m = (-1)^{\deg a \deg m} m a$) over $A^{\mathrm{op}}$. 
For any two left (resp., right) graded modules $M$ and $N$ over $A$, $\operatorname{hom}_{A}(M,N)$ is the space of $A$-linear maps of complete degree zero, and 
$\mathcal{H}om_{A}(M,N) = \oplus_{g \in G} \hom_{A}(M,N[g])$, which is obviously a graded $k$-module. 
Note that, if $M$ and $N$ are left (resp., right) graded modules over $A$, this implies that $\mathcal{H}om_{A}(M,N)$ is the subspace of $\mathcal{H}om(M,N)$ given by sums of 
homogeneous maps satisfying that $f (a m) = (-1)^{\deg f \deg a} a f(m)$ (resp., $f(m a) = f(m) a$), for $a \in A$ and $m \in M$ homogeneous elements. 
These latter are called \emph{morphisms of graded left (resp., right) $A$-modules} (of some complete degree). 
Notice that the graded left (resp., right) $A$-module structure on $M[g]$ is tantamount to requiring that the map $s_{M,g} : M \rightarrow M[g]$ is a morphism graded left (resp., right) $A$-modules. 
We may point out that there are similar definitions of graded comodules over graded coalgebras, to which the previous constructions also apply \textit{mutatis mutandi}. 
Since we will not need these, we do not provide such definitions, but we let the interested reader to elaborate on them.  

A \emph{nonunitary (resp., unitary) differential graded algebra (or dg algebra)} over $k$ is a nonunitary (resp., unitary) graded algebra over $k$ together with a homogeneous $k$-linear map 
$d_{A} : A \rightarrow A$ of complete degree $(1,0_{G'})$ satisfying the Leibniz identity, \textit{i.e.} $d_{A}(a b) = d_{A}(a) b + (-1)^{\deg a} a d_{A}(b)$, for all $a, b \in A$ homogeneous, and  $d_{A}^{2} = 0$ 
(resp., $d_{A}(1_{A}) = 0$ and $d_{A}^{2} = 0$).  
As in the case of unitary graded algebras, we may also consider the unit of $A$ as a morphism of dg modules $\eta_{A} : k \rightarrow A$ which satisfies the same axioms as before.
Note that the dg module structure on $k$ stated before is compatible with its structure of unitary algebra, turning $k$ into a unitary dg algebra.
The graded $k$-module given by the cohomology $H^{\bullet}(A)$ of $A$ is in fact a nonunitary (resp., unitary) graded algebra with the product induced by that of $A$ (resp., and the unit of $H^{\bullet}(A)$ is the cohomology class of the unit of $A$). 
Note that if $A$ is a dg algebra over $k$, then the opposite graded algebra together with the same differential $d_{A}$ is also a dg algebra over $k$. 
Analogously, for $A$ and $B$ two dg algebras, the dg module structure over $k$ of the tensor product $A \otimes B$ with the product (and unit if $A$ and $B$ are unitary) described above for graded algebras is also a dg algebra. 
In this case, the enveloping algebra $A^{e}$ of a dg algebra $A$ is also a dg algebra. 

A \emph{noncounitary (resp., counitary) differential graded coalgebra (or dg coalgebra)} over $k$ is a noncounitary (resp., counitary) graded coalgebra $C$ over $k$ provided with a morphism of graded $k$-modules $d_{C} : C \rightarrow C$ of complete degree $(1,0_{G'})$ satisfying that $\Delta_{C} \circ d_{C} = (\mathrm{id}_{C} \otimes d_{C} + d_{C} \otimes \mathrm{id}_{C}) \circ \Delta_{C}$, and $d_{C}^{2} = 0$ (resp., $\epsilon_{C} \circ d_{C} = 0$ and $d_{C}^{2} = 0$). 
Note also that the canonical isomorphism $k \rightarrow k \otimes k$ turns the dg module $k$ into a dg coalgebra, 
which is further counitary by setting $\epsilon_{k} = \mathrm{id}_{k}$. 
If $k$ is Von Neumann regular, the graded $k$-module given by the cohomology $H^{\bullet}(C)$ of $C$ has a coproduct (resp., and a counit) induced by that of $C$, by the K\"unneth formula, 
so it becomes a noncounitary (resp., counitary) graded coalgebra. 
If $C$ is a dg coalgebra over $k$, then the coopposite graded coalgebra together with the same differential $d_{C}$ is also a dg coalgebra over $k$. 
Analogously, for $C$ and $D$ two dg coalgebras, the dg module structure over $k$ of the tensor product $C \otimes D$ with the coproduct (and counit if $C$ and $D$ are counitary) described above for graded coalgebras is also a dg coalgebra. 
As for the case of algebras, the enveloping coalgebra $C^{e}$ of a dg coalgebra $C$ is also a dg coalgebra. 

A left (resp., right) \emph{differential graded module (or dg module)} over a dg algebra $A$ is a left (resp., right) graded $A$-module $M = \oplus_{g \in G} M^{g}$ 
such that it is also a dg module over $k$, 
for the action of $k$ coming from restriction (\textit{i.e.} together with a homogeneous $k$-linear map $d_{M} : M \rightarrow M$ of complete degree $(1,0_{G'})$, such that $d_{M}^{2} = 0$), 
which satisfies the Leibniz identity, \textit{i.e.} $d_{M}(a m) = d_{A}(a) m + (-1)^{\deg a} a d_{M}(m)$ 
(resp., $d_{M}(m a) = d_{M}(m) a + (-1)^{\deg m} m d_{A}(a)$), for all $a \in A$ and $m \in M$ homogeneous. 
If $M$ is a left (resp., right) dg module over $A$ and $g \in G$, $M[g]$ is the left (resp., right) dg module over $A$ with the same graded module structure over $A$ 
as defined previously and differential given by its structure of dg module over $k$, \textit{i.e.} $d_{M[g]} = (-1)^{i_{g}} d_{M}$. 
Note that for any two left (resp., right) dg modules $M$ and $N$ over a dg algebra $A$ the space $\mathcal{H}om_{A}(M,N)$ is obviously a dg $k$-module for 
$d_{\mathcal{H}om_{A}(M,N)}(f) = d_{N} \circ f - (-1)^{\deg f} f \circ d_{M}$. 
A \emph{morphism of differential graded modules over $A$ of complete degree $g$} is an element $f \in \mathcal{H}om_{A}(M,N)$ of degree $d$ satisfying that 
$d_{N} \circ f = (-1)^{i_{g}} f \circ d_{M}$, \textit{i.e.} it is cocycle of complete degree $g$ of the dg $k$-module $\mathcal{H}om_{A}(M,N)$. 
Note that $s_{M,g}$ is a morphism of dg modules over $A$, for any dg $A$-module $M$ and $g \in G$. 
As in the case of dg modules over $k$, if $f : M \rightarrow N$ is a morphism of dg modules over $A$ of complete degree $g$, then, 
for each $g', g'' \in \ZZ$, we may consider  
\[     f_{[g']}^{[g'']} : M[g'] \rightarrow N[g''],     \] 
which is a morphism of dg modules over $A$ of degree $g+g'-g''$.
Also, notice that, if $M$ is a left (resp., right) dg module over $A$, then it is also a right (resp., left) dg module over $A^{\mathrm{op}}$ 
with the same structure of dg module over $k$ and right (resp., left) action as in the case of graded modules over $A^{\mathrm{op}}$. 
Indeed, it is trivial to check that this satisfies the Leibniz identity, so it defines a structure of dg module over $A$. 
A \emph{differential graded bimodule (or dg bimodule)} over $A$ is defined as a left dg module over the enveloping algebra $A^{e}$. 
As before, we endow the graded $k$-module $\mathcal{H}om_{A}(M,N)$ with differential $d(f) = d_{N} \circ f - (-1)^{\deg f} f \circ d_{M}$, so it becomes a dg module over $k$.   
Again, we notice that there are similar definitions of differential graded comodules over dg coalgebras, to which the previous constructions also apply straightforward, 
but we will not give them for they are not going to be required.  

Let $M$ be a dg module over a dg algebra $A$. 
It is called \emph{free} if it is isomorphic to a direct sum of dg modules over $A$ of the form $A[g_{i}]$, for a family $\{ g_{i} : i \in I \}$ of elements of $G$, 
where $I$ is a set of indices. 
We say that $M$ is \emph{semi-free} if there is an increasing filtration $\{ M_{i} \}_{i \in \NN_{0}}$ of dg submodules of $M$ over $A$ such that that $M_{0} = 0$ (\textit{i.e.} the filtration is \emph{Hausdorff}), $\cup_{i \in \NN_{0}} M_{i} = M$ (\textit{i.e.} the filtration is \emph{exhaustive}) and 
$M_{i+1}/M_{i}$ is a free dg module over $A$ for all $i \in \NN_{0}$ (see \cite{AH}, Subsection 1.11, (4)). 
Equivalently, $M$ is semi-free if there exists a set $\mathcal{B} \subseteq M$ of homogeneous elements which gives a basis of the underlying graded module of $M$ over the underlying graded algebra of $A$ with the following property. 
For any $S \subseteq \mathcal{B}$, let $\delta(S) \subseteq \mathcal{B}$ be the smallest subset among all of the subsets $T$ of $\mathcal{B}$ such that $d(S)$ is included in the 
$A$-linear span of $T$. 
Then, the previously mentioned property is that for every $b \in \mathcal{B}$, there is $n \in \NN$ such that $\delta^{n}(\{ b \}) = \emptyset$ (see \cite{AFH}, Prop. 8.2.3). 
It is a very simple exercise to prove that if $M$ is provided with an increasing Hausdorff and exhaustive filtration $\{ M_{i} \}_{i \in \NN_{0}}$ of dg submodules of $M$ over $A$ such that 
$M_{i+1}/M_{i}$ is a semi-free dg module over $A$ for all $i \in \NN_{0}$, then $M$ is also semi-free (see \cite{AFH}, Cor. 8.2.4). 

We say that a dg module $M$ over a dg algebra $A$ is \emph{homotopically projective} if given any acyclic dg module $N$ over $A$ (\textit{i.e.} $H^{\bullet}(N) = 0$) 
and every morphism of dg modules $f : M \rightarrow N$, there is $h \in \operatorname{hom}_{A}(M,N[-1])$ (called a \emph{homotopy} between $f$ and $0$) such that $d(h) = f$. 
As noticed by \cite{AFH}, any semi-free dg module is homotopically projective. 
This follows directly from the easy fact that any homotopy between $f|_{M_{i}}$ and the zero map can be extended to a homotopy between $f|_{M_{i+1}}$ and the corresponding zero map. 
Indeed, this can be easily proved by diagram chasing arguments applied to the following exact sequence of dg modules over $k$ provided with morphisms of complete degree zero
\[     \mathcal{H}om_{A}(M_{i+1}/M_{i},N) \rightarrow \mathcal{H}om_{A}(M_{i+1},N) \rightarrow \mathcal{H}om_{A}(M_{i},N) \rightarrow 0.     \]

A \emph{semi-free resolution} of $M$ is a dg module $F$ over $A$ together with a morphism of dg $A$-modules $f : F \rightarrow M$ of complete degree zero 
such that it is a quasi-isomorphism. 
As noted in \cite{AH}, Subsection 1.11, (6), a semi-free resolution always exists, and the morphism $f$ can be even choosen to be surjective (see \cite{AFH}, Thm. 8.3.2). 
The construction of the pair $(F,f)$ is given by the direct limit of a recursive contruction of pairs $(F_{i},f_{i})_{i \in \NN_{0}}$ satisfying that 
$\{ F_{i} \}_{i \in \NN_{0}}$ is an increasing sequence of dg $A$-modules with $F_{0} = 0$ and $F_{i+1}/F_{i}$ free, $f_{i} : F_{i} \rightarrow M$ is a morphism of dg $A$-modules 
of complete degree zero and $f_{i+1}|_{M_{i}} = f_{i}$ for all $i \in \NN_{0}$. 
The inductive step is given as follows. 
Suppose we have constructed $(F_{j},f_{j})_{j = 0, \dots, i}$ as before, for some $i \in \NN_{0}$, then one takes a free dg $A$-module $P$ together with a morphism $P \rightarrow \operatorname{cone}(f_{i})[-1]$ which induces a surjective morphism between cohomology groups (this can be easily done by taking $P$ the free dg $A$-module generated 
by a set of cocycles, whose cohomology class generate the cohomology of the cone $\operatorname{cone}(f_{i})[-1]$). 
Set 
\[     F_{i+1} = \operatorname{cone}((p_{1})^{[-1]}_{[-1]} \circ \pi),     \] 
where $p_{1} : \operatorname{cone}(f_{i}) \rightarrow F_{i}[1]$ is the morphism of dg modules given by the canonical projection, and 
define $f_{i+1} : F_{i+1} \rightarrow M$ as 
\[     f_{i+1}(p,e) = (p_{2} \circ \pi^{[1]}_{[1]}) (p) + f_{i}(e),      \]
where $(p,e) \in P[1] \oplus F_{i}$, $p_{2} : \operatorname{cone}(f_{i}) \rightarrow M$ is the 
morphism of graded modules given by the canonical projection (it is not a morphism of dg modules!). 
It is easy to check that $f_{i+1}$ is a morphism of dg $A$-modules, there is a canonical inclusion of dg $A$-modules $F_{i} \subseteq F_{i+1}$, $f_{i+1}|_{F_{i}} = f_{i}$ 
and $F_{i+1}/F_{i} \simeq P$ is a free dg $A$-module. 
It is clear that $F$ is semi-free and $f$ is surjective. 
Let us see that it is a quasi-isomorphism. 
It is easy to see that the inclusion $F_{i} \subseteq F_{i+1}$ of dg $A$-modules induce in turn an inclusion $\operatorname{cone}(f_{i}) \rightarrow \operatorname{cone}(f_{i+1})$ of dg $A$-modules, 
thanks to the property $f_{i+1}|_{F_{i}} = f_{i}$. 
We thus obtain an increasing Hausdorff and exhaustive filtration $\{ \operatorname{cone}(f_{i}) \}_{i \in \NN_{0}}$ of dg $A$-modules of $\operatorname{cone}(f)$. 
Since filtered colimits are exact (see \cite{W}, Thm. 2.6.15), they commute with taking cohomology, so the cohomology of $\operatorname{cone}(f)$ is the direct limit of the system given 
by $\{ H^{\bullet}(\operatorname{cone}(f_{i})) \}_{i \in \NN_{0}}$ together with the cohomology classes of the maps $\operatorname{cone}(f_{i}) \rightarrow \operatorname{cone}(f_{i+1})$, for $i \in \NN_{0}$. 
The latter morphisms $H^{\bullet}(\operatorname{cone}(f_{i})) \rightarrow H^{\bullet}(\operatorname{cone}(f_{i+1}))$ vanish by construction, which implies thus that $H^{\bullet}(\operatorname{cone}(f)) = 0$, which in turn implies that $f$ is a quasi-isomorphism. 

\section{Hochschild homology and cohomology of dg algebras} 
\label{sec:hoch}

\subsection{The bar resolution and Hochschild (co)homology of dg algebras} 
\label{subsec:barrhoch}

We recall that, for $A$ and $B$ two unitary dg algebras over $k$, the \emph{free product} $A *_{k} B$ of $A$ and $B$ is given as a unitary graded algebra over $k$ by 
\[     T_{k}(A \oplus B)/\cl{ 1_{A} - 1_{B}, a \otimes a' - a a', b \otimes b' - b b' : \text{ for all } a, a' \in A \text{ and } b, b' \in B},     \] 
where $T_{k}(V)$ is the tensor algebra on a graded module $V$ over $k$. 
Note that the canonical inclusions $i_{A} : A \rightarrow A *_{k} B$ and $i_{B} : B \rightarrow A *_{k} B$ are morphisms of graded $k$-algebras. 
Then $A *_{k} B$ has a natural structure of graded $A$-bimodule via $i_{A}$ and of graded $B$-bimodule via $i_{B}$. 
The differential $d_{A}$ of $A$ can be extended as the unique derivation $d_{A *_{k} B | A}$ of $A *_{k} B$ satisfying that $d_{A *_{k} B | A} \circ i_{A} = d_{A}$ 
and $d_{A *_{k} B | A} \circ i_{B} = 0$. 
The same applies to the differential $d_{B}$, providing a derivation $d_{A *_{k} B | B}$ on $A *_{k} B$. 
Note that $d_{A *_{k} B | A} d_{A *_{k} B | B} = - d_{A *_{k} B | B} d_{A *_{k} B | A}$. 
The differential $d_{A *_{k} B}$ of $A *_{k} B$ is just the derivation $d_{A *_{k} B | A} + d_{A *_{k} B | B}$. 
Hence, we see that $A *_{k} B$ has in fact a natural structure of dg $A$-bimodule via $i_{A}$ and of dg $B$-bimodule via $i_{B}$. 
By abuse of notation, we usually write $d_{A}$ instead of $d_{A *_{k} B | A}$ and $d_{B}$ instead of $d_{A *_{k} B | B}$. 
Note that $A *_{k} B$ is just the coproduct of $A$ and $B$ in the category of the dg algebras over $k$.

Let $k[\epsilon]$ be the differential graded algebra whose underlying $k$-module is the usual polynomial algebra on the indeterminate $\epsilon$, 
where the degree of $\epsilon$ is $-1$ and the weight is zero, provided with the differential of complete degree $(1,0_{G'})$ given by the derivation $\partial/\partial\epsilon$, 
\textit{i.e.} the unique derivation satisfying that $\partial/\partial\epsilon (\epsilon) = 1$. 
Consider the differential graded algebra given by the free product $A *_{k} k[\epsilon]$, and the differential induced by $d_{A}$ and $\partial/\partial\epsilon$. 
Following V. Drinfeld (\textit{cf.} \cite{Gin}, Subsection 4.3), the \emph{augmented (nonreduced or unnormalized) bar complex} of $A$ is just another ``presentation'' 
of the differential graded algebra $A *_{k} k[\epsilon]$ with the differential given by $d_{A} + \partial/\partial \epsilon$. 
We will explain what this means. 
Consider the graded $A$-bimodule given by $\operatorname{Bar}(A) = \oplus_{n \in \NN_{0}} (A \otimes A[1]^{\otimes n} \otimes A)$. 
If $n \in \NN$ we will typically denote an element $a_{0} \otimes s(a_{1}) \otimes \dots \otimes s(a_{n}) \otimes a_{n+1} \in A \otimes A[1]^{\otimes n} \otimes A$ in the form $a_{0} [a_{1} | \dots | a_{n}] a_{n+1}$, where $a_{0}, \dots, a_{n+1} \in A$ and $s : A \rightarrow A[1]$ is the canonical morphism of degree $-1$ recalled in the third paragraph of Subsection \ref{subsec:int}. 
In the same manner, we may usually denote $a_{0} \otimes a_{1}$ by $a_{0} [] a_{1}$. 
There is a canonical identification (as graded $A$-bimodules, so the morphism is of complete degree zero) of $\operatorname{Bar}(A)[1]$ inside $A *_{k} k[\epsilon]$ given by 
$s_{\operatorname{Bar}(A)}(a_{0} [a_{1} | \dots | a_{n}] a_{n+1}) \mapsto (-1)^{\deg a_{0} + \dots + \deg a_{n} + n} a_{0} \epsilon \dots \epsilon a_{n+1}$, for $n \geq 0$, where we have replaced each occurrence of the tensor on the left member by $\epsilon$ and added a sign. 
Under this identification $\operatorname{Bar}(A)$ gets a differential $b'$ of complete degree $(1,0_{G'})$, such that $\operatorname{Bar}(A)$ is a differential graded $A$-bimodule (if we forget about the map 
$\partial/\partial \epsilon$ applied to elements $a_{0}\epsilon a_{1}$). 
Moreover, under the previous identifications, and seeing $A$ inside $A *_{k} k[\epsilon]$ via $i_{A}$, the differential $d_{A *_{k} k[\epsilon]}$ 
of $A *_{k} k[\epsilon]$ induces the map of graded $A$-bimodules (of complete degree zero) from $\operatorname{Bar}(A)$ to $A$ whose restriction to $A \otimes A$ 
is given by the product of $A$, and the restriction to $A \otimes A[1]^{\otimes n} \otimes A$, for $n \neq 0$, vanishes. 
In fact, it is clear that using the previous maps $A *_{k} k[\epsilon]$ is identified (as a graded $A$-bimodule) with the cone of the morphism of dg $A$-bimodules 
$\operatorname{Bar}(A) \rightarrow A$ of complete degree $0_{G}$. 
That this map $\operatorname{Bar}(A) \rightarrow A$ is a quasi-isomorphism is tantamount to the fact that its cone is acyclic, or, under the previous identification, 
that $A *_{k} k[\epsilon]$ is acyclic. 
This last statement follows easily from the fact that the cohomology $H^{\bullet}(A *_{k} k[\epsilon])$ is a unitary algebra whose unit vanishes, since $1_{A *_{k} k[\epsilon]} = d_{A *_{k} k[\epsilon]}(\epsilon)$. 

The \emph{augmented reduced (or normalized) bar complex} is just what becomes identified when we consider $(A *_{k} k[\epsilon])/\cl{\epsilon^{2}}$ instead of $A *_{k} k[\epsilon]$. 
In this case, the underlying graded $A$-bimodule will be given by $\overline{\operatorname{Bar}}(A) = \oplus_{n \in \NN_{0}} A \otimes \bar{A}[1]^{\otimes n} \otimes A$, where $\bar{A} = A/k.1_{A}$ 
(as graded $k$-modules). 
We will still denote an element $a_{0} \otimes s(\bar{a}_{1}) \otimes \dots \otimes s(\bar{a}_{n}) \otimes a_{n+1} \in A \otimes \bar{A}[1]^{\otimes n} \otimes A$ in the form 
$a_{0} [a_{1} | \dots | a_{n}] a_{n+1}$, where $a_{0}, \dots, a_{n+1} \in A$, thus omitting the bars for simplicity. 
This will mean in particular that an element $a_{0} [a_{1} | \dots | a_{n}] a_{n+1}$ in the reduced bar complex of $A$ vanishes if there is some index $i \in \{ 1, \dots, n \}$ such that $a_{i}$ 
is scalar multiple of $1_{A}$. 
The previous isomorphisms now induce an identification of $\overline{\operatorname{Bar}}(A)[1]$ inside $(A *_{k} k[\epsilon])/\cl{\epsilon^{2}}$, and by the same arguments it becomes a dg 
$A$-bimodule with differential $\bar{b}'$, which is a resolution of $A$, also by the map whose restriction to $A \otimes A$ is given by the product of $A$, and whose restriction to $A \otimes \bar{A}[1]^{\otimes n} \otimes A$ vanishes for $n \neq 0$. 
As for the case of the bar complex, the dg $A$-bimodule $(A *_{k} k[\epsilon])/\cl{\epsilon^{2}}$ is identified by the previous map with the cone of the morphism of dg $A$-bimodules $\overline{\operatorname{Bar}}(A) \rightarrow A$ of complete degree $0_{G}$. 
This last map is proved to be a quasi-isomorphism using the same arguments as in the previous paragraph. 
This in turn implies that the morphism of differential graded $A$-bimodules $\operatorname{Bar}(A) \rightarrow \overline{\operatorname{Bar}}(A)$ given by taking quotients is a quasi-isomorphism, for the latter is induced (using the previous identifications) by the canonical quotient morphism $A *_{k} k[\epsilon] \rightarrow (A *_{k} k[\epsilon])/\cl{\epsilon^{2}}$ of differential graded algebras. 
We remark that the previously defined differential $\bar{b}'$ coincides with the differential $d_{0} + d_{1}$ defined in \cite{FTVP}, Subsection 2.2, of the differential graded $A$-bimodule 
$\bar{B}(A;A;A)$ considered there (which, as a graded $A$-bimodule, coincides with $\overline{\operatorname{Bar}}(A)$). 
More explicitly, the previously referred maps are given by
\begin{equation*}
\begin{split}
     d_{0}(a_{0} [a_{1} | \dots | a_{n}] a_{n+1}) =& d_{A}(a_{0}) [a_{1} | \dots | a_{n}] a_{n+1} 
     \\ 
     &- \sum_{i=1}^{n} (-1)^{\epsilon_{i}} a_{0}[a_{1} | \dots | d_{A}(a_{i}) | \dots | a_{n}] a_{n+1} 
     \\  
     &+ (-1)^{\epsilon_{n+1}} a_{0} [a_{1} | \dots | a_{n}] d_{A}(a_{n+1}),     
\end{split}
\end{equation*}
and 
\begin{equation*}
\begin{split}
     d_{1}(a_{0} [a_{1} | \dots | a_{n}] a_{n+1}) =& (-1)^{\deg a_{0}} a_{0} a_{1} [a_{2} | \dots | a_{n}] a_{n+1} 
     \\ 
     &+ \sum_{i=2}^{n} (-1)^{\epsilon_{i}} a_{0}[a_{1} | \dots | a_{i-1} a_{i} | \dots | a_{n}] a_{n+1} 
     \\ 
     &- (-1)^{\epsilon_{n}} a_{0} [a_{1} | \dots | a_{n-1}] a_{n} a_{n+1},     
\end{split}
\end{equation*}
where $\epsilon_{i} = \deg a_{0} + (\sum_{j=1}^{i-1} \deg a_{j}) - i +1$, and where it is assumed that the expression $a_{0} [a_{1} | \dots | a_{n}] a_{n+1}$ vanishes if $a_{i} = \lambda 1_{A}$, 
for some $\lambda \in k$ and $i \in \{ 1, \dots, n \}$. 
The same expression of the differential hold for the nonreduced bar complex. 

The following result justifies the relevance of the bar resolution. 
It was proved for augmented dg algebras and the reduced bar resolution in \cite{FHT}, Lemma 4.3,
though exactly the same proof applies verbatim to this more general case. 
We will provide it for completeness.
\begin{lemma}
Let $A$ be a unitary differential graded algebra such that the underlying dg $k$-module of $A$ is semi-free. 
Then the previously considered morphism of differential graded $A$-bimodules $\operatorname{Bar}(A) \rightarrow A$ (or $\overline{\operatorname{Bar}}(A) \rightarrow A$) is in fact a semi-free resolution of $A$.  
\end{lemma}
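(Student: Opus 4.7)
The plan is to note that the quasi-isomorphism statement has already been fully established in the discussion preceding the lemma (via the identification with $A *_k k[\epsilon]$ and $(A *_k k[\epsilon])/\langle \epsilon^2\rangle$, which are acyclic because their cohomology is a unitary algebra with zero unit). So what remains is to exhibit $\operatorname{Bar}(A)$, respectively $\overline{\operatorname{Bar}}(A)$, as a semi-free dg module over the enveloping algebra $A^e$.

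For this, my approach is to use the tensor-length filtration. Define
\[
     F_p \operatorname{Bar}(A) = \bigoplus_{0 \leq n \leq p} A \otimes A[1]^{\otimes n} \otimes A,
\]
which is an increasing, Hausdorff and exhaustive filtration by dg $A^e$-submodules: the ``internal'' part $d_0$ of the differential preserves tensor length, while the ``bar'' part $d_1$ strictly decreases it. Consequently the associated graded piece $F_p/F_{p-1}$ is isomorphic, as a dg $A^e$-module, to $A^e \otimes_k A[1]^{\otimes p}$, where the differential on the right-hand factor is only the one induced by $d_A$ on $A[1]^{\otimes p}$ (all occurrences of $d_1$ land in $F_{p-1}$ after passing to the quotient).

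Next I would invoke the hypothesis that $A$ is semi-free over $k$: since shifts and finite tensor products of semi-free dg $k$-modules are again semi-free, each $A[1]^{\otimes p}$ is a semi-free dg $k$-module. For any semi-free dg $k$-module $M$ with semi-free basis $\mathcal{B}$, the dg $A^e$-module $A^e \otimes_k M$ is semi-free, with semi-free basis $\{1 \otimes b : b \in \mathcal{B}\}$ (the partial order $\delta$ on the basis is lifted directly from the one on $\mathcal{B}$). Hence every subquotient $F_p/F_{p-1}$ is a semi-free dg $A^e$-module, and by the filtration fact recalled in Subsection~\ref{subsec:int2} (the analogue of \cite{AFH}, Cor.~8.2.4, applied to $A^e$), $\operatorname{Bar}(A)$ itself is semi-free over $A^e$.

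The case of $\overline{\operatorname{Bar}}(A)$ is handled by exactly the same argument, replacing $A[1]^{\otimes p}$ by $\bar{A}[1]^{\otimes p}$; the subquotients become $A^e \otimes_k \bar{A}[1]^{\otimes p}$. The only subtle point, and the one I expect to require the most care, is to check that $\bar A = A/k\cdot 1_A$ is itself semi-free over $k$: this follows by choosing a semi-free basis of $A$ over $k$ containing $1_A$ (which we may always arrange, since $1_A$ is a cocycle and any semi-free basis can be enlarged and rewritten to include a given cocycle), so that its image in $\bar A$ is a semi-free basis. Everything else is pure bookkeeping with the filtration.
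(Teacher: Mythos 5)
Your proof follows essentially the same route as the paper's: the quasi-isomorphism is quoted from the preceding discussion, and semi-freeness over $A^{e}$ is deduced from the tensor-length filtration (the paper's $\{T_{n}\}_{n \in \NN_{0}}$), whose subquotients carry only the $d_{0}$-differential and are semi-free because $A$ (hence each $A[1]^{\otimes p}$) is semi-free over $k$. The one place you go beyond the paper, which merely calls the reduced case ``analogous,'' is the check that $\bar{A}=A/k.1_{A}$ is semi-free over $k$; that extra care is reasonable, though your basis-rewriting argument tacitly requires $k.1_{A}$ to split off as a direct summand of a semi-free basis, which is automatic when $k$ is a field but deserves a word for general $k$.
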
 
\begin{proof}
The fact that $\operatorname{Bar}(A) \rightarrow A$ (or $\overline{\operatorname{Bar}}(A) \rightarrow A$) is a quasi-isomorphism of dg $A$-bimodules was already shown at the end of the second 
and the beginning of the third paragraphs of this section. 
It remains to prove that $\operatorname{Bar}(A)$ (or $\overline{\operatorname{Bar}}(A)$) is a semi-free dg bimodule over $A$. 
Let us prove it for the nonreduced bar resolution, the case of the reduced one being analogous. 
Since $A$ is a semi-free dg $k$-module, the same applies to the dg $k$-module $A[1]$, and to the tensor products $A[1]^{\otimes n}$. 
This in turn implies that the dg $A$-bimodule $A \otimes A[1]^{\otimes n} \otimes A$ (provided only with the differential induced by $d_{A}$, \textit{i.e.} $d_{0}$ given before) is semi-free. 
The proof ends by using the last property of semi-free modules given in the antepenultimate paragraph of the previous subsection by noting that the previous dg $A$-bimodule $A \otimes A[1]^{\otimes n} \otimes A$ is isomorphic to the quotient $T_{n}/T_{n-1}$, for the increasing Hausdorff and exhaustive filtration $\{ T_{n} \}_{n \in \NN_{0}}$ of $\operatorname{Bar}(A)$ 
given by the dg $A$-bimodules whose underlying graded modules are $T_{n} = \oplus_{i=0}^{n} A \otimes A[1]^{\otimes i} \otimes A$ for all $n \in \NN_{0}$.  
\end{proof}

Let now $M$ be a dg bimodule over $A$. 
The \emph{Hochschild homology} $H_{\bullet}(A,M)$ of $A$ with coefficients on $M$ is just the homology of the complex $M \otimes_{A^{e}} \operatorname{Bar}(A)$, or equivalently, $M \otimes_{A^{e}} \overline{\operatorname{Bar}}(A)$, with differential 
$d_{M} \otimes_{A^{e}} \mathrm{id}_{B_{\bullet}(A)} +  \mathrm{id}_{M} \otimes_{A^{e}} b'$, or $d_{M} \otimes_{A^{e}} \mathrm{id}_{\bar{B}_{\bullet}(A)} + \mathrm{id}_{M} \otimes_{A^{e}} \bar{b}'$, respectively. 
We recall the canonical identification $\Phi_{A,M} : M \otimes_{A^{e}} \overline{\operatorname{Bar}}(A) \rightarrow M \otimes T(s(\bar{A}))$ of the form $m \otimes_{A^{e}} a_{0} [a_{1} | \dots | a_{n}] a_{n+1} \mapsto (-1)^{\deg a_{n+1} (\deg m + (\sum_{i=0}^{n} \deg a_{i}) + n)} a_{n+1} m a_{0} \otimes [a_{1} | \dots | a_{n}]$. 
By means of the former isomorphism we induce a differential of the form $D_{0}' + D_{1}'$ on $M \otimes T(s(\bar{A}))$ given by 
\begin{equation}
\label{eq:dif0homo}
\begin{split}
     D_{0}'(m \otimes [a_{1} | \dots | a_{n}]) = & d_{M}(m) \otimes [a_{1} | \dots | a_{n}]  
     \\ 
     &- \sum_{i=1}^{n} (-1)^{\tilde{\epsilon}_{i}} m \otimes [a_{1} | \dots | d_{A}(a_{i}) | \dots | a_{n}]
     \\ 
     &+ \sum_{i=2}^{n} (-1)^{\tilde{\epsilon}_{i}} m \otimes [a_{1} | \dots | a_{i-1} a_{i} | \dots | a_{n}],       
\end{split}
\end{equation}
and 
\begin{equation}
\label{eq:dif1homo}
\begin{split}
     D_{1}'(m \otimes [a_{1} | \dots | a_{n}]) =& (-1)^{\deg m} m a_{1} \otimes [a_{2} | \dots | a_{n}]  
     \\ 
     &- (-1)^{\tilde{\epsilon}_{n} (\deg a_{n} + 1)} a_{n} m \otimes [a_{1} | \dots | a_{n-1}], 
\end{split}
\end{equation}
where $\tilde{\epsilon}_{i} = \deg m + (\sum_{j=1}^{i-1} \deg a_{j}) - i +1$, and as usual the expression $[a_{1} | \dots | a_{n}]$ vanishes if $a_{i} = \lambda 1_{A}$, 
for some $\lambda \in k$ and $i \in \{ 1, \dots, n \}$. 
The same expression of the differential hold for the nonreduced bar complex. 
Note that our expression of differential coincides with the corresponding one of \cite{T}, Subsection 2.1, if one understands in their equation before (10), and following their notation, 
that the sum is indexed over $i = 0, \dots, p$, and $\sum_{k < i} (|a_{k}| + 1) + 1$ (\textit{i.e.} $\sum_{k=0}^{i-1} (|a_{k}| + 1) + 1$) is in fact $|a_{0}| + (|a_{1} + 1|) + \dots + (|a_{i-1}| + 1)$, 
so it vanishes if $i = 0$, by the principle of summing over the empty set (here $| \phantom{x} |$ is our cohomological degree). 
Note that the expressions written before to interpret $\sum_{k < i} (|a_{k}| + 1) + 1$ coincide for $i \geq 1$. 
Accordingly, if we regard the convention of \cite{NT}, Section 2, (2.2), and also following their notation, $\eta_{j}$ should be understood as $((|a_{0}|-1) + |a_{1}| + \dots + |a_{i-1}|) + 1$ 
(following the interpretation consistent with their identity (2.1) and not with their definition before (1.1)) and not as $\sum_{k=0}^{j-1} (|a_{k}|)$ (here $| \phantom{x} |$ is our cohomological degree plus one). 
The difference between the two expressions is only apparent when $j = 0$, for the latter gives $0$, being a sum indexed over an empty set. 
With this interpretation, our differential would just be $b - \delta$ instead of $b + \delta$ in the notation of that article. 

Analogously, the \emph{Hochschild cohomology} $H^{\bullet}(A,M)$ of $A$ with coefficients on $M$ is given by the cohomology of the complex $\mathcal{H}om_{A^{e}} (\operatorname{Bar}(A),M)$, or equivalently, 
$\mathcal{H}om_{A^{e}} (\overline{\operatorname{Bar}}(A),M)$, with differential $f \mapsto d_{M} \circ f - (-1)^{deg f} f \circ b'$, or $f \mapsto d_{M} \circ f - (-1)^{deg f} f \circ \bar{b}'$, respectively. 
More explicitly, using the canonical identification $\Phi^{A,M} : \mathcal{H}om_{A^{e}} (\overline{\operatorname{Bar}}(A),M) \rightarrow \mathcal{H}om (T(s(\bar{A})),M)$ given by  
$\Phi^{A,M}(f)([a_{1} | \dots | a_{n}]) = f(1_{A} [a_{1} | \dots | a_{n}] 1_{A})$, we get that the latter complex induces a differential given by a sum $D_{0} + D_{1}$, whose value at $f \in \mathcal{H}om (T(s(\bar{A})),M)$ is  
\begin{equation}
\label{eq:dif0cohomo}
\begin{split}
     D_{0}(f)([a_{1} | \dots | a_{n}]) =& d_{M}(f([a_{1} | \dots | a_{n}])) 
     \\ 
     &+ \sum_{i=1}^{n} (-1)^{\bar{\epsilon}_{i}} f([a_{1} | \dots | d_{A}(a_{i}) | \dots | a_{n}]) 
     \\  
     &- \sum_{i=2}^{n} (-1)^{\bar{\epsilon}_{i}} f([a_{1} | \dots | a_{i-1} a_{i} | \dots | a_{n}]),     
\end{split}
\end{equation}
and 
\begin{equation}
\label{eq:dif1cohomo}
\begin{split}
     D_{1}(f)([a_{1} | \dots | a_{n}]) =& - (-1)^{\deg a_{1} \deg f - \deg f} a_{1} f([a_{2} | \dots | a_{n}])  
     \\ 
     &+ (-1)^{\bar{\epsilon}_{n}} f([a_{1} | \dots | a_{n-1}]) a_{n}, 
\end{split}
\end{equation}
where $\bar{\epsilon}_{i} = \deg f + (\sum_{j=1}^{i-1} \deg a_{j}) - i +1$, and as before it is supposed that the expression $[a_{1} | \dots | a_{n}]$ vanishes if $a_{i} = \lambda 1_{A}$, 
for some $\lambda \in k$ and $i \in \{ 1, \dots, n \}$. 
The same expression of the differential holds for the nonreduced bar complex. 
Notice that our expression of differential does coincide with the corresponding one of \cite{NT}, Section 1, Definition 1.1, or \cite{T}, Subsection 2.2, p. 80. 

\subsection{The bar construction and Hochschild (co)homology of augmented dg algebras} 
\label{subsec:barchoch}

We are interested in the Hochschild (co)homology $H^{\bullet}(A,A)$ of $A$ with coefficients on the same dg algebra $A$ (also denoted by $HH^{\bullet}(A)$) 
under the slightly stronger assumption of $A$ being an augmented dg algebra. 
We recall that a unitary dg algebra $A$ is called \emph{augmented} if there is a morphism $\epsilon_{A} : A \rightarrow k$ of unitary dg algebras. 
In this case, $I_{A} = \operatorname{Ker}(\epsilon_{A})$ is called the \emph{augmentation ideal} of $A$. 
A morphism of augmented dg algebras $f : A \rightarrow A'$ is a morphism of unitary dg algebras such that $\epsilon_{A'} \circ f = \epsilon_{A}$. 
Analogously, a counitary dg coalgebra $C$ is said to be \emph{coaugmented} if there exists a morphism of counitary dg coalgebras $\eta_{C} : k \rightarrow C$. 
We will usually denote the coaugmentation cokernel $C/\operatorname{Im}(\eta_{C})$ of $C$ by $J_{C}$, which is a (nonunitary) dg coalgebra. 
As in the case of augmented dg algebras, $J_{C}$ is canonically identified with $\operatorname{Ker}(\epsilon_{C})$, and under that identification the coproduct of $\operatorname{Ker}(\epsilon_{C})$ 
is given by $\Delta_{C}(c) - 1_{C} \otimes c - c \otimes 1_{C}$, for $c \in \operatorname{Ker}(\epsilon_{C})$, where $1_{C} = \eta_{C}(1_{k})$. 
We shall denote such an element by $\Delta_{\operatorname{Ker}(\epsilon_{C})}(c)$, or $c_{(1)}^{-} \otimes c_{(2)}^{-}$.   
A morphism of coaugmented dg coalgebras $f : C' \rightarrow C$ is a morphism of counitary dg coalgebras such that $f \circ \eta_{C'} = \eta_{C}$. 

\subsubsection{Twists of (augmented) dg algebras}
\label{subsubsec:perturb}

One natural question is whether we may ``perturb'' the differential $d_{A}$ of an augmented dg algebra $A$ such that the resulting dg $k$-module is still an augmented dg algebra for the previous product, unit and augmentation. 
We will be concerned with the much simpler issue of finding a homogeneous element 
$a \in A^{(1,0_{G'})}$ such that $d_{A,a} = d_{A} + \mathrm{ad}(a)$, where $\mathrm{ad}(a) : A \rightarrow A$ is the morphism of graded $k$-modules of degree $(1,0_{G'})$ given by $a' \mapsto [a,a'] = a a' - (-1)^{\deg a'} a' a$, is a differential, \textit{i.e.} $d_{A,a}^{2} = 0$, and $A$ is an augmented dg algebra for the same product, unit and augmentation as before but new differential $d_{A,a}$. 
It is clear that the map $d_{A,a}$ is a differential if and only if $d_{A}(a) + a a = d_{A}(a) + [a, a]/2$ lies in the (graded) center of $A$ (the latter identity having meaning if the characteristic of $k$ is different from $2$), and in particular if $d_{A}(a) + a a = 0$, which is called the \emph{Maurer-Cartan equation} for $a$ (this can be done in fact for any dg Lie algebra if the characteristic is different from $2$). 
It is trivially verified that $d_{A,a}$ is always a derivation, that $d_{A,a}(1_{A})=0$ and $\epsilon_{A} \circ d_{A,a} = 0$, so $A$ is an augmented algebra for $d_{A,a}$ if and only if the latter is a differential. 
This is always the case if $a$ satisfies the Maurer-Cartan equation. 
The procedure of obtaining $d_{A,a}$ from $d_{A}$ and an element $a$ satisfying the Maurer-Cartan equation is usually called a \emph{twist} of the differential, and the new augmented dg algebra is called 
the \emph{twisted augmented dg algebra of $(A,d_{A})$ by $a$}. 
Moreover, we see that this latter twisting construction is natural, in the sense that if $f : A \rightarrow A'$ is a morphism of augmented dg algebras and $a \in A^{(1,0_{G'})}$ satisfies the Maurer-Cartan equation, then $f(a) \in (A')^{(1,0_{G'})}$ also satisfies the Maurer-Cartan equation and $f$ can be regarded as a morphism of augmented dg algebras for $A$ and $A'$ provided 
with the new differentials $d_{A,a}$ and $d_{A',f(a)}$, respectively. 

Suppose further that we are given a dg $A$-bimodule $M$ of an augmented dg algebra $A$. 
Let us consider an element $a \in A^{(1,0_{G'})}$ solution to the Maurer-Cartan equation, 
so we may regard the new augmented dg algebra structure on $A$ given by only changing the differential by $d_{A,a}$. 
One may wonder how to twist the differential $d_{M}$ of $M$ in order to still define a dg bimodule with the same action map $A \otimes M \otimes A \rightarrow M$ over the new augmented dg algebra $A$ provided with the differential $d_{A,a}$. 
It is clear that the only condition one needs to verify for the new differential on $M$ is the Leibniz identity, for the others are automatic. 
Furthermore, it is easily verified that the new differential $d_{M,a}$ given by $d_{M} + \mathrm{ad}(a)$, where $\mathrm{ad}(a) : M \rightarrow M$ is the morphism of graded $k$-modules of degree $(1,0_{G'})$ given by $m \mapsto a m - (-1)^{\deg m} m a$, satisfies the Leibniz identity and thus defines on $M$ the structure of a dg bimodule over the augmented dg algebra $A$ provided with the differential $d_{A,a}$. 
If $M$ and $N$ are two dg bimodules over the augmented dg algebra $A$ with differential $d_{A}$, and $g : M \rightarrow N$ is a morphism of dg bimodules, then it is also a morphism of dg bimodules over the augmented dg algebra $A$ provided with the differential $d_{A,a}$, when we regard $M$ and $N$ with differentials $d_{M,a}$ and $d_{N,a}$, respectively. 

\subsubsection{The convolution algebra and the tensor product module}
\label{subsubsec:convo}

The following constructions are well-known (see \cite{Prou}, Lemme 1.35). 
Let $C$ be a coaugmented dg coalgebra a $A$ an augmented dg algebra. 
Consider the dg $k$-module given by $\mathcal{H}om (C,A)$. 
It is in fact an augmented dg algebra with product given by 
\[     \phi * \psi = \mu_{A} \circ (\phi \otimes \psi) \circ \Delta_{C},     \]
unit $\eta_{A} \circ \epsilon_{C}$ and augmentation $\phi \mapsto (\epsilon_{A} \circ \phi \circ \eta_{C})(1_{k})$. 
We remark that, using the Sweedler notation, the coproduct can be written as 
\[     (\phi * \psi) (c) = (-1)^{\deg \psi \deg c_{(1)}} \phi(c_{(1)}) \psi(c_{(2)}),     \]
for $c \in C$. 
Note that the previous construction is natural, \textit{i.e.} if $f' : C' \rightarrow C$ is a morphism of coaugmented dg coalgebras and $f : A \rightarrow A'$ is a morphism of augmented dg algebras, then the morphism of dg $k$-modules $\mathcal{H}om(f',f) : \mathcal{H}om (C,A) \rightarrow \mathcal{H}om (C',A')$ given by $\phi \mapsto f \circ \phi \circ f'$ 
is in fact a morphism of augmented dg algebras. 
If $k$ is semisimple, it is clear that if $f'$ and $f$ are quasi-isomorphisms, then $\mathcal{H}om(f',f)$ is so (see \cite{Bour}, \S 5.2. Cor. 1, and \S 2.5, Ex. 4).

Moreover, given $M$ any dg bimodule over $A$ we see that the dg $k$-module given by the tensor product $M \otimes C$ has a structure of a dg bimodule over $\mathcal{H}om (C,A)$. 
The action is given by 
\[     \phi \cdot (m \otimes c) \cdot \psi = (-1)^{\epsilon} \phi(c_{(3)}).m.\psi(c_{(1)}) \otimes c_{(2)},     \]
where $m \in M$, $c \in C$, $\phi, \psi \in \mathcal{H}om (C,A)$, and 
\[     \epsilon = \deg \psi \deg c + \deg c_{(3)} (\deg m + \deg c_{(1)} + \deg c_{(2)} + \deg \psi).     \] 
If $f' : C' \rightarrow C$ is a morphism of coaugmented dg coalgebras and $g : M' \rightarrow M$ is a morphism of dg bimodules over $A$, then $g \otimes f'$ is a morphism of dg bimodules over $\mathcal{H}om(C,A)$, where $M' \otimes C'$ has the structure of dg bimodule over $\mathcal{H}om(C,A)$ given by the restriction of scalar through $\mathcal{H}om(f',\mathrm{id}_{A}) : \mathcal{H}om(C,A) \rightarrow \mathcal{H}om(C',A)$. 
Provided $k$ is Von Neumann regular (in particular, this holds if $k$ is semisimple), if $f'$ and $g$ are quasi-isomorphisms, then $g \otimes f'$ is also (see \cite{Bour}, \S 4.7. Th\'eo. 3). 

Suppose that $M$ has in fact two graded-commuting dg $A$-bimodule structures, \textit{i.e.} $M$ is a dg $A^{e}$-bimodule (\textit{e.g.} $M = A^{e}$).  
In this case one may use one of the dg bimodule structures over $A$ on $M$ to induce the dg bimodule structure over $\mathcal{H}om (C,A)$ on $M \otimes C$, whereas the second dg bimodule structure over $A$ on $M$ gives in fact a dg $A$-bimodule structure on $M \otimes C$ by the formula
\[     a (m \otimes c) a' = (-1)^{\deg a' \deg c} (a m a') \otimes c,     \]
where we remark that we are using the second dg $A$-bimodule structure on $M$. 
Moreover, both structures are compatible, \textit{i.e.} $M \otimes C$ has in fact a dg bimodule over $\mathcal{H}om (C,A) \otimes A$. 

\subsubsection{The twisted convolution algebra and the twisted tensor product}
\label{subsubsec:twistedconvo}

A solution $\tau \in \mathcal{H}om (C,A)$ to the Maurer-Cartan equation on the augmented dg algebra $\mathcal{H}om (C,A)$ which also satisfies that $\epsilon_{C} \circ \tau = 0$ and $\tau \circ \eta_{A} =0$ is called a \emph{twisting cochain} (some authors call these \emph{admissible} twisting cochains, 
because for them the term twisting cochain is any solution of the Maurer-Cartan equation of $\mathcal{H}om (C,A)$. See for \textit{e.g.} \cite{LH}, D\'ef. 2.2.1.1). 
We will denote the augmented dg algebra $\mathcal{H}om (C,A)$ with the twisted differential $d_{\mathcal{H}om (C,A),\tau}$ by $\mathcal{H}om^{\tau} (C,A)$. 
By the last paragraph of the two previous subsubsections, it is easy to see that the twist construction is natural, \textit{i.e.} given $\tau \in \mathcal{H}om (C,A)$ a twisting cochain, 
a morphism of coaugmented dg coalgebras $f' : C' \rightarrow C$ and a morphism of augmented dg algebras $f : A \rightarrow A'$, 
$\mathcal{H}om(f',f)(\tau) \in \mathcal{H}om (C',A')$ is a twisting cochain and $\mathcal{H}om(f',f)$ induces a morphism of augmented dg algebras from $\mathcal{H}om^{\tau} (C,A)$ to 
$\mathcal{H}om^{f \circ \tau \circ f'} (C',A')$. 
Even for $k$ semisimple and $f$ and $f'$ quasi-isomorphisms, the latter map need not be a quasi-isomorphism. 
A typical example would be as follows. 
Consider the quasi-isomorphism $\mathcal{H}om(B^{+}(A_{+}),A_{+}) \rightarrow \mathcal{H}om(k,A_{+}) \simeq A_{+}$ of augmented dg algebras given by composition with the canonical injection 
$k \subseteq B^{+}(A_{+})$, where $A$ is a unitary algebra and $A_{+}$ is the augmented algebra recalled in the fifth paragraph of Subsubsection \ref{subsubsec:cobar} 
(the fact that it is a quasi-isomorphism follows from the comments in that paragraph). 
It is clear that the image of the twisting cochain $\tau_{A_{+}}$ under the previous mapping is zero, so we get a morphism $\mathcal{H}om^{\tau_{A_{+}}}(B^{+}(A_{+}),A_{+}) \rightarrow A_{+}$ 
of augmented dg algebras. 
Taking cohomology we obtain the morphism $HH^{\bullet}(A_{+}) \rightarrow A_{+}$ given by the composition of the canonical projection $HH^{\bullet}(A_{+}) \rightarrow HH^{0}(A_{+}) \simeq \mathcal{Z}(A_{+})$ 
together with the inclusion of the center $\mathcal{Z}(A_{+})$ of $A_{+}$ inside $A_{+}$. 
If $A$ is noncommutative we see that the previous map in cohomology is not an isomorphism. 

Given $M$ any dg bimodule over $A$, and a twisting cochain $\tau$ in $\mathcal{H}om(C,A)$, we see that we may twist the differential $d_{M \otimes C}$ of the 
tensor product $M \otimes C$, which is a dg bimodule over the augmented dg algebra $\mathcal{H}om(C,A)$, in order to obtain the dg bimodule provided with the differential 
$d_{M \otimes C,\tau}$ over the augmented dg algebra $\mathcal{H}om^{\tau}(C,A)$. 
We shall denote this new dg bimodule by $M \otimes_{\tau} C$. 
If $f'' : C' \rightarrow C$ is a morphism of coaugmented dg coalgebras and $g : M' \rightarrow M'$ is a morphism of dg bimodules over $A$, then $g \otimes f'$ is a morphism of dg bimodules over $\mathcal{H}om^{\tau} (C,A)$ from $M' \otimes_{\tau \circ f'} C'$ to $M \otimes_{\tau} C$, where we regard $M' \otimes_{\tau \circ f'} C'$ as a dg bimodule over $\mathcal{H}om^{\tau} (C,A)$ by means of the morphism of augmented dg algebras $\mathcal{H}om(f',\mathrm{id}_{A}) : \mathcal{H}om^{\tau}(C,A) \rightarrow \mathcal{H}om^{\tau \circ f'}(C',A)$. 
If $M$ has two graded-commuting dg $A$-bimodule structures, so $M \otimes C$ is a dg bimodule over $\mathcal{H}om (C,A) \otimes A$, 
one notices that the previous twisting construction implies that $M \otimes_{\tau} C$ has in fact a dg bimodule structure over $\mathcal{H}om^{\tau} (C,A) \otimes A$. 

\subsubsection{The bar constructions}
\label{subsubsec:bar}

We point out the well-known fact that the dg $k$-module given by $k \otimes_{A^{e}} \overline{\operatorname{Bar}}(A)$ is further a coaugmented dg coalgebra, called the \emph{(reduced or normalized) bar construction} of $A$, and it is denoted by $B^{+}(A)$ (see \cite{FHT01}, Section 19, but also \cite{LH}, Notation 2.2.1.4, which we follow, though we do not use the same sign conventions).
Note that in this case the reduced bar resolution $\overline{\operatorname{Bar}}(A)$ can be equivalently presented as the graded $A$-bimodule $\oplus_{i \in \NN_{0}} A \otimes I_{A}[1]^{\otimes i} \otimes A$ provided with a differential given by the same expression, 
the isomorphism being induced by the obvious map $I_{A} \rightarrow A/k$. 
Moreover, using this identification we may construct an explicit quasi-inverse to the canonical quasi-isomorphism $\operatorname{Bar}(A) \rightarrow \overline{\operatorname{Bar}}(A)$ 
explained in the third paragraph of Subsection \ref{subsec:barrhoch}. 
Indeed, it is easy to check that the map 
\[     \oplus_{i \in \NN_{0}} A \otimes I_{A}[1]^{\otimes i} \otimes A \rightarrow \oplus_{i \in \NN_{0}} A \otimes A[1]^{\otimes i} \otimes A     \]
induced by the inclusion $I_{A}[1] \subseteq A[1]$ is such a quasi-isomorphism. 
Now, using the obvious isomorphism 
\[     k \otimes_{A^{e}} \overline{\operatorname{Bar}}(A) \simeq \bigoplus_{i \in \NN_{0}} I_{A}[1]^{\otimes i},     \]
induced by the identification explained previously, the coproduct is given by the usual deconcatenation 
\[     \Delta ([a_{1}|\dots |a_{n}]) = \sum_{i=0}^{n} [a_{1}|\dots | a_{i}] \otimes [a_{i+1} | \dots | a_{n}],     \]
where we set $[a_{i} | \dots | a_{j}] = 1_{B^{+}(A)}$ if $i > j$, for $1_{B^{+}(A)}$ the image of $1_{k}$ under the canonical inclusion $k = A[1]^{\otimes 0} \subseteq B^{+}(A)$. 
The differential, denoted by $B$, is trivially seen to be of the form 
\begin{equation*}
\begin{split}
     B([a_{1} | \dots | a_{n}]) =&- \sum_{i=1}^{n} (-1)^{\epsilon_{i}} [a_{1} | \dots | d_{A}(a_{i}) | \dots | a_{n}]
     \\
     &+ \sum_{i=2}^{n} (-1)^{\epsilon_{i}} [a_{1} | \dots | a_{i-1} a_{i} | \dots | a_{n}], 
\end{split}
\end{equation*}
where $\epsilon_{i} = (\sum_{j=1}^{i-1} \deg a_{j}) - i +1$. 
One checks easily that it is a coderivation. 
The counit is given by the canonical projection $B^{+}(A) \rightarrow I_{A}[1]^{\otimes 0} = k$, and the coaugmentation is defined as the obvious inclusion $k = I_{A}[1]^{\otimes 0} \subseteq B^{+}(A)$. 
Since $B^{+}(A)$ is a coaugmented tensor graded coalgebra, it is \emph{cocomplete}. 
The image of its differential $B$ lies inside the augmentation kernel $\operatorname{Ker}(\epsilon_{B^{+}(A)})$ 
of $B^{+}(A)$, so $B$ is thus uniquely determined by $\pi_{1} \circ B$, where $\pi_{1} : B^{+}(A) \rightarrow I_{A}[1]$ is the canonical projection 
(see \cite{LH}, Lemme 1.1.2.2, Sections 2.1.1 and 2.1.2, and Notation 2.2.1.4). 
We recall that a coaugmented graded coalgebra $C$ is cocomplete (or sometimes denoted as \emph{conilpotent}) if the (quotient) nonunitary graded coalgebra $J_{C} = C/\operatorname{Im}(\eta_{C})$ satisfies that its \emph{primitive filtration} given by 
\[     \operatorname{Ker}(\Delta_{J_{C}}) \subseteq \operatorname{Ker}(\Delta_{J_{C}}^{(3)}) \subseteq \dots \subseteq \operatorname{Ker}(\Delta_{J_{C}}^{(n)}) \subseteq \dots     \]
is exhausting, \textit{i.e.} its union is $J_{C}$. 
This composition map $\pi_{1} \circ B$ is written as the sum of two terms $b_{1} : I_{A}[1] \rightarrow I_{A}[1]$ and $b_{2} : I_{A}[1]^{\otimes 2} \rightarrow I_{A}[1]$ given by $sa \mapsto -sda$ and $sa \otimes sb \mapsto (-1)^{\deg a + 1} s(a b)$, for $a, b \in A$ homogeneous, resp. 
We note thus that $b_{1} = - s_{I_{A}} \circ d_{A} \circ s_{I_{A}}^{-1}$ and $b_{2} = - s_{I_{A}} \circ \mu_{A} \circ (s_{I_{A}}^{\otimes 2})^{-1}$. 
We warn the reader that even though we have set up our sign conventions for the differential of the bar construction in order to agree with several in the literature (\textit{e.g.} \cite{FHT01}, Section 19, or \cite{LPWZ04}, Section 8), and in particular they coincide with the ``universally'' accepted conventions in case our dg algebra is a plain algebra, 
they differ from others (\textit{e.g.} those in the thesis \cite{LH} of K. Lef\`evre-Hasegawa, Ch. 1 and 2). 
By very definition, the cohomology of $B^{+}(A)$ is the Tor group $\operatorname{Tor}_{\bullet}^{A}(k,k)$, where we recall that we should switch 
from cohomological (upper) grading to homological (lower) grading by the obvious relation $V^{n} = V_{-n}$, for $n \in \ZZ$, and where the Adams degree does not change. 

We remark that there is a \emph{nonreduced (or unnormalized) bar construction} $\tilde{B}^{+}(A)$ of $A$ given as a dg module over $k$ by $k \otimes_{A^{e}} \operatorname{Bar}(A)$, 
which in turn is canonically isomorphic to $\oplus_{i \in \NN_{0}} A[1]^{\otimes i}$ as graded $k$-modules. 
It is also a coaugmented dg coalgebra, and the formulas for the coproduct, the counit and the coaugmentation are the same as for the reduced bar construction. 
Its underlying coaugmented graded coalgebra structure is thus the one of a coaugmented tensor graded coalgebra. 
The explicit form of the differential, which we denote by $\tilde{B}$, is however different from the reduced case, namely, $\tilde{B}([]) = 0$ and for $n \in \NN$ we have that 
\begin{equation*}
\begin{split}
     \tilde{B}([a_{1} | \dots | a_{n}]) =&- \sum_{i=1}^{n} (-1)^{\epsilon_{i}} [a_{1} | \dots | d_{A}(a_{i}) | \dots | a_{n}]
     \\
     &+ \sum_{i=2}^{n} (-1)^{\epsilon_{i}} [a_{1} | \dots | a_{i-1} a_{i} | \dots | a_{n}] 
     \\
      &+ \epsilon_{A}(a_{1}) [a_{2} | \dots | a_{n}] - (-1)^{\epsilon_{n}} \epsilon(a_{n}) [a_{1} | \dots | a_{n-1}], 
\end{split}
\end{equation*}
where $\epsilon_{i} = (\sum_{j=1}^{i-1} \deg a_{j}) - i +1$, and $a_{1}, \dots, a_{n} \in A$. 
A rather long computation shows it is in fact a coderivation. 
It is not difficult to verify that the image of $\tilde{B}$ lies inside the augmentation kernel of $\tilde{B}^{+}(A)$, so it is uniquely determined by its composition with the canonical projection 
$p_{1} : \tilde{B}^{+}(A) \rightarrow A[1]$, which is just the sum of two terms $\tilde{b}_{1} : A[1] \rightarrow A[1]$ and $\tilde{b}_{2} : A[1]^{\otimes 2} \rightarrow A[1]$ given by 
$sa \mapsto -sda$ and $sa \otimes sb \mapsto (-1)^{\deg a + 1} s(a b) + \epsilon_{A}(a) sb - (-1)^{\deg a + 1} \epsilon_{A}(b) sa$, for $a, b \in A$ homogeneous, resp. 
Note that the definition just given before does not coincide with the one given in \cite{FMT}, for our differential is different (due to the last two terms in the explicit expression of $\tilde{B}$). 
In particular, their definition of nonreduced bar construction of a unitary dg algebra is always quasi-isomorphic to $k$, whereas in our case it will be quasi-isomorphic to the reduced bar construction. 

Moreover, the quasi-isomorphism $\operatorname{Bar}(A) \rightarrow \overline{\operatorname{Bar}}(A)$ of dg $A$-bimodules given by taking quotients and recalled in the third paragraph of Subsection \ref{subsec:barrhoch} induces a quasi-isomorphism $\tilde{B}^{+}(A) \rightarrow B^{+}(A)$ of dg modules over $k$ also given by taking quotients. 
By the explicit expressions for the coproduct, the counit and the coaugmentation for both bar constructions the previous map is in fact a quasi-isomorphism of coaugmented dg coalgebras. 
Furthermore, the quasi-isomorphism $\overline{\operatorname{Bar}}(A) \rightarrow \operatorname{Bar}(A)$ of dg $A$-bimodules induced by the inclusion $I_{A}[1] \subseteq A[1]$ 
also induces a quasi-isomorphism of coaugmented dg coalgebras $B^{+}(A) \rightarrow \tilde{B}^{+}(A)$, which is the quasi-inverse to 
the map $\tilde{B}^{+}(A) \rightarrow B^{+}(A)$.

We also want to stress the fact that the bar constructions are functorial. 
Indeed, if $f : A \rightarrow A'$ is a morphism of augmented dg algebras, the unique morphism of coaugmented graded coalgebras $B^{+}(f) : B^{+}(A) \rightarrow B^{+}(A')$ whose composition with the canonical projection $\pi_{1}' : B^{+}(A') \rightarrow I_{A'}[1]$, where $I_{A'}$ is the augmentation kernel of $A'$, is given by the composition of the canonical projection $\pi_{1} : B^{+}(A) \rightarrow I_{A}[1]$, where $I_{A}$ is the augmentation kernel of $A$, together with $- f_{[1]}^{[1]}$ commutes with the differentials, so it gives a morphism of coaugmented dg coalgebras. 
It can be described explicitly as $B^{+}(f)([a_{1} | \dots | a_{n}]) = [f(a_{1}) | \dots | f(a_{n})]$, for $n \in \NN_{0}$ and $a_{1}, \dots, a_{n} \in A$. 
We remark that the minus sign in front of the $f_{[1]}^{[1]}$ was not quite arbitrary: had we chosen the plus sign, then we should have added a $(-1)^{n}$ sign to the previous expression of 
$B^{+}(f)$. 
This would imply in particular that $B^{+}(\mathrm{id}_{A})$ would be different from $\mathrm{id}_{B^{+}(A)}$, so it would not be a functorial choice.  
The explicit expression of the corresponding morphism of coaugmented dg coalgebras $\tilde{B}^{+}(f) : \tilde{B}^{+}(A) \rightarrow \tilde{B}^{+}(A')$ for the nonreduced bar construction 
is the same as for the reduced one. 
Moreover, the quasi-isomorphisms $\tilde{B}^{+}(A) \rightarrow B^{+}(A)$ and $B^{+}(A) \rightarrow \tilde{B}^{+}(A)$ of coaugmented dg coalgebras described in the previous paragraph yield 
in fact natural transformations. 
It is not difficult to show that, given a morphism of augmented dg algebras $f : A \rightarrow A'$ over a Von Neumann regular ring $k$, 
$f$ is a quasi-isomorphism if and only if $B^{+}(f)$ (or $\tilde{B}^{+}(f)$) is also a quasi-isomorphism (see \cite{FHT01}, Section 19, Ex. 1, p. 271, or \cite{LH}, Lemme 1.3.2.2, and Lemme 1.3.2.3, (a) and (b)). 

If $A$ is an augmented dg algebra and we consider its bar construction $B^{+}(A)$, there is in fact a \emph{ (normalized) universal twisting cochain} $\tau_{A} : B^{+}(A) \rightarrow A$ of $A$ whose restriction to 
$I_{A}[1]^{\otimes n}$ vanishes if $n \neq 1$ and such that its restriction to $I_{A}[1]$ is given by minus the composition of canonical inclusion $I_{A}[1] \subseteq A[1]$ with $s_{A}^{-1}$. 
Analogously, we also have a a \emph{unnormalized universal twisting cochain} $\tilde{\tau}_{A} : \tilde{B}^{+}(A) \rightarrow A$ of $A$ 
whose restriction to $A[1]^{\otimes n}$ vanishes if $n \neq 1$ and such that its restriction to $A[1]$ is given by the composition of minus $s_{A}^{-1}$, the projection $A \rightarrow I_{A}$ 
(which can be described by the composition of the canonical projection $A \rightarrow A/k.1_{A}$ with the identification $A/k.1_{A} \simeq I_{A}$) and the inclusion $I_{A} \rightarrow A$. 
Note that, if $f : A \rightarrow A'$ is morphism of augmented dg algebras, then the morphism $B^{+}(f)$ satisfies that 
\begin{equation}
\label{eq:commb}
 f \circ \tau_{A} = \tau_{A'} \circ B^{+}(f)
\end{equation} 
and the same expression holds for the corresponding morphism $\tilde{B}^{+}(f)$ by considering the unnormalized universal twisting cochains. 

\subsubsection{Hochschild (co)homology as a twisted construction} 
\label{subsubsec:hochschildtwisted}

Now, given an augmented dg algebra $A$, we may consider the augmented dg algebra $\mathcal{H}om (B^{+}(A),A)$. 
It is easy to verify that its underlying graded $k$-module coincides with the corresponding one computing the Hochschild cohomology, but the underlying dg $k$-module is different. 
Indeed, the differential of $\mathcal{H}om (B^{+}(A),A)$ is given by $D_{0}$ in \eqref{eq:dif0cohomo}.
However, a twist of the differential of $\mathcal{H}om (B^{+}(A),A)$ will give us precisely the differential of $\mathcal{H}om_{A^{e}} (\overline{\operatorname{Bar}}(A),A)$: it is easy to check that $d_{\mathcal{H}om (B^{+}(A),A),\tau_{A}}$ coincides with the differential $D_{0} + D_{1}$ of the dg $k$-module computing the Hochschild cohomology of $A$ given by \eqref{eq:dif0cohomo} and \eqref{eq:dif1cohomo}, since it is clear that $D_{1}(f) = \mathrm{ad}(\tau_{A})(f)$. 
In other words, the dg $k$-module $\mathcal{H}om_{A^{e}} (\overline{\operatorname{Bar}}(A),A)$ is canonically identified with $\mathcal{H}om^{\tau_{A}} (B^{+}(A),A)$. 
The latter has further the structure of a(n augmented) dg algebra, whose multiplication is usually called \emph{cup product} (see \cite{CE}, Ch. XI, \S 4 and 6), so the Hochschild cohomology $HH^{\bullet}(A)$ thus becomes an augmented graded algebra. 
All the previous comments apply \textit{mutatis mutandi} as well to $\tilde{B}^{+}(A)$ instead of $B^{+}(A)$. 
Moreover, if $k$ is semisimple, 
the canonical quasi-isomorphisms $\tilde{B}^{+}(A) \rightarrow B^{+}(A)$ and $B^{+}(A) \rightarrow \tilde{B}^{+}(A)$ of coaugmented dg coalgebras 
induce quasi-isomorphisms of augmented dg algebras 
\[     \mathcal{H}om (B^{+}(A),A) \rightarrow \mathcal{H}om (\tilde{B}^{+}(A),A)  \hskip 1mm \text{   and   } \hskip 1mm \mathcal{H}om (\tilde{B}^{+}(A),A) \rightarrow \mathcal{H}om (B^{+}(A),A)     \] 
sending $\tau_{A}$ to $\tilde{\tau}_{A}$ and $\tilde{\tau}_{A}$ to $\tau_{A}$, respectively. 
Moreover, they also induce quasi-isomorphisms of augmented dg algebras 
\[     \mathcal{H}om^{\tau_{A}} (B^{+}(A),A) \rightarrow \mathcal{H}om^{\tilde{\tau}_{A}} (\tilde{B}^{+}(A),A) \]
and    
\[     \mathcal{H}om^{\tilde{\tau}_{A}} (\tilde{B}^{+}(A),A) \rightarrow \mathcal{H}om^{\tau_{A}} (B^{+}(A),A).     \]
Indeed, the two maps are obtained  by applying the functor $\mathcal{H}om_{A^{e}} (\place , A)$ to the quasi-isomorphisms $\operatorname{Bar}(A) \rightarrow \overline{\operatorname{Bar}}(A)$ 
and $\overline{\operatorname{Bar}}(A) \rightarrow \operatorname{Bar}(A)$, respectively, and using the canonical identifications explained in the last paragraph of Subsection \ref{subsec:barrhoch}. 

The Hochschild homology of an augmented dg algebra $A$ can be regarded in a similar fashion. 
The underlying graded $k$-module $A \otimes_{A^{e}} \overline{\operatorname{Bar}}(A)$ computing the Hochschild homology of $A$ coincides with $A \otimes B^{+}(A)$, 
though the dg $k$-module structure is different, for the differential of $A \otimes B^{+}(A)$ the former coincides with $D_{0}'$ given in \eqref{eq:dif0homo}. 
However, the twist of the differential of $A \otimes B^{+}(A)$ will give us precisely the differential of $A \otimes_{A^{e}} \overline{\operatorname{Bar}}(A)$ as for the cohomology. 
We see that $d_{A \otimes B^{+}(A),\tau_{A}}$ coincides with the differential $D_{0}'+D_{1}'$ of the dg $k$-module computing the Hochschild homology of $A$ given by \eqref{eq:dif0homo} and \eqref{eq:dif1homo}, since it is easily verified that $D_{1}'$ coincides with the action of $\mathrm{ad}(\tau_{A})$ on $A \otimes B^{+}(A)$. 
Thus, the dg $k$-module $A \otimes_{A^{e}} \overline{\operatorname{Bar}}(A)$ is canonically identified with $A \otimes_{\tau_{A}} B^{+}(A)$. 
By the last paragraph of Subsubsection \ref{subsubsec:twistedconvo}, the latter has further the structure of a dg bimodule over $\mathcal{H}om^{\tau_{A}} (B^{+}(A),A)$. 
Either the left or the right action (or both of them together) may be called \emph{cap product} (see \cite{CE}, Ch. XI, \S 4 and 6), so the Hochschild homology $HH_{\bullet}(A)$ in turn becomes a graded bimodule over the augmented graded algebra given by Hochschild cohomology $HH^{\bullet}(A)$. 
Again, all the previous comments apply \textit{mutatis mutandi} to $\tilde{B}^{+}(A)$ instead of $B^{+}(A)$. 
If $k$ is Von Neumann regular, the canonical quasi-isomorphism $\tilde{B}^{+}(A) \rightarrow B^{+}(A)$ (resp., $B^{+}(A) \rightarrow \tilde{B}^{+}(A)$) of coaugmented dg coalgebras induces a quasi-isomorphism of dg bimodules $A \otimes \tilde{B}^{+}(A) \rightarrow A \otimes B^{+}(A)$ over $\mathcal{H}om (B^{+}(A),A)$ 
(resp., $A \otimes B^{+}(A) \rightarrow A \otimes \tilde{B}^{+}(A)$ over $\mathcal{H}om (\tilde{B}^{+}(A),A)$), 
where the domain has a structure of bimodule over $\mathcal{H}om (B^{+}(A),A)$ (resp., $\mathcal{H}om (\tilde{B}^{+}(A),A)$) by 
$\mathcal{H}om (B^{+}(A),A) \rightarrow \mathcal{H}om (\tilde{B}^{+}(A),A)$ (resp., $\mathcal{H}om (\tilde{B}^{+}(A),A) \rightarrow \mathcal{H}om (B^{+}(A),A)$), 
which is the morphism of augmented dg algebras explained in the previous paragraph.  
Furthermore, we have a quasi-isomorphism of dg bimodules  
$A \otimes_{\tilde{\tau}_{A}} \tilde{B}^{+}(A) \rightarrow A \otimes_{\tau_{A}} B^{+}(A)$ (resp., $A \otimes_{\tau_{A}} B^{+}(A) \rightarrow A \otimes_{\tilde{\tau}_{A}} \tilde{B}^{+}(A)$) over the algebra $\mathcal{H}om^{\tau_{A}} (B^{+}(A),A)$ (resp., $\mathcal{H}om^{\tilde{\tau}_{A}} (\tilde{B}^{+}(A),A)$), where the domain has a structure of bimodule over the algebra  
$\mathcal{H}om^{\tau_{A}} (B^{+}(A),A)$ (resp., $\mathcal{H}om^{\tilde{\tau}_{A}} (\tilde{B}^{+}(A),A)$) given by $\mathcal{H}om^{\tau_{A}} (B^{+}(A),A) \rightarrow \mathcal{H}om^{\tilde{\tau}_{A}} (\tilde{B}^{+}(A),A)$ (resp., given by $\mathcal{H}om^{\tilde{\tau}_{A}} (\tilde{B}^{+}(A),A) \rightarrow \mathcal{H}om^{\tau_{A}} (B^{+}(A),A)$) of augmented dg algebras explained in the previous paragraph.
Indeed, the previous maps follow by applying the functor $A \otimes_{A^{e}} (\place)$ to the quasi-isomorphisms 
$\operatorname{Bar}(A) \rightarrow \overline{\operatorname{Bar}}(A)$ 
and $\overline{\operatorname{Bar}}(A) \rightarrow \operatorname{Bar}(A)$, respectively, and using the canonical identifications explained in the penultimate paragraph of Subsection \ref{subsec:barrhoch}. 

We remark that in fact the graded bimodule $HH_{\bullet}(A)$ over $HH^{\bullet}(A)$ is (graded) symmetric, as one may easily deduce as follows. 
Indeed, as noted in the literature (see for instance \cite{Me}, (10) and the proof of Lemma 16), $H^{\bullet}(A,A^{\#})$ is a symmetric graded bimodule over $HH^{\bullet}(A)$ 
(for the action in fact comes from the cup product on $HH^{\bullet}(A[M])$, where $A[M]$ is the dg algebra with underlying dg module given by $A \oplus M$, the usual product $(a,m) \cdot (a', m') = (a a', a m' + m a')$, 
unit $(1_{A},0_{M})$ and augmentation $(a,m) \mapsto \epsilon_{A}(a)$), which is isomorphic to $HH_{\bullet}(A)^{\#}$. 

We summarize our previous comments in the following result. 
\begin{fact}
\label{fact:hh}
Let $A$ be an augmented dg algebra over $k$, and let $\tau_{A}$ denote the universal twisting cochain of $A$.
Then, 
\begin{itemize}
\item[(i)] the dg $k$-module $\mathcal{H}om_{A^{e}} (\overline{\operatorname{Bar}}(A),A)$ computing Hochschild cohomology is canonically identified with $\mathcal{H}om^{\tau_{A}} (B^{+}(A),A)$. 
Moreover, the cup product on the first complex coincides exactly with the convolution product on the latter. 
\item[(ii)] the dg $k$-module $A \otimes_{A^{e}} \overline{\operatorname{Bar}}(A)$ computing Hochschild homology is canonically identified with the twisted tensor product $A \otimes_{\tau_{A}} B^{+}(A)$. 
Furthermore, the symmetric bimodule structure of the first complex over $\mathcal{H}om_{A^{e}} (\overline{\operatorname{Bar}}(A),A)$ given by the cap product coincides exactly with the symmetric bimodule structure of the latter complex over $\mathcal{H}om^{\tau_{A}} (B^{+}(A),A)$. 
\end{itemize}
\end{fact}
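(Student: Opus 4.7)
The plan is to follow the roadmap already sketched at the beginning of Subsubsection \ref{subsubsec:hochschildtwisted}: in both cases, first identify the underlying graded $k$-modules via the canonical isomorphisms $\Phi^{A,A}$ and $\Phi_{A,A}$ introduced in Subsection \ref{subsec:barrhoch}, then observe that the ``extra'' piece of the Hochschild differential is exactly $\mathrm{ad}(\tau_{A})$ acting on the convolution algebra (resp.\ on the tensor product module), and finally check that the multiplicative (resp.\ bimodule) structures agree by a direct formula comparison.

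For part (i), I would start by noting that the identification
\[     \mathcal{H}om_{A^{e}}(\overline{\operatorname{Bar}}(A),A) \xrightarrow{\Phi^{A,A}} \mathcal{H}om(B^{+}(A),A)     \]
is an isomorphism of graded $k$-modules essentially by adjunction, since the underlying graded bimodule of $\overline{\operatorname{Bar}}(A)$ is free over $A^{e}$ on the graded submodule $B^{+}(A) \simeq k \otimes_{A^{e}} \overline{\operatorname{Bar}}(A)$. Next I would decompose the Hochschild differential as $D_{0} + D_{1}$ using \eqref{eq:dif0cohomo} and \eqref{eq:dif1cohomo} and compare. A term-by-term matching of \eqref{eq:dif0cohomo} with the coderivation $B$ of $B^{+}(A)$ recalled in Subsubsection \ref{subsubsec:bar} shows that $D_{0}$ is precisely the convolution differential $f \mapsto d_{A} \circ f - (-1)^{\deg f} f \circ B$ of the augmented dg algebra $\mathcal{H}om(B^{+}(A),A)$. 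For the remaining part, I would expand
\[     \mathrm{ad}(\tau_{A})(f) = \tau_{A} * f - (-1)^{\deg f} f * \tau_{A}     \]
using the deconcatenation coproduct on $B^{+}(A)$ and the fact that $\tau_{A}$ is supported on words of length one with value $-s_{A}^{-1}$; the two outer terms $a_{1} f([a_{2} | \dots | a_{n}])$ and $f([a_{1}|\dots|a_{n-1}]) a_{n}$ in \eqref{eq:dif1cohomo} drop out with exactly the signs $\bar\epsilon_{n}$ dictated by the Koszul rule, so $D_{1} = \mathrm{ad}(\tau_{A})$. The cup product identification is then the observation that, once translated through $\Phi^{A,A}$, the classical formula for the cup product of two Hochschild cochains coincides termwise with $\phi * \psi = \mu_{A} \circ (\phi \otimes \psi) \circ \Delta_{B^{+}(A)}$.

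For part (ii), I would repeat the same three-step procedure in parallel: use $\Phi_{A,A}$ to identify $A \otimes_{A^{e}} \overline{\operatorname{Bar}}(A)$ with $A \otimes B^{+}(A)$ as graded modules, recognise $D_{0}'$ from \eqref{eq:dif0homo} as the untwisted differential $d_{A} \otimes \mathrm{id} + \mathrm{id} \otimes B$ of $A \otimes B^{+}(A)$, and check that $D_{1}'$ from \eqref{eq:dif1homo} is precisely the action of $\mathrm{ad}(\tau_{A})$ on the bimodule $A \otimes B^{+}(A)$ over $\mathcal{H}om(B^{+}(A),A)$, via the explicit action formula given in Subsubsection \ref{subsubsec:convo}. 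For the cap product, I would invoke the last paragraph of Subsubsection \ref{subsubsec:convo} applied to $M = A$ with its two graded-commuting $A$-bimodule structures, which upgrades $A \otimes_{\tau_{A}} B^{+}(A)$ to a dg bimodule over $\mathcal{H}om^{\tau_{A}}(B^{+}(A),A) \otimes A$; writing out the resulting left and right actions and comparing with the textbook formulas for cap product (as given in \cite{CE}, Ch.~XI, \S 4 and 6) yields the desired identification, and the ensuing bimodule structure on $HH_{\bullet}(A)$ is automatically symmetric as noted just before the statement.

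The main obstacle is purely bookkeeping: each of the identities above is a Koszul-sign verification, but the number of sign sources — the degree shifts in the bar construction, the flips in the convolution product and in the bimodule action, the two graded-commuting $A$-bimodule structures on $A$ used in part (ii), and the sign conventions $b_{1} = -s \circ d_{A} \circ s^{-1}$ and $\tau_{A}|_{I_{A}[1]} = -s_{A}^{-1}$ fixed in Subsubsection \ref{subsubsec:bar} — makes the verification tedious. Nothing conceptual arises beyond the calculations already outlined immediately above the statement, so the heart of the proof is to write $\mathrm{ad}(\tau_{A})$ out explicitly on $\mathcal{H}om(B^{+}(A),A)$ and on $A \otimes B^{+}(A)$ and line it up term by term with $D_{1}$ and $D_{1}'$, which is why the result is labelled a fact.
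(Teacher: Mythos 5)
Your proposal follows essentially the same route as the paper: identify the underlying graded $k$-modules via $\Phi^{A,A}$ and $\Phi_{A,A}$, recognise $D_{0}$ (resp.\ $D_{0}'$) as the untwisted convolution (resp.\ tensor product) differential, verify that $D_{1} = \mathrm{ad}(\tau_{A})$ and $D_{1}' = \mathrm{ad}(\tau_{A})$ by the sign bookkeeping you describe, and match the cup and cap products with the convolution product and the bimodule action. This is exactly the content of the two paragraphs preceding the Fact in the paper, so the proposal is correct and takes the same approach.
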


We see that the previous description of Hochschild homology and cohomology groups is by no mean accidental. 
Indeed, it is a direct consequence of the definitions once one notes that the reduced (resp., nonreduced) bar resolution of $A$ is canonically identified (as dg $k$-modules) 
with $A^{e} \otimes_{\tau_{A}} B^{+}(A)$ (resp., $A^{e} \otimes_{\tilde{\tau}_{A}} \tilde{B}^{+}(A)$), 
where $A^{e}$ is a dg $A$-bimodule with the \emph{outer structure} of $A^{e}$ given by $a (a' \otimes b') b = (a a') \otimes (b' b)$, for $a, a', b, b' \in A$. 
The identification isomorphism is given by $(a_{n+1} \otimes a_{0}) \otimes [a_{1} | \dots | a_{n}] \mapsto (-1)^{\deg a_{n+1} (\deg a_{0} + \epsilon)} a_{0} [a_{1} | \dots | a_{n}] a_{n+1}$, 
where $\epsilon = (\sum_{i=1}^{n} \deg a_{i}) - n$. 
Consider the dg $A$-bimodule structure of $A^{e} \otimes_{\tau_{A}} B^{+}(A)$ (resp., $A^{e} \otimes_{\tilde{\tau}_{A}} \tilde{B}^{+}(A)$) coming from the \emph{inner structure} 
of $A^{e}$ given by 
\[     a (a' \otimes b') b = (-1)^{(\deg a' \deg a + \deg b \deg b' + \deg a \deg b)} (a' b) \otimes (a b'),     \] 
for $a, a', b, b' \in A$. 
By the last paragraphs of the Subsubsections \ref{subsubsec:convo} and \ref{subsubsec:twistedconvo}, it induces a structure of dg bimodule over the algebra $\mathcal{H}om^{\tau_{A}}(B^{+}(A), A) \otimes A$ 
(resp., $\mathcal{H}om^{\tilde{\tau}_{A}}(\tilde{B}^{+}(A), A) \otimes A$) on the twisted tensor product $A^{e} \otimes_{\tau_{A}} B^{+}(A)$ (resp., $A^{e} \otimes_{\tilde{\tau}_{A}} \tilde{B}^{+}(A)$).  
By means of this structure the previous identification gives in fact an isomorphism of dg bimodules over $\mathcal{H}om^{\tau_{A}}(B^{+}(A), A) \otimes A$ (resp., $\mathcal{H}om^{\tilde{\tau}_{A}}(\tilde{B}^{+}(A), A) \otimes A$).  
If we apply the functors $\mathcal{H}om_{A^{e}}(\place,A)$ and $A \otimes_{A^{e}} (\place)$ to the previous identifications of the bar constructions we get precisely 
the description of the dg $k$-modules computing Hochschild cohomology and homology as described in the previous two paragraphs. 

\subsubsection{The cobar constructions} 
\label{subsubsec:cobar}

Even though an analogous definition of the (un)reduced bar resolution for coaugmented dg coalgebras would be possible (\textit{e.g.} following the lines of Drinfeld's idea), 
we would recall a more \textit{ad-hoc} definition of the (un)reduced cobar constructions, without passing through the corresponding cobar resolutions.  
Given a coaugmented dg coalgebra $C$, $\Omega^{+}(C)$ denotes the augmented dg algebra called the \emph{(reduced or normalized) cobar construction} of $C$ 
(see \textit{e.g.} \cite{FHT01}, Section 19, or \cite{LPWZ04}, Def. 8.4). 
Its underlying augmented graded algebra structure is given by the tensor algebra on the graded vector space $J_{C}[-1] = C/\operatorname{Im}(\eta_{C})[-1]$, with the product defined by concatenation, unit given by the inclusion of $k$ and the obvious augmentation given by the canonical projection onto $k$. 
Since the algebra is free, a differential $D$ is uniquely determined by its restriction to $J_{C}[-1]$, which we take to be the sum of two terms 
\[     - s_{J_{C}[-1]}^{-1} \circ d_{C} \circ s_{J_{C}[-1]}     \] 
and 
\[     (s_{J_{C}[-1]}^{\otimes 2})^{-1} \circ \Delta_{J_{C}} \circ s_{J_{C}[-1]}.     \] 
If $n \in \NN$, an element of the form $s_{J_{C}[-1]}^{-1}(\bar{c}_{1}) \otimes \dots \otimes s_{J_{C}[-1]}^{-1} (\bar{c}_{n})$ will be typically denoted by $\cl{c_{1} | \dots | c_{n}}$, where $c_{1}, \dots, c_{n} \in C$. 
Analogously, we may denote by $\cl{}$ the unit $1_{\Omega^{+}(C)}$ of the algebra $\Omega^{+}(C)$. 
We may now write the differential $D$ more explicitly as 
\begin{equation*}
\begin{split}
     D(\cl{c_{1} | \dots | c_{n}}) =& - \sum_{i=1}^{n} (-1)^{\epsilon_{i}} \cl{c_{1} | \dots | c_{i-1} | d_{C}(c_{i}) | c_{i+1} | \dots | c_{n}} 
     \\
     &+ \sum_{i=1}^{n} (-1)^{\epsilon_{i}+ \deg c_{i,(1)}^{-} + 1} \cl{c_{1} | \dots | c_{i-1} | c_{i,(1)}^{-} | c_{i,(2)}^{-} | c_{i+1} | \dots | c_{n}},    
\end{split} 
\end{equation*}
where $\Delta_{J_{C}}(c_{i}) = c_{i,(1)}^{-} \otimes c_{i,(2)}^{-}$ and $\epsilon_{i} = (\sum_{j=1}^{i-1} \deg c_{j}) - i +1$. 
Note the coincidence with the sign in \cite{LPWZ04}, Def. 8.4, but the difference with the one given in \cite{FMT}, Section 2.7. 
Notice that, using the identification between $J_{C}$ and $\operatorname{Ker}(\epsilon_{C})$ given in the first paragraph of Subsection \ref{subsec:barchoch}, we may have 
equivalently presented the reduced cobar construction of $C$ as the tensor algebra on the graded vector space $\operatorname{Ker}(\epsilon_{C})$, with the product defined by concatenation, unit given by the inclusion of $k$ and the obvious augmentation given by the canonical projection onto $k$. 
The differential given by the same expression as before, but one should use $\Delta_{\operatorname{Ker}(\epsilon_{C})}$, defined in the first paragraph of Subsection \ref{subsec:barchoch}, instead of 
$\Delta_{J_{C}}$. 

There is also a \emph{nonreduced (or unnormalized) cobar construction} $\tilde{\Omega}^{+}(C)$ of $C$, whose underlying augmented graded algebra structure is given by the tensor algebra on the graded vector space $C[-1]$, with the product defined by concatenation, unit given by the inclusion of $k$ and the obvious augmentation given by the canonical projection onto $k$. 
As in the reduced case, an element of the form $s_{C[-1]}^{-1}(c_{1}) \otimes \dots \otimes s_{C[-1]}^{-1} (c_{n})$ will be usually denoted by $\cl{c_{1} | \dots | c_{n}}$, 
where $c_{1}, \dots, c_{n} \in C$. 
The explicit expression of the differential is given by 
\begin{equation*}
\begin{split}
     \tilde{D}(\cl{c_{1} | \dots | c_{n}}) =& - \sum_{i=1}^{n} (-1)^{\epsilon_{i}} \cl{c_{1} | \dots | c_{i-1} | d_{C}(c_{i}) | c_{i+1} | \dots | c_{n}} 
     \\
     &+ \sum_{i=1}^{n} (-1)^{\epsilon_{i}+ \deg c_{i,(1)} + 1} \cl{c_{1} | \dots | c_{i-1} | c_{i,(1)} | c_{i,(2)} | c_{i+1} | \dots | c_{n}}
     \\
     &+ \cl{1_{C} | c_{1} | \dots | c_{n}} - (-1)^{\epsilon_{n}} \cl{c_{1} | \dots | c_{n} | 1_{C}},    
\end{split} 
\end{equation*}
where $\Delta_{C}(c_{i}) = c_{i,(1)} \otimes c_{i,(2)}$. 
One may easily check that it is a derivation of square zero. 
As for the bar construction, there is a quasi-isomorphism of augmented dg algebras $\tilde{\Omega}^{+}(C) \rightarrow \Omega^{+}(C)$ given by the direct sum of the maps 
$(-q_{[1]}^{[1]})^{\otimes n}$, for $n \in \NN_{0}$, where $q$ denotes the canonical projection $C \rightarrow C/\operatorname{Im}(\eta_{C})$. 
Note that by convention, the underlying maps of $-q_{[1]}^{[1]}$ and of $q$ coincide. 
Furthermore, the previous quasi-isomorphism has a quasi-inverse given by the morphism of augmented dg algebras $\Omega^{+}(C) \rightarrow \tilde{\Omega}^{+}(C)$ induced 
by the inclusion $\operatorname{Ker}(\epsilon_{C})[-1] \subseteq C[-1]$, where we have also used the identification between $J_{C}$ and $\operatorname{Ker}(\epsilon_{C})$ given in the first paragraph of this subsection. 
Notice again that our definition of nonreduced cobar construction differs from the one given for instance in \cite{FMT}, Subsection 2.7. 

If $C$ is a coaugmented dg coalgebra and we consider its cobar construction $\Omega^{+}(C)$, there is in fact a \emph{universal (normalized) twisting cochain} $\tau^{C} : C \rightarrow \Omega^{+}(C)$ of $C$, given by the composition of the canonical projection $C \rightarrow C/\operatorname{Im}(\eta_{C})$, $s_{C/\operatorname{Im}(\eta_{C})[-1]}^{-1}$ and the canonical inclusion 
of $C/\operatorname{Im}(\eta_{C})[-1]$ inside $\Omega^{+}(C)$. 
Analogously, we also have a a \emph{universal (unnormalized) twisting cochain} $\tau^{C} : C \rightarrow \tilde{\Omega}^{+}(C)$ of $C$ 
given by the composition of the projection $C \rightarrow \operatorname{Ker}(\epsilon_{C})$ (given by the composition of the canonical projection $C \rightarrow C/\operatorname{Im}(\eta_{C})$ and the identification $ C/\operatorname{Im}(\eta_{C}) \simeq \operatorname{Ker}(\epsilon_{C})$), the canonical inclusion $\operatorname{Ker}(\epsilon_{C}) \subseteq C$, $s_{C[-1]}^{-1}$ and 
the canonical inclusion of $C[-1]$ inside $\tilde{\Omega}^{+}(C)$. 
The adjective for the twisting cochains $\tau_{A}$ and $\tau^{C}$ is justified. 
Indeed, given an augmented dg algebra $A$ and a coaugmented dg coalgebra $C$, if $\operatorname{Tw}(C,A)$ denote the set of twisting cochains from $C$ to $A$, 
we have canonical maps 
\begin{equation}
\label{eq:isostw}
     \operatorname{Hom}_{\text{aug-dg-alg}} (\Omega^{+}(C),A) \rightarrow \operatorname{Tw}(C,A) \leftarrow \operatorname{Hom}_{\text{coaug-dg-coalg}} (C,B^{+}(A)),     
\end{equation}
where the left map is given by $g \mapsto g \circ \tau^{C}$ and the right one by $f \mapsto \tau_{A} \circ f$, the first space of homomorphism denotes the set of morphisms of augmented dg algebras and the last one the set of morphisms of coaugmented dg coalgebras. 
It is clear that left map is a bijection, and the same holds for the right one provided $C$ is cocomplete (see \cite{LH}, Lemme 2.2.1.5). 
Moreover these maps are clearly seen to be natural. 

We also remark that the cobar constructions are functorial. 
Given $f : C \rightarrow C'$ a morphism of coaugmented dg coalgebras, the unique morphism of augmented graded algebras $\Omega^{+}(f) : \Omega^{+}(C) \rightarrow \Omega^{+}(C')$ 
whose restriction to $J_{C}[-1]$, where $J_{C}$ is the coaugmentation cokernel of $C$, is given by the composition of $- f_{[-1]}^{[-1]}$ with the canonical inclusion 
$J_{C'}[-1] \rightarrow \Omega^{+}(C')$, where $J_{C'}$ is the coaugmentation cokernel of $C'$, commutes with the differentials, so it gives a morphism of augmented dg algebras. 
It is given explicitly by $\Omega^{+}(f)(\cl{ c_{1} | \dots | c_{n}}) = \cl{ f(c_{1}) | \dots | f(c_{n})}$, for $n \in \NN_{0}$ and $c_{1}, \dots, c_{n} \in C$. 
As for the bar construction we stress that the minus sign in front of the $f_{[-1]}^{[-1]}$ was not really arbitrary, since a plus sign would have added a $(-1)^{n}$ sign to the previous expression of 
$\Omega^{+}(f)$, so in particular $\Omega^{+}(\mathrm{id}_{C})$ would be different from $\mathrm{id}_{\Omega^{+}(C)}$ and it would not be a functorial choice.  
The explicit expression of the corresponding morphism of augmented dg algebras $\tilde{\Omega}^{+}(f) : \tilde{\Omega}^{+}(C) \rightarrow \tilde{\Omega}^{+}(C')$ 
for the nonreduced cobar construction is the same as for the reduced one. 
Moreover, the quasi-isomorphisms $\tilde{\Omega}^{+}(C) \rightarrow \Omega^{+}(C)$ and $\Omega^{+}(C) \rightarrow \tilde{\Omega}^{+}(C)$ of augmented dg algebras described in the previous paragraph yield in fact natural transformations. 
Note that, if $f : C \rightarrow C'$ is a morphism of coaugmented dg coalgebras, then 
\begin{equation}
\label{eq:commcob}
 \tau^{C} \circ f = \Omega^{+}(f) \circ \tau^{C'}
\end{equation} 
and the same expression holds for the corresponding morphism $\tilde{\Omega}^{+}(f)$ by considering the unnormalized universal twisting cochains. 

We would like to note that, in contrast with the property of preservation of quasi-isomorphisms of the bar construction(s) for augmented dg algebras, it may occur that 
a morphism of coaugmented dg coalgebras $f : C \rightarrow C'$ is a quasi-isomorphism such that $\Omega^{+}(f)$ (or $\tilde{\Omega}^{+}(f)$) is not. 
A typical example of such a situation may be constructed as follows. 
We will also assume in the rest of this subsubsection that $k$ is a semisimple. 
Take $A$ a unitary dg algebra and consider the augmented dg algebra $A_{+}$ whose underlying dg $k$-module is given by $A \oplus k$, and 
with product $(a,\lambda) \cdot (a',\lambda') = (a a' + a \lambda' + \lambda a, \lambda \lambda')$, unit $\eta_{A_{+}}$ given the canonical inclusion $k \subseteq A_{+}$ of the form $\lambda \mapsto (0_{A}, \lambda)$ and augmentation $\epsilon_{A_{+}}$ given by the canonical projection $A_{+} \rightarrow k$. 
It is easy to check that $B^{+}(\eta_{A_{+}})$ and $B^{+}(\epsilon_{A_{+}})$ are in fact quasi-isomorphisms of coaugmented dg coalgebras, for the underlying dg $k$-module of $B^{+}(A_{+})$ 
is just $k \oplus \operatorname{Bar}(A)[2]$, but $\Omega^{+}(B^{+}(\eta_{A_{+}}))$ and $\Omega^{+}(B^{+}(\epsilon_{A_{+}}))$ are not, since $\Omega^{+}(B^{+}(A_{+}))$ is quasi-isomorphic to 
$A_{+}$. 
We remark however that if $C$ and $C'$ are cocomplete, and either $\Omega^{+}(f)$ or $\tilde{\Omega}^{+}(f)$ is a quasi-isomorphism, 
then $f$ is also (see for instance \cite{LH}, Lemme 1.3.2.2, and Lemme 1.3.2.3, (a) and (c). 
For this last item see also the corresponding errata). 
Following the terminology of K. Lef\`evre-Hasegawa (see \cite{LH}, the definitions before Thm. 1.2.1.2), we will say that morphism of coaugmented dg coalgebras $f : C \rightarrow C'$ is a 
\emph{weak equivalence} if $\Omega^{+}(f)$ (or, equivalently, $\tilde{\Omega}^{+}(f)$) is a quasi-isomorphism. 
There is a standard criterion for a morphism $f$ of cocomplete coaugmented dg coalgebras to be a weak equivalence, which we now recall. 
In order to do so, we need to introduce the following definitions. 
We say that a coaugmented dg coalgebra $C$ has an \emph{admissible filtration} (or that $C$ is \emph{admissibly filtered}), 
if there is an exhaustive increasing sequence $\{ C_{i} \}_{i \in \NN_{0}}$ of coaugmented dg subcoalgebras of $C$, such that $C_{0} = k.1_{C}$. 
The primitive filtration of a cocomplete coaugmented dg coalgebra $C$, given by $C_{i} = \operatorname{Ker}(\Delta_{J_{C}}^{(i+1)}) \oplus k.1_{C}$, if $i \in \NN$ and $C_{0} = k.1_{C}$, is admissible, 
by its very definition. 
It is clear in this case that it is in fact a filtration of $C$ by coaugmented dg coalgebras, which induces a filtration of augmented dg algebras on $\Omega^{+}(C)$, 
that induces in turn an admissible filtration on $B^{+}(\Omega^{+}(C))$, which is called the \emph{$C$-primitive filtration}. 
Given a coaugmented dg coalgebra $C$ with an admissible filtration, we may construct the \emph{associated graded object}, 
\[     \mathrm{Gr}_{C_{\bullet}}(C) = \bigoplus_{i \in \NN_{0}} C_{i}/C_{i-1},     \]
that may be in principle regarded as a graded $k$-module over the grading group $G \times \ZZ$, 
where the last factor comes from the index $i$ of the filtration, that will be called the \emph{filtration grading}. 
Now, we further provide with the unique differential induced by that of $C$ which preserves the filtration grading, and it thus becomes a dg module over $k$. 
Given two coaugmented dg coalgebras $C$ and $C'$ provided with admissible filtrations $\{ C_{i} \}_{i \in \NN_{0}}$ and $\{ C_{i}' \}_{i \in \NN_{0}}$, resp., 
a \emph{filtered morphism} is a morphism $f : C \rightarrow C'$ of coaugmented dg coalgebras such that $f(C_{i}) \subseteq C_{i}'$, for all $i \in \NN_{0}$. 
It further induces a unique morphism $\mathrm{Gr}(f) : \mathrm{Gr}_{C_{\bullet}}(C) \rightarrow \mathrm{Gr}_{C_{\bullet}'}(C')$ of dg $k$-modules preserving the filtration grading, 
called the \emph{associated graded morphism}.  
We say that $f$ is a \emph{filtered quasi-isomorphism} if the associated graded morphism $\mathrm{Gr}(f)$ is a quasi-isomorphism. 
Given a filtered morphism $f : C \rightarrow C'$ of coaugmented dg coalgebras provided with admissible filtrations, 
it can be proved that $f$ is a weak equivalence if it is a filtered quasi-isomorphism (see \cite{LH}, Lemme 1.3.2.2). 
Moreover, given a morphism $f : A \rightarrow A'$ of augmented dg algebras, $B^{+}(f)$ is a filtered morphism of coaugmented dg coalgebras, where $B^{+}(A)$ and $B^{+}(A')$ 
are provided with the primitive filtration (see \cite{LH}, Lemme 1.3.2.3, (a)). 
Note that these filtrations are admissible, for both $B^{+}(A)$ and $B^{+}(A')$ are cocomplete. 
The same comments apply to the nonreduced bar construction, and moreover the quasi-isomorphisms $\tilde{B}^{+}(A) \rightarrow B^{+}(A)$ and $B^{+}(A) \rightarrow \tilde{B}^{+}(A)$ 
described in the previous subsection are filtered for the primitive filtrations of the cocomplete dg coalgebras $\tilde{B}^{+}(A)$ and $B^{+}(A)$, so the former quasi-isomorphisms 
are in fact weak equivalences. 

Finally, we recall that the canonical map $\beta_{A} : \Omega^{+}(B^{+}(A)) \rightarrow A$ given by $\cl{} \mapsto 1_{A}$ and 
$\cl{\omega_{1} | \dots | \omega_{n}} \mapsto (-1)^{n} s^{-1}_{I_{A}}(\pi_{1}(\omega_{1})) \dots s^{-1}_{I_{A}}(\pi_{1}(\omega_{n}))$ if $n \in \NN$, where $\pi_{1} : B^{+}(A) \rightarrow I_{A}[1]$ is the canonical projection and $\omega_{1}, \dots, \omega_{n}$ are elements in the coaugmentation cokernel of $B^{+}(A)$, 
is a quasi-isomorphism of augmented dg algebras (see \cite{FHT01}, Section 19, Ex. 2, or \cite{HMS}, Section II.4, Thm. II.4.4, or \cite{Prou}, Th. 2.28). 
It is the unique morphism of augmented dg algebras satisfying that the composition of $\tau^{B^{+}(A)}$ with it is $\tau_{A}$. 
We have also the morphism of coaugmented dg coalgebras $\beta^{C} : C \rightarrow B^{+}(\Omega^{+}(C))$ given by the unique such morphism that its composition with 
$\tau_{\Omega^{+}(C)} : B^{+}(\Omega^{+}(C)) \rightarrow \Omega^{+}(C)$ is $\tau^{C}$. 
Hence $\beta^{C}$ sends $1_{C}$ to $1_{B^{+}(\Omega^{+}(C))}$, and for $c \in \operatorname{Ker}(\epsilon_{C})$, it satisfies that 
\[     \beta^{C}(c) = -[\cl{c}] + \sum_{n \in \NN_{\geq 2}} (-1)^{n} [\cl{c_{(1)}^{-}}| \dots | \cl{c_{(n)}^{-}}],     \]
where $\Delta_{\operatorname{Ker}(\epsilon_{C})}^{(n)} (c) = c_{(1)}^{-} \otimes \dots \otimes c_{(n)}^{-}$ is the iterated coproduct of the comultiplication indicated in the first paragraph of Subsection \ref{subsec:barchoch}. 
If $C$ is cocomplete, $\beta^{C}$ is a filtered quasi-isomorphism, where $C$ is provided with the primitive filtration and $B^{+}(\Omega^{+}(C))$ with the $C$-primitive one (see \cite{LH}, Lemme 1.3.2.3, (c), and the corresponding errata). 

The previous comments may be used in order to provide a simpler resolution than the bar resolution of an augmented dg algebra $A$, when $A = \Omega^{+}(C)$, 
where $C$ is a coaugmented dg coalgebra. 
In this case, we already know that the reduced bar resolution of $\Omega^{+}(C)$ is isomorphic to $\Omega^{+}(C)^{e} \otimes_{\tau_{\Omega^{+}(C)}} B^{+}(\Omega^{+}(C))$. 
However, since $\beta^{C} : C \rightarrow B^{+}(\Omega^{+}(C))$ is a quasi-isomorphism of coaugmented dg coalgebras which satisfies by definition that 
$\tau_{\Omega^{+}(C)} \circ \beta^{C} = \tau^{C}$, then 
\[     \mathrm{id}_{\Omega^{+}(C)^{e}} \otimes \beta^{C} : \Omega^{+}(C)^{e} \otimes_{\tau^{C}} C \rightarrow \Omega^{+}(C)^{e} \otimes_{\tau_{\Omega^{+}(C)}} B^{+}(\Omega^{+}(C))     \]
is a quasi-isomorphism (see \cite{LH}, Prop. 2.2.4.1). 
Furthermore, the map is clearly a morphism of dg bimodules over $\Omega^{+}(C)$, where the action by the last tensor factor is given by the inner structure of $\Omega^{+}(C)^{e}$. 
Using the identification $\Omega^{+}(C)^{e} \otimes_{\tau^{C}} C \rightarrow \Omega^{+}(C) \otimes C \otimes \Omega^{+}(C)$ of graded bimodules over $\Omega^{+}(C)$ given by 
$\omega' \otimes \omega \otimes c \mapsto (-1)^{\deg \omega' (\deg \omega + \deg c)} \omega \otimes c \otimes \omega'$, the differential of the latter space becomes 
\begin{equation*}
\begin{split}
     \omega \otimes c \otimes \omega' \mapsto &D(\omega) \otimes c \otimes \omega' + (-1)^{\deg \omega} \omega \otimes d_{C}(c) \otimes \omega' 
     \\ 
      &+ (-1)^{\deg \omega + \deg c} \omega \otimes c \otimes D(\omega') 
     \\
     &+ (-1)^{\deg \omega + \deg c_{(1)}} \omega \otimes c_{(1)} \otimes \tau^{C}(c_{(2)}) \omega'      
     \\
     &- (-1)^{\deg \omega} \omega \tau^{C}(c_{(1)}) \otimes c_{(2)} \otimes \omega',
\end{split}
\end{equation*}
where $\Delta_{C}(c) = c_{(1)} \otimes c_{(2)}$. 
Interestingly, a quasi-inverse for the previous map $\mathrm{id}_{\Omega^{+}(C)^{e}} \otimes \beta^{C}$ can be easily constructed (see \cite{VP}, Th\'eo. 1.4). 
Indeed, the morphism 
\[     \gamma^{C} : \overline{\operatorname{Bar}}(\Omega^{+}(C)) \rightarrow \Omega^{+}(C) \otimes C \otimes \Omega^{+}(C)     \]
of graded $\Omega^{+}(C)$-bimodules satisfying that $\gamma^{C}(\omega_{0} [] \omega_{1}) = \omega_{0} \otimes 1_{C} \otimes \omega_{1}$ and 
\[     \gamma^{C}(\omega_{0} [\omega_{1} | \dots | \omega_{m}] \omega_{m+1}) 
              = \delta_{m,1} \sum_{j=1}^{n} (-1)^{\varepsilon_{j}+1} \omega_{0} \cl{c_{1} | \dots | c_{j-1}} \otimes c_{j} \otimes \cl{c_{j+1} | \dots | c_{n}} \omega_{2}     \]
if $m \in \NN$, where $\omega_{1} = \cl{c_{1} | \dots | c_{n}}$ and $\varepsilon_{j} = (\sum_{l=1}^{j-1} \deg c_{l}) + j - 1$, 
is in fact a morphism of dg bimodules over $\Omega^{+}(C)$, and it satisfies that 
it is a left inverse of $\mathrm{id}_{\Omega^{+}(C)^{e}} \otimes \beta^{C}$ (after using the canonical identifications explained before). 

\subsubsection{Application to the Hochschild (co)homology of Koszul algebras}
\label{subsubsec:applkos}

We want to stress that the description of Hochschild (co)homology in Subsubsection \ref{subsubsec:bar} together with Fact \ref{fact:hh} in Subsubsection \ref{subsubsec:hochschildtwisted} 
allow us to compute the augmented graded algebra structure on the Hochschild cohomology $HH^{\bullet}(A)$ and the corresponding graded bimodule structure 
on the Hochschild homology $HH_{\bullet}(A)$ using different coaugmented dg coalgebras. 
More precisely, assuming that $k$ is semisimple, if $C$ is a coaugmented dg coalgebra provided with a weak equivalence of coaugmented dg coalgebras $f' : C \rightarrow B^{+}(A)$, 
then the map $\mathcal{H}om(f',A)$ gives a quasi-isomorphism of augmented dg algebras from $\mathcal{H}om^{\tau_{A}} (B^{+}(A),A)$ to $\mathcal{H}om^{\tau_{A} \circ f'} (C,A)$, 
which in turn induces an isomorphism of augmented graded algebras from $HH^{\bullet}(A)$ to $H^{\bullet}(\mathcal{H}om^{\tau_{A} \circ f'} (C,A))$. 
We remark that the map $\mathcal{H}om(f',A)$ is a quasi-isomorphism, for it coincides with the map given by applying the functor $\mathcal{H}om_{A^{e}}(\place,A)$ to the quasi-isomorphism 
$\mathrm{id}_{A^{e}} \otimes f' : A^{e} \otimes_{\tau_{A} \circ f'} C \rightarrow A^{e} \otimes_{\tau_{A}} B^{+}(A)$ of dg $A$-bimodules, and using the obvious identifications explained in the last paragraph of 
Subsection \ref{subsec:barrhoch}. 
The quasi-isomorphism property of the stated map of dg $A$-bimodules follows from the equivalence between items (c) and (f) in \cite{LH}, Prop. 2.2.4.1, taking into account that item (f) holds trivially for the map $g_{\tau}$ in that statement is just our $f'$.
Analogously, under the same assumptions we have a quasi-isomorphism of dg bimodules over $\mathcal{H}om^{\tau_{A}} (B^{+}(A),A)$ from $A \otimes_{\tau_{A} \circ f'} C$ to $A \otimes_{\tau_{A}} B^{+}(A)$ defined by $\mathrm{id}_{A} \otimes f'$, where $A \otimes_{\tau_{A} \circ f'} C$ has a structure of dg bimodule over $\mathcal{H}om^{\tau_{A}} (B^{+}(A),A)$ 
by means of the morphism of dg algebras $\mathcal{H}om(f',A)$. 
It is indeed a quasi-isomorphism, because it coincides with the map given by applying the functor $A \otimes_{A^{e}} (\place)$ to the quasi-isomorphism 
$\mathrm{id}_{A^{e}} \otimes f' : A^{e} \otimes_{\tau_{A} \circ f'} C \rightarrow A^{e} \otimes_{\tau_{A}} B^{+}(A)$ of dg $A$-bimodules, and using the obvious identifications explained in the penultimate paragraph of 
Subsection \ref{subsec:barrhoch}. 
Hence, we get an isomorphism of graded bimodules over $HH^{\bullet}(A)$ from $H_{\bullet}(A \otimes_{\tau_{A} \circ f'} C)$ to $HH_{\bullet}(A)$, 
where the structure of graded bimodule over $HH^{\bullet}(A)$ of $H_{\bullet}(A \otimes_{\tau_{A} \circ f'} C)$ is given by the previous isomorphism 
$HH^{\bullet}(A) \rightarrow H^{\bullet}(\mathcal{H}om^{\tau_{A} \circ f'} (C,A))$. 

We have thus proved the following result.
\begin{theorem}
\label{theorem:koszul}
Let $A$ be an augmented dg algebra over a field $k$ and let $C$ be a coaugmented dg coalgebra 
provided with a weak equivalence of coaugmented dg coalgebras $f' : C \rightarrow B^{+}(A)$. 
Then, the map $\mathcal{H}om(f',A)$ gives a quasi-isomorphism of augmented dg algebras from $\mathcal{H}om^{\tau_{A}} (B^{+}(A),A)$ to $\mathcal{H}om^{\tau_{A} \circ f'} (C,A)$. 
Moreover, $\mathrm{id}_{A} \otimes f'$ is a quasi-isomorphism of dg bimodules over $\mathcal{H}om^{\tau_{A}} (B^{+}(A),A)$ from $A \otimes_{\tau_{A} \circ f'} C$ to $A \otimes_{\tau_{A}} B^{+}(A)$, where $A \otimes_{\tau_{A} \circ f'} C$ has a structure of dg bimodule over $\mathcal{H}om^{\tau_{A}} (B^{+}(A),A)$ by means of the morphism of dg algebras $\mathcal{H}om(f',A)$. 
\end{theorem}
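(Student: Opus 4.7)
The plan is to deduce both claims from a single quasi-isomorphism at the level of dg $A$-bimodules by applying in turn the two functors $\mathcal{H}om_{A^{e}}(\place,A)$ and $A\otimes_{A^{e}}(\place)$. The structural parts of the theorem — that $\mathcal{H}om(f',A)$ is a morphism of augmented dg algebras from $\mathcal{H}om^{\tau_{A}}(B^{+}(A),A)$ to $\mathcal{H}om^{\tau_{A}\circ f'}(C,A)$, and that $\mathrm{id}_{A}\otimes f'$ is a morphism of dg bimodules over $\mathcal{H}om^{\tau_{A}}(B^{+}(A),A)$ via the restriction of scalars through $\mathcal{H}om(f',A)$ — would follow directly from the naturality of the twisted convolution algebra and twisted tensor product constructions recalled in Subsubsection \ref{subsubsec:twistedconvo}, together with the obvious fact that $\mathcal{H}om(f',\mathrm{id}_{A})$ sends the twisting cochain $\tau_{A}$ to $\tau_{A}\circ f'$.

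The crux is the quasi-isomorphism property. My plan is to produce first the common map
\[
\mathrm{id}_{A^{e}}\otimes f'\colon A^{e}\otimes_{\tau_{A}\circ f'} C\longrightarrow A^{e}\otimes_{\tau_{A}} B^{+}(A),
\]
viewed as a morphism of dg $A$-bimodules through the inner structure of $A^{e}$, as explained in the last paragraph of Subsubsection \ref{subsubsec:hochschildtwisted}. Under the canonical identifications recalled there, the target is the reduced bar resolution $\overline{\operatorname{Bar}}(A)$. Assuming this map is a quasi-isomorphism, one recovers the first claim by applying $\mathcal{H}om_{A^{e}}(\place,A)$ and using the natural identification
\[
\mathcal{H}om_{A^{e}}(A^{e}\otimes_{\tau} X,A)\simeq \mathcal{H}om^{\tau}(X,A),
\]
while the second claim is recovered by applying $A\otimes_{A^{e}}(\place)$ and using the analogous identification for the Hochschild chain complex. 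Since the source and target are twisted tensor products of semi-free type in the needed sense, these functors do not introduce homological obstructions, so quasi-isomorphisms are preserved.

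It therefore remains to verify that $\mathrm{id}_{A^{e}}\otimes f'$ is indeed a quasi-isomorphism of twisted tensor products, and this is where the weak-equivalence hypothesis on $f'$ enters. I would invoke the equivalence between items (c) and (f) in \cite{LH}, Prop. 2.2.4.1, applied with the twisting cochain $\tau = \tau_{A}\circ f'$ and with $f'$ itself playing the role of the comparison map $g_{\tau}$ in that statement. Condition (f) then amounts exactly to $\Omega^{+}(f')$ being a quasi-isomorphism, which is the definition of weak equivalence adopted in Subsubsection \ref{subsubsec:cobar}.

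The main obstacle I foresee is purely bookkeeping: one has to carefully match the inner/outer bimodule structures on $A^{e}$, identify our $f'$ with the comparison map appearing in \cite{LH}, Prop. 2.2.4.1, and keep track of the sign conventions linking the various twisting cochains $\tau_{A}$, $\tau^{C}$ and $\tau_{A}\circ f'$. Once these identifications are fixed, the whole argument reduces to invoking that single proposition and the naturality of $\mathcal{H}om_{A^{e}}(\place,A)$ and $A\otimes_{A^{e}}(\place)$ on the common quasi-isomorphism above.
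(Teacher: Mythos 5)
Your proposal is correct and follows essentially the same route as the paper: the paper likewise reduces both claims to the single quasi-isomorphism $\mathrm{id}_{A^{e}}\otimes f'\colon A^{e}\otimes_{\tau_{A}\circ f'}C\rightarrow A^{e}\otimes_{\tau_{A}}B^{+}(A)$ of dg $A$-bimodules, establishes it via the equivalence of items (c) and (f) in \cite{LH}, Prop. 2.2.4.1 (with $f'$ playing the role of $g_{\tau}$), and then applies $\mathcal{H}om_{A^{e}}(\place,A)$ and $A\otimes_{A^{e}}(\place)$ together with the canonical identifications. No gap to report.
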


This can be applied to the particular case when $A$ is for instance a (quadratic) Koszul algebra over a field $k$, and taking $C = \operatorname{Tor}^{A}_{\bullet}(k,k)$ a coaugmented dg coalgebra with zero differential, where the cohomological degree of the component $\operatorname{Tor}^{A}_{i}(k,k)$ is $-i$ (see for instance \cite{PP}, Ch. 1, Section 1, pp. 4--5).
The structure of this coaugmented dg coalgebra $C$ and its quasi-isomorphism to the bar construction $B^{+}(A)$ is well-known and lies in the heart of the Koszul property.   
We recall that if $A= TV/\cl{R}$ is a (quadratic) Koszul algebra, then we have the isomorphisms of Adams graded vector spaces
$\operatorname{Tor}^{A}_{0}(k,k) \simeq k$, $\operatorname{Tor}^{A}_{1}(k,k) \simeq V$, and 
\[     \operatorname{Tor}^{A}_{i}(k,k) \simeq \bigcap_{j=0}^{i-2} V^{\otimes j} \otimes R \otimes V^{\otimes (i-j)},     \]
for $i \in \NN_{\geq 2}$. 
Let us denote the latter intersection space by $J_{i}$, if $i \in \NN_{\geq 2}$, and set $J_{1} =V$ and $J_{0} = k$. 
Under this identification the coproduct $\Delta$ is defined as follows. 
The composition of the restriction of $\Delta$ to $J_{i}$ with the canonical projection onto $J_{i'} \otimes J_{i''}$, where $i = i' + i''$, is given by the canonical inclusion $J_{i} \subseteq J_{i'} \otimes J_{i''}$ of Adams graded vector subspaces of $TV$. 
The counit is given the canonical projection of $\operatorname{Tor}^{A}_{\bullet}(k,k)$ onto $\operatorname{Tor}^{A}_{0}(k,k) \simeq k$, and the cougmentation by the usual inclusion 
of $k \simeq \operatorname{Tor}^{A}_{0}(k,k)$ inside $\operatorname{Tor}^{A}_{\bullet}(k,k)$. 
We recall that the canonical map $f' : C \rightarrow B^{+}(A)$ induced by the inclusions 
\[     J_{i} \subseteq V^{\otimes i} \rightarrow I_{A}[1]^{\otimes i},     \]
for $i \in \NN_{0}$, and the previous identifications, is a quasi-isomorphism of coaugmented dg algebras (this is in fact equivalent to the Koszul property on $A$). 
In this case it is trivial to verify that the twisting cochain $\tau_{A} \circ f'$ is given by the map whose restriction to $V$ is minus the canonical inclusion $V \rightarrow A$, and the restriction to $V^{\otimes i}$ vanishes for $i \in \NN_{0} \setminus \{ 1 \}$. 
As a consequence, we see that it is possible to compute the algebra structure of the Hochschild cohomology $HH^{\bullet}(A)$ for a Koszul algebra $A$ by calculating $H^{\bullet}(\mathcal{H}om^{\tau_{A} \circ f'} (C,A))$ and the graded bimodule of the Hochschild homology $HH_{\bullet}(A)$ over the cohomology $HH^{\bullet}(A)$ 
by means of $H_{\bullet}(A \otimes_{\tau_{A} \circ f'} C)$. 
The former result on Hochschild cohomology gives precisely the algebraic description of the cup product for the Hochschild cohomology of a Koszul algebra given by \cite{BGSS}. 
Indeed, the main result of the mentioned article (stated at the introduction, p. 443, or after as Theorem 2.3) is just the (basis-dependent expression of the) direct consequence of the 
complete knowledge (which was already well-known) of the coaugmented graded coalgebra structure on the $\operatorname{Tor}_{\bullet}^{A}(k,k)$ of a Koszul algebra 
and the comments of the previous paragraph. 
We will later provide a generalization of the results explained here, since in general the requirement of having a (manageable) coaugmented dg coalgebra $C$ weak equivalent to $B^{+}(A)$ is somehow narrow (see Theorem \ref{theorem:final}). 

\subsubsection{Usual duality between the bar and cobar construction}
\label{subsubsec:barcobar}

We will assume in this subsubsection that $k$ is a field. 
A graded or dg vector space $M$ is \emph{locally finite dimensional} if each of the homogeneous components of $M$ is finite dimensional. 
Note that in this case the graded dual $M^{\#}$ is also locally finite dimensional and the canonical map $\iota_{M} : M \rightarrow (M^{\#})^{\#}$ given by 
$\iota_{M}(m)(f) = (-1)^{\deg m \deg f} f(m)$, for $m \in M$ and $f \in M^{\#}$ homogeneous, is an isomorphism of graded or dg vector spaces.
Furthermore, if $M$ and $N$ are locally finite dimensional, then $\iota_{M,N}$ is an isomorphism in the corresponding category.

We now recall the well-known fact that if $C$ is a coaugmented dg coalgebra, the graded dual $C^{\#}$ has a structure of augmented dg algebra, where the product is given by $\Delta_{C}^{\#} \circ \iota_{C,C}$, unit $\epsilon_{C}$ and augmentation given by $\omega \mapsto \omega(\eta_{C}(1_{k}))$, for $\omega \in C^{\#}$. 
Conversely if $A$ is a locally finite dimensional augmented dg algebra, then the graded dual $A^{\#}$ has a structure of (locally finite dimensional) coaugmented dg coalgebra, 
where the product is given by $\iota_{A,A}^{-1} \circ \mu_{A}^{\#}$, counit given by $\omega \mapsto \omega(\eta_{A}(1_{k}))$, for $\omega \in A^{\#}$, 
and coaugmentation given by $1_{k} \mapsto \epsilon_{A}$.  
Note that in this latter case the morphism $\iota_{A} : A \rightarrow (A^{\#})^{\#}$ is in fact an isomorphism of augmented dg algebras. 
Analogously, if $C$ is a locally finite dimensional coaugmented dg coalgebra, then $\iota_{C} : C \rightarrow (C^{\#})^{\#}$ is also an isomorphism of coaugmented dg coalgebras. 

The main duality properties we shall use between the bar and cobar constructions are the following ones. 
We shall state them for the reduced bar and cobar construction, though the exact same results \textit{mutatis mutandi} are obtained for the nonreduced ones. 
In particular, all the analogous morphisms for the nonreduced cases will be denoted just adding a tilde over the corresponding nonreduced ones that will be explicitly stated 
in the rest of the subsubsection. 
If $\Lambda$ is locally finite dimensional augmented dg algebra such that $B^{+}(\Lambda)$ is also locally finite dimensional, 
then $B^{+}(\Lambda)^{\#}$ is canonically isomorphic to $\Omega^{+}(\Lambda^{\#})$, as augmented dg algebras (see \cite{LPWZ04}, Lemma 8.6, (c), 
where, using the notation of that article, one should further impose that $\Omega C$ and $BA$ are locally finite dimensional. 
The same correction would apply to \cite{LPWZ08}, Lemma 1.15. 
They are more or less a consequence of \cite{FHT01}, Section 19, Ex. 3, p. 272). 
The isomorphism $j_{\Lambda} : \Omega^{+}(\Lambda^{\#}) \rightarrow B^{+}(\Lambda)^{\#}$ is the unique one satisfying that its restriction to the coaugmentation cokernel 
$J_{\lambda^{\#}} \simeq I_{\Lambda}^{\#}$ of $\Lambda^{\#}$ is minus the graded dual of the canonical projection $B^{+}(\Lambda) \rightarrow I_{\Lambda}[1]$, 
where $I_{\Lambda}$ is the augmentation ideal of $\Lambda$ (using the identification $I_{\Lambda}^{\#} \simeq J_{\lambda^{\#}}$ induced by the graded dual of the inclusion 
$I_{\Lambda} \subseteq \Lambda$, 
and the isomorphism $H_{\Lambda,k,(1,0_{G'}),0_{G}} : (\Lambda^{\#})[-1] \rightarrow (\Lambda[1])^{\#}$ of dg modules explained in the sixth paragraph of Subsection \ref{subsec:int}), 
and it may be explicitly given as follows. 
We remark that the choice of signs is exactly the one in order to make our map commute with the differentials. 
We will provide an explicit expression of this isomorphism. 
For $n \in \NN$ and $\omega_{1}, \dots, \omega_{n} \in I_{\Lambda}^{\#}$ the morphism sends $\cl{\omega_{1} | \dots | \omega_{n}}$ to the linear functional 
\[     [\lambda_{1} | \dots | \lambda_{m}] \mapsto (-1)^{\epsilon} \delta_{n,m} \omega_{1}(\lambda_{1}) \dots \omega_{n} (\lambda_{n}),     \]
where $\lambda_{1}, \dots, \lambda_{m} \in \Lambda$, $\delta_{n,m}$ is the Dirac delta sign, 
\begin{align*}     
\epsilon = &\deg \omega_{1} + \dots + \deg \omega_{n} + n + (\deg \omega_{2} + 1) (\deg \lambda_{1} + 1)  
\\
&+ \dots + (\deg \omega_{n} + 1 ) (\deg \lambda_{1} + \dots + \deg \lambda_{n-1} + n - 1),     
\end{align*} 
and it sends $1 \in \Omega^{+}(\Lambda^{\#})$ to the canonical projection $B^{+}(\Lambda) \rightarrow k$. 
We remark that the previous morphism is in fact a natural isomorphism between the functors $\Omega^{+}((\place)^{\#})$ and $B^{+}(\place)^{\#}$, 
\textit{i.e.} if $f : \Lambda \rightarrow \Lambda'$ is a morphism of augmented dg algebras, 
then it can be directly verified from the explicit expression of the morphisms involved that we have the commutative diagram 
\[ 
\xymatrix
{
\Omega^{+}((\Lambda')^{\#})
\ar[r]^{j_{\Lambda'}}
\ar[d]^{\Omega^{+}(f^{\#})}
&
B^{+}(\Lambda')^{\#}
\ar[d]^{B^{+}(f)^{\#}}
\\
\Omega^{+}(\Lambda^{\#})
\ar[r]^{j_{\Lambda}}
&
B^{+}(\Lambda)^{\#}
}
\] 
Analogously, if $D$ is locally finite dimensional coaugmented dg coalgebra such that $\Omega^{+}(D)$ is also locally finite dimensional, 
then $B^{+}(D^{\#})$ is canonically isomorphic to $\Omega^{+}(D)^{\#}$, as coaugmented dg coalgebras (see \cite{LPWZ04}, Lemma 8.6, (c), 
where the analogous corrections to the ones indicated before apply). 
Indeed, the isomorphism $j^{D} : B^{+}(D^{\#}) \rightarrow \Omega^{+}(D)^{\#}$ may be explicitly given as follows. 
For $n \in \NN$ and $\rho_{1}, \dots, \rho_{n} \in I_{D^{\#}}$ the morphism sends $[ \rho_{1} | \dots | \rho_{n} ]$ to the linear functional 
\[     \cl{\theta_{1} | \dots | \theta_{m}} \mapsto (-1)^{\epsilon} \delta_{n,m} \rho_{1}(\theta_{1}) \dots \rho_{n} (\theta_{n}),     \]
where $\theta_{1}, \dots, \theta_{m} \in D$, $\delta_{n,m}$ is the Dirac delta sign, 
\begin{align*}  
     \epsilon = &\deg \rho_{1} + \dots + \deg \rho_{n} + (\deg \rho_{2} + 1) (\deg \theta_{1} + 1)  
     \\
     &+ \dots + (\deg \rho_{n} + 1) (\deg \theta_{1} + \dots + \deg \theta_{n-1} + n - 1),
\end{align*}      
and it sends $1 \in B^{+}(D^{\#})$ to the canonical projection $\Omega^{+}(D) \rightarrow k$. 
Again, if $f : D \rightarrow D'$ is a morphism of coaugmented dg coalgebras, 
then it can be directly verified from the explicit expression of the morphisms involved that we have the commutative diagram 
\[ 
\xymatrix
{
B^{+}((D')^{\#})
\ar[r]^{j^{D'}}
\ar[d]^{B^{+}(f^{\#})}
&
\Omega^{+}(D')^{\#}
\ar[d]^{\Omega^{+}(f)^{\#}}
\\
B^{+}(D^{\#})
\ar[r]^{j^{D}}
&
\Omega^{+}(D)^{\#}
}
\] 
Moreover, a straighforward computation shows that 
\begin{equation}
\label{eq:impdual1}
\begin{split}
   j^{D} &= \Omega^{+}(\iota_{D})^{\#} \circ (j_{D^{\#}})^{\#} \circ \iota_{B^{+}(D^{\#})},
  \\
  j_{\Lambda} &= B^{+}(\iota_{\Lambda})^{\#} \circ (j^{\Lambda^{\#}})^{\#} \circ \iota_{\Omega^{+}(\Lambda^{\#})}.
\end{split}
\end{equation} 
Using the easy fact that $\iota_{M}^{\#} \circ \iota_{M^{\#}} = \mathrm{id}_{M^{\#}}$, for any dg $k$-module $M$, these identities imply that 
\begin{equation}
\label{eq:impdual2}
\begin{split}
   (j^{D})^{\#} \circ \iota_{\Omega^{+}(D)} &= j_{D^{\#}} \circ \Omega^{+}(\iota_{D}),
  \\
  (j_{\Lambda})^{\#} \circ \iota_{B^{+}(\Lambda)} &= j^{\Lambda^{\#}} \circ B^{+}(\iota_{D}).
\end{split}
\end{equation} 

On the other hand, it is trivial to verify from the expressions of the morphisms involved that 
\begin{equation}
\label{eq:candual1}
\begin{split}
   \tau_{D^{\#}} &= (\tau^{D})^{\#} \circ j^{D},
  \\
   \tau_{\Lambda}^{\#} &= j_{\Lambda} \circ \tau^{\Lambda^{\#}}.
\end{split}
\end{equation} 
These equations in turn imply the following identities 
\begin{equation}
\label{eq:candual2}
\begin{split}
   \beta_{D^{\#}} &= (\beta^{D})^{\#} \circ j_{\Omega^{+}(D)} \circ \Omega^{+}(j^{D}),
   \\
   \beta_{\Lambda}^{\#} &= j^{B^{+}(\Lambda)} \circ B^{+}(j_{\Lambda}) \circ \beta^{\Lambda^{\#}}.
\end{split}
\end{equation} 
Indeed, let us show how to prove the second one, for the first one is analogous.
We first note that, since $\beta_{\Lambda}$ is the unique morphism of augmented dg algebras such that the composition of $\tau^{B^{+}(\Lambda)}$ with it is $\tau_{\Lambda}$, by taking duals 
$\beta_{\Lambda}^{\#}$ is the unique morphism of coaugmented dg coalgebras such that its composition with $(\tau^{B^{+}(\Lambda)})^{\#}$ is $\tau_{\Lambda}^{\#}$. 
It thus suffices to prove that the composition of the right member with $(\tau^{B^{+}(\Lambda)})^{\#}$ is $\tau_{\Lambda}^{\#}$. 
By the first identity of \eqref{eq:candual1} for $D = B^{+}(\Lambda)$ we get that this composition is $\tau_{B^{+}(\Lambda)^{\#}} \circ B^{+}(j_{\Lambda}) \circ \beta^{\Lambda^{\#}}$. 
Now, \eqref{eq:commb} for the morphism $f = j_{\Lambda}$ tells us that the latter composition coincides with $j_{\Lambda} \circ \tau_{\Omega^{+}(\Lambda^{\#})} \circ \beta^{\Lambda^{\#}}$, 
which is equal to $j_{\Lambda} \circ \tau^{\Lambda^{\#}}$. 
The second identity of \eqref{eq:candual1} gives the claim. 

We remark that under these assumptions a quasi-isomorphism of coaugmented dg coalgebras is a weak equivalence (the converse is always true), 
for a quasi-isomorphism $C \rightarrow D$ induces a quasi-isomorphism of augmented dg algebras $D^{\#} \rightarrow C^{\#}$, which induces a quasi-isomorphisms between the bar constructions 
$B^{+}(D^{\#}) \rightarrow B^{+}(C^{\#})$, and by the previously recalled isomorphism we get a quasi-isomorphism $\Omega^{+}(D)^{\#} \rightarrow \Omega^{+}(C)^{\#}$ of coaugmented dg coalgebras. 
Taking duals again we obtain a quasi-isomorphism $\Omega^{+}(C) \rightarrow \Omega^{+}(D)$ of augmented dg algebras, so a weak equivalence $C \rightarrow D$. 

\subsubsection{Gerstenhaber brackets}
\label{subsubsec:ger}

We recall that given an augmented dg algebra $A$, the graded module $\operatorname{Der}(A)$ over $k$ of \emph{derivations} of $A$ is the graded submodule of $\mathcal{H}om(A,A)$ 
given by sums of homogeneous maps $d : A \rightarrow A$ satisfying that $\mu_{A} \circ (d \otimes \mathrm{id}_{A} + \mathrm{id}_{A} \otimes d) = d \circ \mu_{A}$ and $d(1_{A}) = 0$.  
Analogously, given a coaugmented dg coalgebra $C$, the graded module $\operatorname{Coder}(C)$ over $k$ of \emph{coderivations} of $C$ is the graded submodule of $\mathcal{H}om(C,C)$ 
given by sums of homogeneous maps $d : C \rightarrow C$ satisfying that $(d \otimes \mathrm{id}_{C} + \mathrm{id}_{C} \otimes d) \circ \Delta_{C} = \Delta_{C} \circ d$ 
and $\epsilon_{C} \circ d = 0$. 
Note that, by composing the first of the previous identities with $\eta_{C}$, we get that $d \circ \eta_{C} (1_{k})$ is a \textit{primitive} element of $C$, 
\textit{i.e.} an element $c \in C$ satisfying that $\Delta_{C}(C) = 1_{C} \otimes c + c \otimes 1_{C}$, where we recall that $1_{C} = \eta_{C}(1_{k})$. 
In the particular case of a coaugmented dg coalgebra $C$ whose underlying coaugmented graded coalgebra structure is a coaugmented tensor coalgebra 
on a graded module $V$, which is usually denoted by $T^{c}V$, we note that the primitive elements are given by $V \subseteq T^{c}V$. 
Note that both $\operatorname{Der}(A)$ and $\operatorname{Coder}(C)$ are graded Lie algebras with the bracket given by the graded commutator.

As noted by E. Getzler in \cite{Ge}, Prop. 1.3., for an augmented dg algebra $A$, the graded module $\mathcal{H}om(\tilde{B}^{+}(A),A)[1] \simeq \mathcal{H}om(\tilde{B}^{+}(A),A[1])$ over $k$ is isomorphic to the graded module $\operatorname{Coder}(\tilde{B}^{+}(A))$ of coderivations of the coaugmented dg coalgebra $\tilde{B}^{+}(A)$. 
Indeed, the isomorphism $\delta_{A}$ sends $s \phi$, where $\phi \in \mathcal{H}om(\tilde{B}^{+}(A),A)$ to the coderivation $\delta_{A}(s \phi)$ satisfying that 
$\delta_{A}(s \phi)(1_{\tilde{B}^{+}(A)}) = s_{A} (\phi(1_{\tilde{B}^{+}(A)}))$ and, for $n \in \NN$, $\pi_{j} (\delta_{A}(s \phi)([a_{1} | \dots | a_{n}]))$ is given by 
\begin{align*}
&\sum\limits_{i=1}^{j} (-1)^{(\deg \phi - 1)\epsilon_{i}} [a_{1} | \dots | a_{i-1} | \phi([a_{i}| \dots | a_{i+n-j}]) | a_{i+n-j+1} | \dots | a_{n}], &\text{if $1 \leq j \leq n$,} 
\\
&\sum\limits_{i=1}^{n+1} (-1)^{(\deg \phi - 1)\epsilon_{i}} [a_{1} | \dots | a_{i-1} | \phi(1_{B^{+}(A)}) | a_{i} | \dots | a_{n}],  &\text{if $j = n+1$,} 
\\
&0, &\text{if $j > n+1$,}                                                                                          
\end{align*}
where $\epsilon_{i} = (\sum_{j=1}^{i-1} \deg a_{j}) - i +1$ and $\pi_{j} : \tilde{B}^{+}(A) \rightarrow A[1]^{\otimes j}$ is the canonical projection. 
It is rather usual to provide the explicit expression of $\pi_{j} (\delta_{A}(\phi)([a_{1} | \dots | a_{n}]))$ only by the first of the previous lines, for the others are easily obtained 
as a typical abuse of (or ``extended'') notation. 
The inverse of $\delta_{A}$ is given by sending a coderivation $d \in \operatorname{Coder}(\tilde{B}^{+}(A))$ to the morphism $s_{A}^{-1} \circ \pi_{1} \circ d$. 

Also, given a coaugmented dg coalgebra $C$ we have a canonical isomorphism between the graded module 
$\mathcal{H}om(C,\tilde{\Omega}^{+}(C))[1] \simeq \mathcal{H}om(C[-1],\tilde{\Omega}^{+}(C))$ over $k$ and the graded module $\operatorname{Der}(\tilde{\Omega}^{+}(C))$ of derivations 
of the augmented dg algebra $\tilde{\Omega}^{+}(C)$. 
Indeed, the isomorphism $\delta^{C}$ sends $s \psi$, for $\psi \in \mathcal{H}om^{\tilde{\tau}_{A}}(C, \tilde{\Omega}^{+}(C))$ to the derivation $\delta^{C}(s \psi)$ satisfying that 
$\delta^{C}(s\psi)(1_{\tilde{\Omega}^{+}(C)}) = \psi (1_{A})$ and, for $n \in \NN$,  is given by 
\[     \delta^{C}(s \psi)(\cl{c_{1} | \dots | c_{n}}) = \sum\limits_{i=1}^{n+1} (-1)^{(\deg \psi - 1) (\bar{\epsilon}_{i} + 1))} \cl{c_{1} | \dots | c_{i-1}} \psi(c_{i}) \cl{c_{i+1} | \dots | c_{n}},     \]
where $\bar{\epsilon}_{i} = (\sum_{j=1}^{i-1} \deg c_{j}) - i +1$. 
The inverse is given by sending a derivation $d \in \operatorname{Der}(\tilde{\Omega}^{+}(C))$ to the morphism $(-1)^{\deg d} d|_{C[-1]} \circ s_{C[-1]}^{-1}$. 

From the previous identifications one obtains graded Lie algebra structures on both $\mathcal{H}om(\tilde{B}^{+}(A),A)[1]$ and $\mathcal{H}om(C,\tilde{\Omega}^{+}(C))[1]$. 
These bracket will be called the \emph{Gerstenhaber brackets}, for the first of these was introduce by M. Gerstenhaber in the seminal work \cite{Ger}. 
More explicitly, for $\phi \in \mathcal{H}om(A[1]^{\otimes n},A)$ and $\phi' \in \mathcal{H}om(A[1]^{\otimes m},A)$, where $n, m \in \NN_{0}$, the Gerstenhaber bracket is given by 
the element $[\phi , \phi'] \in \mathcal{H}om(A[1]^{\otimes (n+m-1)},A)$ sending $[a_{1} | \dots | a_{n+m-1}]$ to  
\begin{equation}
\label{eq:gerbra}
\begin{split}
    &\sum_{i=0}^{n-1} (-1)^{(\deg \phi' -1) \epsilon_{i+1}} \phi([a_{1} | \dots | a_{i} | \phi'([a_{i+1} | \dots | a_{i+m}]) | a_{i+m+1} | \dots | a_{n+m-1}]) 
    \\
    &- \sum_{i=0}^{m-1} (-1)^{\epsilon_{i}'} \phi'([a_{1} | \dots | a_{i} | \phi([a_{i+1} | \dots | a_{i+n}]) | a_{i+n+1} | \dots | a_{n+m-1}]),     
\end{split}
\end{equation}
where $\epsilon_{i+1} = (\sum_{j=1}^{i} \deg a_{j}) - i$ and $\epsilon_{i}' = (\deg \phi -1) (\epsilon_{i+1} + \deg \phi' -1)$ (see \cite{T}, Subsection 2.2). 
Note that the notation implies that if $n = 0$ the left sum of the right member vanishes and we should consider $\phi(1_{\tilde{B}^{+}(A)})$ in the right one, 
whereas if $m = 0$ the right sum is zero and we should have $\phi'(1_{\tilde{B}^{+}(A)})$ in the left one. 
It is straightforward to prove that the differential of $\mathcal{H}om^{\tilde{\tau}_{A}}(\tilde{B}^{+}(A),A)$ is in fact given by $\phi \mapsto [s_{A}^{-1} \circ \pi_{1} \circ \tilde{B} , \phi]$. 
This in turn implies that $\mathcal{H}om^{\tilde{\tau}_{A}}(\tilde{B}^{+}(A),A)[1]$ is in fact a dg Lie algebra. 
Moreover, the expression for the Gerstenhaber bracket \eqref{eq:gerbra} may be also applied to elements of $\mathcal{H}om(B^{+}(A),A)[1]$, which also becomes a dg Lie algebra such 
that the quasi-isomorphisms $\mathcal{H}om^{\tilde{\tau}_{A}}(\tilde{B}^{+}(A),A) \rightarrow \mathcal{H}om^{\tau_{A}}(B^{+}(A),A)$  and 
$\mathcal{H}om^{\tau_{A}}(B^{+}(A),A) \rightarrow \mathcal{H}om^{\tilde{\tau}_{A}}(\tilde{B}^{+}(A),A)$ recalled in the first paragraph of Subsubsection \ref{subsubsec:hochschildtwisted} 
induce in fact morphisms of dg Lie algebras. 

Analogously, let $\psi \in \mathcal{H}om(C, C[-1]^{\otimes n})$ and $\psi' \in \mathcal{H}om(C, C[-1]^{\otimes m})$, where $n, m \in \NN_{0}$. 
We will use the (Sweedler-alike) notation $\psi(c) = \cl{c^{\psi}_{(1)} | \dots | c^{\psi}_{(n)}}$ and $\psi'(c) = \cl{c^{\psi'}_{(1)} | \dots | c^{\psi'}_{(m)}}$, for $c \in C$, 
where the sum is omitted for simplicity. 
Then, the Gerstenhaber bracket is given by the element $[\psi , \psi'] \in \mathcal{H}om(C, C[-1]^{\otimes (n+m-1)})$ sending $c \in C$ to
\begin{equation}
\label{eq:gercobra}
\begin{split}
    &\sum_{i=0}^{m-1} (-1)^{(\deg \psi -1) \epsilon'_{i+1}} \cl{c_{(1)}^{\psi'} | \dots | c_{(i)}^{\psi'}} \psi(c_{(i+1)}^{\psi'}) \cl{c_{(i+2)}^{\psi'} | \dots | c_{(m)}^{\psi'}}
   \\
    &- \sum_{i=0}^{n-1} (-1)^{(\deg \psi' -1) (\epsilon_{i+1} + \deg \psi -1)} \cl{c_{(1)}^{\psi} | \dots | c_{(i)}^{\psi}} \psi'(c_{(i+1)}^{\psi}) \cl{c_{(i+2)}^{\psi} | \dots | c_{(n)}^{\psi}},     
\end{split}
\end{equation}
where $\epsilon_{i+1} = (\sum_{j=1}^{i} \deg c_{(j)}^{\psi}) - i$ and $\epsilon'_{i+1} = (\sum_{j=1}^{i} \deg c_{(j)}^{\psi'}) - i$.  
It can be proved along the same lines as for the case of algebras that this gives a dg Lie algebra structure on $\mathcal{H}om(C,\tilde{\Omega}^{+}(C))[1]$. 
Moreover, the expression for the Gerstenhaber bracket \eqref{eq:gercobra} may be also applied to elements of $\mathcal{H}om(C,\Omega^{+}(C))[1]$, which also becomes a dg Lie algebra such 
that the quasi-isomorphisms $\mathcal{H}om^{\tilde{\tau}^{A}}(C,\tilde{\Omega}^{+}(C)) \rightarrow \mathcal{H}om^{\tau^{C}}(C,\Omega^{+}(C))$  and 
$\mathcal{H}om^{\tau^{C}}(C,\Omega^{+}(C)) \rightarrow \mathcal{H}om^{\tilde{\tau}^{A}}(C,\tilde{\Omega}^{+}(C))$ induced by those recalled in the second paragraph of Subsubsection \ref{subsubsec:cobar} 
are in fact morphisms of dg Lie algebras.

Let us assume that $k$ is a field and 
$C$ is a locally finite dimensional coaugmented dg coalgebra. 
By the comments of the previous subsubsection we know that $C^{\#}$ is a locally finite dimensional augmented dg algebra. 
Moreover, the map $\operatorname{Coder}(C) \rightarrow \operatorname{Der}(C^{\#})^{\mathrm{op}}$ given by $\phi \mapsto (-1)^{\deg \phi} \phi^{\#}$ is an isomorphism of graded Lie algebras, 
where we recall that the \emph{opposite} graded Lie algebra $\mathfrak{g}^{\mathrm{op}}$ of a graded Lie algebra $\mathfrak{g}$ with bracket $[ \hskip 0.6mm , ]$ 
has the same underlying graded $k$-module structure and \emph{opposite bracket} $[ \hskip 0.6mm , ]_{\mathrm{op}}$ given by $[x,y]_{\mathrm{op}} = (-1)^{\deg x \deg y} [y, x]$($= - [x,y]$), 
for $x, y \in \mathfrak{g}$ homogeneous. 
In such situations we may usually say that the previous map is an \emph{anti-isomorphism of dg Lie algebras}. 
Analogously, if $A$ is a locally finite dimensional augmented dg algebra, then $A^{\#}$ is a locally finite dimensional coaugmented dg coalgebra, and 
$\operatorname{Der}(A) \rightarrow \operatorname{Coder}(A^{\#})^{\mathrm{op}}$ given by $\phi \mapsto \phi^{\#}$ is an isomorphism of graded Lie algebras. 
Since the map $\mathfrak{g} \rightarrow \mathfrak{g}^{\mathrm{op}}$ given by $x \mapsto -x$ is an isomorphism of graded Lie algebras, we get in fact  isomorphisms of graded Lie algebras $\operatorname{Coder}(C) \rightarrow \operatorname{Der}(C^{\#})$ and $\operatorname{Der}(A) \rightarrow \operatorname{Coder}(A^{\#})$ given by $\phi \mapsto - \phi^{\#}$, 
for $\phi \in \operatorname{Coder}(C)$ or $\phi \in \operatorname{Der}(A)$, respectively. 

\section{Koszul duality for Hochschild (co)homology} 
\label{sec:kosdual}

We will assume for the rest of the article that $k$ is a field. 
As we have seen in the previous subsection, the Hochschild cohomology of an augmented dg algebra $A$ can be realized as a twist of the augmented dg algebra $\mathcal{H}om(C,A)$, for a certain coaugmented dg coalgebra $C$, which is quasi-isomorphic to the bar construction $B^{+}(A)$ of $A$. 
If $f' : C \rightarrow B^{+}(A)$ is the quasi-isomorphism, the twist is given by a twisting cochain $\tau = \tau_{A} \circ f'$. 
Moreover the Hochschild homology of $A$ can be regarded in a similar fashion. 

Before stating the following result we recall that given an augmented dg algebra $\Lambda$ and a dg bimodule $M$ over $\Lambda$, the graded dual $M^{\#}$ is also a dg bimodule over $\Lambda$ provided with the action 
\[     (\lambda \cdot f \cdot \lambda') (m) = (-1)^{\deg \lambda (\deg f + \deg \lambda' + \deg m)} f (\lambda' m \lambda),     \]
where $m \in M$, $f \in M^{\#}$ and $\lambda, \lambda' \in \Lambda$ are homogeneous. 

The following result is straightforward but somehow tedious. 
\begin{proposition}
\label{prop:preprehoch}
Given a locally finite dimensional coaugmented dg coalgebra $C$ and a locally finite dimensional augmented dg algebra $A$, there is an isomorphism of augmented dg algebras 
\[     \mathcal{H}om (C,A) \rightarrow \mathcal{H}om (A^{\#},C^{\#})     \]
given by taking dual $\phi \mapsto \phi^{\#}$, the inverse being defined as $\psi \mapsto \iota_{A}^{-1} \circ \psi^{\#} \circ \iota_{C}$. 
Via this isomorphism, the dg $\mathcal{H}om (A^{\#},C^{\#})$-bimodule $C^{\#} \otimes A^{\#}$ becomes a dg $\mathcal{H}om (C,A)$-bimodule. 
Moreover, the canonical nondegenerate pairing 
\[     \beta : (C^{\#} \otimes A^{\#}) \otimes (A \otimes C) \rightarrow k     \]
given by $(g \otimes f) \otimes (a \otimes c) \rightarrow g(c) f(a)$, for $a \in A$, $c \in C$, $g \in C^{\#}$ and $f \in A^{\#}$ homogeneous, 
is a morphism of dg vector spaces which is also $\mathcal{H}om (C,A)^{e}$-balanced, \textit{i.e.} 
\[     \beta((g \otimes f).(\phi \otimes \psi) \otimes (a \otimes c)) = \beta((g \otimes f) \otimes (\phi \otimes \psi).(a \otimes c)),     \]
where $\phi \otimes \psi \in \mathcal{H}om (C,A)^{e}$, and we are using the obvious equivalence between dg bimodules and either left or right dg modules over the enveloping algebra. 
This further implies that there is an isomorphism of  dg $\mathcal{H}om (C,A)$-bimodules between $C^{\#} \otimes A^{\#}$ and $(A \otimes C)^{\#}$, 
given explicitly by $(g \otimes f) \mapsto ((a \otimes c)\mapsto f(a) g(c))$.  
\end{proposition}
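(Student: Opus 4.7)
The plan is to verify each assertion essentially by unravelling the definitions, where the only genuine difficulty lies in keeping the Koszul signs under control. I would begin by establishing that $\phi \mapsto \phi^{\#}$ is a well-defined isomorphism of the underlying dg $k$-modules. The local finite-dimensionality of $C$ and $A$ ensures that $\iota_{C}$ and $\iota_{A}$ are isomorphisms of dg $k$-modules (by the first paragraph of Subsubsection \ref{subsubsec:barcobar}), which gives the prescribed inverse $\psi \mapsto \iota_{A}^{-1} \circ \psi^{\#} \circ \iota_{C}$. Compatibility with the differentials follows from $d_{M^{\#}} = -d_{M}^{\#}$, together with the sign that appears in $d_{\mathcal{H}om(-,-)}$; here one checks that the signs introduced by dualizing two maps and by the Koszul rule exactly cancel.

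Next I would check that the map is a morphism of augmented graded algebras. Since the product on $\mathcal{H}om(C,A)$ is $\mu_{A} \circ (\phi \otimes \psi) \circ \Delta_{C}$ and the products on $A^{\#}$ and $C^{\#}$ are obtained by dualizing $\mu_{A}$ and $\Delta_{C}$ through the maps $\iota_{A,A}$ and $\iota_{C,C}$, the identity $(\phi * \psi)^{\#} = \phi^{\#} * \psi^{\#}$ reduces to the compatibility $(f \otimes g)^{\#} \circ \iota_{M,N} = \iota_{N^{\#},M^{\#}} \circ (g^{\#} \otimes f^{\#})$ (up to the appropriate Koszul sign) together with functoriality of $(-)^{\#}$ under composition. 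The unit $\eta_{A} \circ \epsilon_{C}$ and the augmentation $\phi \mapsto \epsilon_{A}(\phi(1_{C}))$ are transported to the corresponding unit and augmentation on $\mathcal{H}om(A^{\#},C^{\#})$ by direct inspection, using that $\epsilon_{A^{\#}}(f) = f(1_{A})$ and the dual statement for $C^{\#}$.

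For the bimodule claims, I would use the structure of dg $\mathcal{H}om(C,A)$-bimodule on $C^{\#} \otimes A^{\#}$ transported from the $\mathcal{H}om(A^{\#},C^{\#})$-bimodule $C^{\#} \otimes A^{\#}$ described in Subsubsection \ref{subsubsec:convo} (applied to the coaugmented dg coalgebra $A^{\#}$, the augmented dg algebra $C^{\#}$, and the dg $C^{\#}$-bimodule $C^{\#}$ itself via the obvious outer structure). Then the balancedness of $\beta$ reduces to a direct computation: expanding both sides using Sweedler notation for $\Delta_{C}$ and matching the signs that arise from evaluating $f \in A^{\#}$ on a product $\phi(c_{(1)}) a \psi(c_{(3)})$ against the corresponding iterated coproduct $c_{(1)} \otimes c_{(2)} \otimes c_{(3)}$. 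Here the two sign contributions $\deg \psi \deg c$ and $\deg c_{(3)}(\deg m + \deg c_{(1)} + \deg c_{(2)} + \deg \psi)$ appearing in the formula of Subsubsection \ref{subsubsec:convo} reappear after dualization with the opposite sign, yielding the desired equality.

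Finally, to produce the isomorphism $C^{\#} \otimes A^{\#} \simeq (A \otimes C)^{\#}$, I would invoke nondegeneracy of $\beta$ (which is automatic from local finite-dimensionality of both factors of $A \otimes C$) to conclude that the adjoint of $\beta$ is an isomorphism of graded $k$-modules, then verify that this adjoint coincides with the prescribed formula $(g \otimes f) \mapsto ((a \otimes c) \mapsto f(a)g(c))$ and that balancedness of $\beta$ translates into $\mathcal{H}om(C,A)$-bilinearity of this adjoint. The step I expect to absorb most of the energy is the sign-bookkeeping in the bimodule action: the interplay between the three Sweedler components of $\Delta_{C}^{(3)}$ and the duality pairing has to be handled carefully, and this is where a naive computation most easily goes astray.
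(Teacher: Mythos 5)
Your proposal is correct and coincides with what the paper intends: the paper gives no written proof at all, dismissing the proposition as ``straightforward but somehow tedious,'' and your outline is precisely the direct unravelling of the definitions (duality isomorphism via $\iota_{A}$, $\iota_{C}$, compatibility of convolution with $\iota_{M,N}$, transport of the bimodule structure from Subsubsection \ref{subsubsec:convo}, and the sign bookkeeping for balancedness) that this remark presupposes. The only quibble is the indexing in your stated compatibility $(f\otimes g)^{\#}\circ\iota_{M,N}=\iota_{N^{\#},M^{\#}}\circ(g^{\#}\otimes f^{\#})$, which should read $(f\otimes g)^{\#}\circ\iota_{M',N'}=\iota_{M,N}\circ(f^{\#}\otimes g^{\#})$ without the flip — harmless here, since the convolution structure on $\mathcal{H}om(A^{\#},C^{\#})$ already dualizes $\mu_{A}$ to the coproduct and $\Delta_{C}$ to the product, so no order reversal occurs and $\phi\mapsto\phi^{\#}$ is a genuine (not anti-) morphism, as you conclude.
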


We may also twist the previous result by a twisting cochain $\tau \in \mathcal{H}om (C,A)$. 
This implies that $\tau^{\#}$ is a twisting cochain of $\mathcal{H}om (A^{\#},C^{\#})$, and moreover the dual map still provides an isomorphism of augmented dg algebras 
\[     \mathcal{H}om^{\tau} (C,A) \rightarrow \mathcal{H}om^{\tau^{\#}} (A^{\#},C^{\#}).     \]
Moreover, the canonical pairing $\beta$ considered before can be regarded as pairing on the underlying graded vector spaces 
\[     \beta : (C^{\#} \otimes_{\tau^{\#}} A^{\#}) \otimes (A \otimes_{\tau} C) \rightarrow k,     \]
which is still $\mathcal{H}om (C,A)^{e}$-balanced, for the graded bimodule structures have not changed. 
Moreover, it is easily verified that it commutes with the new differentials, taking into account the previous isomorphism of augmented dg algebras and for the differential twists are given in terms of the bimodule structure. 
Hence $\beta$ is in fact a morphism of dg bimodules, which in turn implies that the previous isomorphism of dg $\mathcal{H}om (C,A)$-bimodules between $C^{\#} \otimes A^{\#}$ and $(A \otimes C)^{\#}$ induces an isomorphism of dg $\mathcal{H}om^{\tau} (C,A)$-bimodules between $C^{\#} \otimes_{\tau^{\#}} A^{\#}$ and $(A \otimes_{\tau} C)^{\#}$. 

We collect the previous remark in the following statement. 
\begin{proposition}
\label{prop:prehoch}
Given a locally finite dimensional coaugmented dg coalgebra $C$, a locally finite dimensional augmented dg algebra $A$ and a twisting cochain $\tau \in \mathcal{H}om (C,A)$, 
then $\tau^{\#}$ is also a twisting cochain and there is an isomorphism of augmented dg algebras 
\[     \mathcal{H}om^{\tau} (C,A) \rightarrow \mathcal{H}om^{\tau^{\#}} (A^{\#},C^{\#})     \]
given by taking dual $\phi \mapsto \phi^{\#}$, with inverse defined as $\psi \mapsto \iota_{A}^{-1} \circ \psi^{\#} \circ \iota_{C}$.  
Via this last isomorphism, the dg $\mathcal{H}om^{\tau^{\#}} (A^{\#},C^{\#})$-bimodule $C^{\#} \otimes_{\tau^{\#}} A^{\#}$ becomes a dg $\mathcal{H}om^{\tau} (C,A)$-bimodule. 
Moreover, the canonical nondegenerate pairing 
\[     \beta : (C^{\#} \otimes_{\tau^{\#}} A^{\#}) \otimes (A \otimes_{\tau} C) \rightarrow k     \]
given by $(g \otimes f) \otimes (a \otimes c) \rightarrow g(c) f(a)$, for $a \in A$, $c \in C$, $g \in C^{\#}$ and $f \in A^{\#}$ homogeneous, 
is $\mathcal{H}om^{\tau} (C,A)^{e}$-balanced, \textit{i.e.} 
\[     \beta((g \otimes f).(\phi \otimes \psi) \otimes (a \otimes c)) = \beta((g \otimes f) \otimes (\phi \otimes \psi).(a \otimes c)),     \]
where $\phi \otimes \psi \in \mathcal{H}om^{\tau} (C,A)^{e}$, and we are using the obvious equivalence between dg bimodules and either left or right dg modules over the enveloping algebra.  
This implies that there is an isomorphism of  dg $\mathcal{H}om^{\tau} (C,A)$-bimodules between $C^{\#} \otimes_{\tau^{\#}} A^{\#}$ and $(A \otimes_{\tau} C)^{\#}$, 
given explicitly by $(g \otimes f) \mapsto ((a \otimes c)\mapsto f(a) g(c))$.  
\end{proposition}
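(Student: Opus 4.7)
The plan is to reduce everything to Proposition \ref{prop:preprehoch} by exploiting the observation that a twist of an augmented dg algebra (or of a dg bimodule over one) modifies only the differential, leaving the underlying graded structure (product, unit, augmentation, bimodule action) intact. Let $\Phi : \mathcal{H}om(C,A) \to \mathcal{H}om(A^{\#}, C^{\#})$, $\phi \mapsto \phi^{\#}$, denote the isomorphism of augmented dg algebras supplied by Proposition \ref{prop:preprehoch}. First I would verify that $\tau^{\#} = \Phi(\tau)$ is a twisting cochain: since $\Phi$ is a morphism of dg algebras it transports the Maurer--Cartan equation, and the two vanishing conditions translate directly. Indeed $\tau \circ \eta_{C} = 0$ yields $\epsilon_{C^{\#}} \circ \tau^{\#} = 0$ once one unwinds that $\epsilon_{C^{\#}}(\omega) = \omega(\eta_{C}(1_{k}))$, and $\epsilon_{A} \circ \tau = 0$ yields $\tau^{\#} \circ \eta_{A^{\#}} = 0$ since $\eta_{A^{\#}}(1_{k}) = \epsilon_{A}$ and hence $\tau^{\#}(\epsilon_{A}) = \epsilon_{A} \circ \tau$.

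Next, since twisting modifies the differential of an augmented dg algebra only by the inner derivation $\mathrm{ad}(\tau)$ (resp.\ $\mathrm{ad}(\tau^{\#})$) and $\Phi$ is a morphism of graded algebras, the identity $\Phi \circ \mathrm{ad}(\tau) = \mathrm{ad}(\tau^{\#}) \circ \Phi$ is automatic. Consequently $\Phi$ intertwines the twisted differentials and promotes to the asserted isomorphism $\mathcal{H}om^{\tau}(C,A) \to \mathcal{H}om^{\tau^{\#}}(A^{\#}, C^{\#})$ of augmented dg algebras.

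For the pairing statement I would observe that $C^{\#} \otimes_{\tau^{\#}} A^{\#}$ and $C^{\#} \otimes A^{\#}$ share the same underlying graded module, and similarly $A \otimes_{\tau} C$ and $A \otimes C$; the twist of each bimodule differential is defined purely in terms of the bimodule action by $\tau$ (resp.\ $\tau^{\#}$). By Proposition \ref{prop:preprehoch}, the pairing $\beta$ is already nondegenerate and $\mathcal{H}om(C,A)^{e}$-balanced at the graded level, and these graded properties are unaffected by the twist; in particular the balanced statement with respect to $\mathcal{H}om^{\tau}(C,A)^{e}$ follows for free from its untwisted counterpart. To check that $\beta$ commutes with the twisted differentials, one subtracts the untwisted chain-map assertion of Proposition \ref{prop:preprehoch} and is left with the task of showing that the two new summands produced by the twists cancel under $\beta$; this cancellation is a direct instance of the $\mathcal{H}om(C,A)^{e}$-balanced property applied to the element corresponding to $\tau$.

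Finally, the isomorphism $C^{\#} \otimes_{\tau^{\#}} A^{\#} \to (A \otimes_{\tau} C)^{\#}$ is simply the adjoint of $\beta$, so under local finite-dimensionality its bijectivity is formal from nondegeneracy of $\beta$, and its compatibility with the dg $\mathcal{H}om^{\tau}(C,A)$-bimodule structures is a direct consequence of the balanced property together with the chain-map property just established. The main obstacle is the sign bookkeeping needed to verify that the two twists of the differentials cancel under $\beta$; however, once the untwisted version is in hand this reduces to a single application of the balanced property, which is why the author qualifies the proof as ``straightforward but somehow tedious''.
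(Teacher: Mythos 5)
Your proposal follows essentially the same route as the paper: the author likewise obtains Proposition \ref{prop:prehoch} by twisting Proposition \ref{prop:preprehoch}, noting that $\tau^{\#}$ is a twisting cochain because the dual map is a morphism of augmented dg algebras, that the graded-level assertions (product, unit, bimodule actions, balancedness) are untouched by the twist, and that compatibility with the twisted differentials reduces to the balanced property since the differential twists are expressed through the bimodule structure. The argument is correct and matches the paper's.
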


We will apply the previous result to obtain the isomorphism statements between Hochschild homology and cohomology groups of Koszul dual pairs. 
Indeed, following B. Keller, the \emph{Koszul dual} of an augmented dg algebra $A$ is defined as the augmented dg algebra $B^{+}(A)^{\#}$ (see \cite{Ke}, Section 10.2), 
and shall be denoted by $E(A)$.  

We are interested in the case that augmented dg algebras are \emph{Adams connected}, in the sense introduced in \cite{LPWZ08}, Def. 2.1, \textit{i.e.} an augmented dg algebra 
where the grading group $G$ is $\ZZ \times \ZZ$ such that its augmentation kernel $I_{A}$ satisfies that $\oplus_{n \in \ZZ} I_{A}^{(n,m)}$ is finite dimensional, for all $m \in G' = \ZZ$, and 
either $I_{A}^{(n,m)}$ vanishes for all $n \in \ZZ$ and all $m \geq 0$, or $I_{A}^{(n,m)}$ vanishes for all $n \in \ZZ$ and all $m \leq 0$.
By the previously cited article, an Adams connected augmented dg algebra is also locally finite dimensional, but moreover, its Koszul dual $E(A)$ is also locally finite dimensional and 
also Adams connected (see \cite{LPWZ08}, Lemma 2.2). 
If we apply this to the case $\Lambda = E(A)$, for $A$ an Adams connected augmented dg algebra, we get a quasi-isomorphism $\beta_{A}' : E(E(A)) \rightarrow A$ 
of augmented dg algebras given by $\beta_{A} \circ \Omega^{+}(\iota_{B^{+}(A)})^{-1} \circ j_{E(A)}^{-1}$, or else $\beta_{A} \circ \iota_{\Omega^{+}(B^{+}(A))}^{-1} \circ ((j^{B^{+}(A)})^{-1})^{\#}$, 
which follows by the first identity \eqref{eq:impdual2} with $D = B^{+}(A)$. 
Thus, we have a quasi-isomorphism $(\beta_{A}')^{\#} : A^{\#} \rightarrow E(E(A))^{\#}$ of coaugmented dg coalgebras. 
Moreover, it can be easily checked that $\tau_{E(A)} \circ \iota_{B^{+}(E(A))}^{-1} \circ (\beta_{A}')^{\#} = \tau_{A}^{\#}$. 
Indeed, by the first identity of \eqref{eq:impdual1} with $D = B^{+}(A)$, the previous expression reduces to prove $\tau_{E(A)} \circ (j^{B^{+}(A)})^{-1} \circ \beta_{A}^{\#} = \tau_{A}^{\#}$, 
which follows from a direct but long computation. 
This implies that we have a quasi-isomorphism of augmented dg algebras 
\begin{equation}
\label{eq:quasi-isohoch1}
     \mathcal{H}om(\iota_{B^{+}(E(A))}^{-1} \circ (\beta_{A}')^{\#},E(A))\hskip -0.6mm:\hskip -0.6mm\mathcal{H}om^{\tau_{E(A)}} (B^{+}(E(A)),E(A)) \rightarrow \mathcal{H}om^{\tau_{A}^{\#}} (A^{\#},E(A)).     
\end{equation}
By the proposition applied to $C = B^{+}(A)$ the latter augmented dg algebra is isomorphic to $\mathcal{H}om^{\tau_{A}} (B^{+}(A),A)$, which implies thus 
we have a quasi-isomorphism of augmented dg algebras of the form  
\begin{equation}
\label{eq:quasi-isohoch2}
     \mathcal{H}om^{\tau_{E(A)}} (B^{+}(E(A)),E(A)) \rightarrow \mathcal{H}om^{\tau_{A}} (B^{+}(A),A).     
\end{equation}
This implies in particular that there is an isomorphism of augmented graded algebras between the Hochschild cohomology rings $HH^{\bullet}(E(A)) \rightarrow HH^{\bullet}(A)$. 

We would like to remark that an explicit quasi-inverse for \eqref{eq:quasi-isohoch1} may be constructed. 
Indeed, the morphism 
\begin{equation}
\label{eq:quasi-isohoch1inv}
     \mathcal{H}om(B^{+}(j^{A^{\#}}),j^{A^{\#}}) \circ \mathcal{H}om_{\Omega^{+}(A^{\#})^{e}}(\gamma^{A^{\#}} , \Omega^{+}(A^{\#})) \circ \mathcal{H}om(A^{\#},(j^{A^{\#}})^{-1})    
\end{equation}
is such a quasi-inverse. 
This can be proved as follows. 
We consider the composition of $\mathcal{H}om(A^{\#},(j^{A^{\#}}))$, \eqref{eq:quasi-isohoch1} and $\mathcal{H}om(B^{+}(E(A)),(j^{A^{\#}})^{-1})$, which is just 
$\mathcal{H}om((j^{B^{+}(A)})^{-1} \circ \beta_{A}^{\#},\Omega^{+}(A^{\#}))$. 
Our claim is tantamount to proving that the latter is a quasi-inverse of $\mathcal{H}om(B^{+}(j^{A^{\#}}),\Omega^{+}(A^{\#})) \circ \mathcal{H}om_{\Omega^{+}(A^{\#})^{e}}(\gamma^{A^{\#}} , \Omega^{+}(A^{\#}))$, which is in turn equivalent to the fact that $\mathcal{H}om(B^{+}((j^{A^{\#}})^{-1}) \circ (j^{B^{+}(A)})^{-1} \circ \beta_{A}^{\#},\Omega^{+}(A^{\#}))$ 
is a quasi-inverse of $\mathcal{H}om_{\Omega^{+}(A^{\#})^{e}}(\gamma^{A^{\#}} , \Omega^{+}(A^{\#}))$. 
By second identity of \eqref{eq:candual2}, the former of these two morphism coincides with $\mathcal{H}om(\beta^{A^{\#}},\Omega^{+}(A^{\#}))$, which is clearly a quasi-inverse 
of $\mathcal{H}om_{\Omega^{+}(A^{\#})^{e}}(\gamma^{A^{\#}} , \Omega^{+}(A^{\#}))$, by the comments of the last paragraph of Subsubsection \ref{subsubsec:cobar}. 

We may apply similar arguments to the tensor products. 
Indeed, by taking $C = B^{+}(A)$ in the previous proposition, we see that there is an isomorphism of dg bimodules over $\mathcal{H}om^{\tau} (B^{+}(A),A)$
of the form $E(A) \otimes_{\tau_{A}^{\#}} A^{\#} \rightarrow (A \otimes_{\tau_{A}} B^{+}(A))^{\#}$. 
On the other hand, 
the morphism of dg vector spaces $\mathrm{id}_{E(A)} \otimes ((j^{B^{+}(A)})^{-1} \circ \beta_{A}^{\#}) : E(A) \otimes_{\tau_{A}^{\#}} A^{\#} \rightarrow E(A) \otimes_{\tau_{E(A)}} B^{+}(E(A))$ is a quasi-isomorphism. 
A quasi-inverse is given by the $\mathrm{id}_{E(A)} \otimes (B^{+}(j_{A}) \circ \gamma_{A^{\#}})$, as it directly follows from the second identity of \eqref{eq:candual2} 
and the comments of the last paragraph of Subsubsection \ref{subsubsec:cobar}. 

The composition of the inverse of the isomorphism $E(A) \otimes_{\tau_{A}^{\#}} A^{\#} \rightarrow (A \otimes_{\tau_{A}} B^{+}(A))^{\#}$ of the proposition 
together with the previous map $\mathrm{id}_{E(A)} \otimes ((j^{B^{+}(A)})^{-1} \circ \beta_{A}^{\#})$ thus gives a quasi-isomorphism 
\begin{equation}
\label{eq:quasi-isoch2}
      (A \otimes_{\tau_{A}} B^{+}(A))^{\#} \rightarrow E(A) \otimes_{\tau_{E(A)}} B^{+}(E(A)).     
\end{equation}
It is clear by the previous proposition and the comments in the penultimate paragraph of the Subsubsection \ref{subsubsec:barcobar} 
that it is also a morphism of dg bimodules 
over $\mathcal{H}om^{\tau_{E(A)}} (B^{+}(E(A)),E(A))$, where the domain of the map has a bimodule structure given by the morphism of augmented dg algebras 
\eqref{eq:quasi-isohoch2}. 
Since taking (co)ho\-mology commutes with taking duals we obtain an isomorphism $HH_{\bullet}(A)^{\#} \rightarrow HH_{\bullet}(E(A))$ 
of graded bimodules over $HH^{\bullet}(E(A))$, where the Hochschild homology $HH_{\bullet}(A)$ has the structure of bimodule over the Hochschild cohomology of $E(A)$ 
via the previous isomorphism $HH^{\bullet}(E(A)) \rightarrow HH^{\bullet}(A)$. 

We may summarize the previous comments in the following result, which may be regarded as the Koszul duality phenomenon for Hochschild (co)homology. 
\begin{theorem}
\label{theorem:koszuldual}
Let $A$ be an augmented dg algebra which is assumed to be Adams connected. 
We have a quasi-isomorphism of augmented dg algebras 
\[     \mathcal{H}om^{\tau_{E(A)}} (B^{+}(E(A)),E(A)) \rightarrow \mathcal{H}om^{\tau_{A}} (B^{+}(A),A),     \]
which yields an isomorphism of augmented graded algebras $HH^{\bullet}(E(A)) \rightarrow HH^{\bullet}(A)$. 
We also have a quasi-isomorphism of dg $\mathcal{H}om^{\tau_{E(A)}} (B^{+}(E(A)),E(A))$-bimodules 
\[     (A \otimes_{\tau_{A}} B^{+}(A))^{\#} \rightarrow E(A) \otimes_{\tau_{E(A)}} B^{+}(E(A)),        \]
where the domain has structure of bimodule over $\mathcal{H}om^{\tau_{E(A)}} (B^{+}(E(A)),E(A))$ via the first morphism of this proposition. 
We obtain an isomorphism $HH_{\bullet}(A)^{\#} \rightarrow HH_{\bullet}(E(A))$ of graded bimodules over $HH^{\bullet}(E(A))$, where the domain has structure of bimodule 
given by the isomorphism $HH^{\bullet}(E(A)) \rightarrow HH^{\bullet}(A)$. 
\end{theorem}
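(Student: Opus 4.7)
The plan is to combine the duality between bar and cobar constructions established in Subsubsection \ref{subsubsec:barcobar} with Proposition \ref{prop:prehoch}, following essentially the path already sketched in the paragraphs preceding the theorem. The Adams connectedness hypothesis is used to guarantee that both $A$ and $E(A) = B^{+}(A)^{\#}$ are locally finite dimensional, so that all dualizations behave well and the iterated Koszul dual $E(E(A))$ is canonically identified with $\Omega^{+}(B^{+}(A))^{\#\#}$ via $j_{E(A)}$ and $\iota_{\Omega^{+}(B^{+}(A))}$.

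First I would produce the quasi-isomorphism of augmented dg algebras $\beta_{A}' : E(E(A)) \to A$ by composing $\beta_{A}$ with the canonical identifications given by $j$ and $\iota$. Taking graded duals yields a quasi-isomorphism of coaugmented dg coalgebras $(\beta_{A}')^{\#} : A^{\#} \to E(E(A))^{\#}$. The main obstacle is verifying the twisting-cochain compatibility
\[
     \tau_{E(A)} \circ \iota_{B^{+}(E(A))}^{-1} \circ (\beta_{A}')^{\#} = \tau_{A}^{\#},
\]
which, after using the first identity of \eqref{eq:impdual1} with $D = B^{+}(A)$, reduces to $\tau_{E(A)} \circ (j^{B^{+}(A)})^{-1} \circ \beta_{A}^{\#} = \tau_{A}^{\#}$; this is a direct computation combining \eqref{eq:candual1} (dualized) with the universal property characterizing $\beta_{A}$.

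Once this compatibility is in place, applying the contravariant functor $\mathcal{H}om(\place,E(A))$ together with naturality of the twisting construction from Subsubsection \ref{subsubsec:twistedconvo} gives the quasi-isomorphism \eqref{eq:quasi-isohoch1} of augmented dg algebras $\mathcal{H}om^{\tau_{E(A)}}(B^{+}(E(A)),E(A)) \to \mathcal{H}om^{\tau_{A}^{\#}}(A^{\#},E(A))$. Composing with the isomorphism of augmented dg algebras provided by Proposition \ref{prop:prehoch} applied to $C = B^{+}(A)$ yields the desired quasi-isomorphism \eqref{eq:quasi-isohoch2}, and taking cohomology produces the isomorphism $HH^{\bullet}(E(A)) \to HH^{\bullet}(A)$ of augmented graded algebras.

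For the bimodule statement I would run the parallel argument on tensor products. Proposition \ref{prop:prehoch} with $C = B^{+}(A)$ already furnishes a canonical isomorphism of dg $\mathcal{H}om^{\tau_{A}}(B^{+}(A),A)$-bimodules $E(A) \otimes_{\tau_{A}^{\#}} A^{\#} \cong (A \otimes_{\tau_{A}} B^{+}(A))^{\#}$, while the twisting compatibility established above implies that $\mathrm{id}_{E(A)} \otimes (\iota_{B^{+}(E(A))}^{-1} \circ (\beta_{A}')^{\#})$ is a quasi-isomorphism $E(A) \otimes_{\tau_{A}^{\#}} A^{\#} \to E(A) \otimes_{\tau_{E(A)}} B^{+}(E(A))$, with an explicit quasi-inverse $\mathrm{id}_{E(A)} \otimes (B^{+}(j_{A}) \circ \gamma^{A^{\#}})$ by means of the second identity of \eqref{eq:candual2} and the comments of the last paragraph of Subsubsection \ref{subsubsec:cobar}. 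Composing the inverse of the first map with the second yields \eqref{eq:quasi-isoch2}; the fact that it intertwines the bimodule actions over $\mathcal{H}om^{\tau_{E(A)}}(B^{+}(E(A)),E(A))$ follows from the balance property of the pairing $\beta$ in Proposition \ref{prop:prehoch} together with the quasi-isomorphism of the first part. Taking cohomology, and using that duals commute with taking cohomology for locally finite dimensional dg modules over a field, delivers the isomorphism of graded bimodules $HH_{\bullet}(A)^{\#} \to HH_{\bullet}(E(A))$.
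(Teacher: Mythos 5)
Your proposal is correct and follows essentially the same route as the paper: you construct $\beta_{A}'$ from $\beta_{A}$ via the identifications $j$ and $\iota$, reduce the key compatibility to $\tau_{E(A)} \circ (j^{B^{+}(A)})^{-1} \circ \beta_{A}^{\#} = \tau_{A}^{\#}$ using \eqref{eq:impdual1}, and then combine $\mathcal{H}om(\place,E(A))$ with Proposition \ref{prop:prehoch} for $C = B^{+}(A)$, running the dual argument with $\gamma^{A^{\#}}$ and \eqref{eq:candual2} for the bimodule half exactly as in the text.
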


The isomorphism between the Hochschild homology groups has not received much attention, though in case $A$ is a (quadratic) Koszul algebra a linear isomorphism 
(\textit{i.e.} as graded modules over $k$) between $HH_{\bullet}(E(A))$ and $HH_{\bullet}(A)^{\#}$ can be obtained from (or following the lines of) the isomorphism between 
the corresponding cyclic homology groups given in \cite{FT}, Thm. 2.4.1. 
At that article, the authors suppose $k$ has characteristic zero, but this assumption is not necessary if dealing only with Hochschild homology groups (see \cite{Lo} for a more detailed analysis on 
the corresponding gradings). 
An isomorphism of graded algebras between $HH^{\bullet}(E(A))$ and $HH^{\bullet}(A)$ in case $A$ is a (quadratic) Koszul algebra was already announced by R.-O. Buchweitz in 
the Conference on Representation Theory, Canberra, in July 2003. 
Furthermore, the previous morphism \eqref{eq:quasi-isohoch2} for Hochschild cohomology has already appeared: 
though we have scarcely encountered it through the literature, we believe it should be well-known to the experts. 
By using the previous corresponding identifications, it is easy to see that it is essentially the same as the one appearing in \cite{FMT}, Prop. 5.1, 
though some caution should be taken into account. 
Their construction is somehow more delicate (and they use \emph{a priori} different versions of bar and cobar constructions) 
and their assumptions on the grading are in principle weaker, for they have just a cohomological grading. 
Their quasi-isomorphism result is though slightly different, since they need to assume that the coalgebra $C$ (whose dual would be our algebra $A$) is not only connected but also 
\emph{simply connected}, \textit{i.e.} the coaugmentation cokernel of $C$ lies in positive (homological) degrees greater than or equal to $2$, which is a very harsh assumption for us.  
They need it in order to ascertain that their map $\Theta$ (the analogous of our morphism $j_{C^{\#}}$) given before Prop. 5.1 of that article is an isomorphism.       
In any case, under our Adams connected assumption, the morphism they obtain is essentially given as follows. 
For a coaugmented dg coalgebra $C$ one may consider the morphism of dg modules over $k$ of the form  
\begin{equation}
\label{eq:morfmt} 
    \mathcal{H}om^{\tau_{\Omega^{+}(C)}} (B^{+}(\Omega^{+}(C)),\Omega^{+}(C)) \rightarrow \mathcal{H}om^{\tau_{C^{\#}}} (B^{+}(C^{\#}),C^{\#}),     
\end{equation}
which coincides (up to quasi-isomorphism) with the one given in \eqref{eq:quasi-isohoch2} for $A = C^{\#}$ 
using the identification of $\Omega^{+}(C)$ with $E(A)$ given by $(j^{C})^{\#} \circ \iota_{\Omega^{+}(C)}$. 
We want to remark that at least in this situation the previous proposition (or its variants) are essentially a direct consequence of the constructions involved: 
the only computation which may be considered as proper to this situation is $\tau_{E(A)} \circ (j^{B^{+}(A)})^{-1} \circ \beta_{A}^{\#} = \tau_{A}^{\#}$ (which is straightforward) 
that allows us to apply the general machinery described in the previous subsection. 
Indeed, the previous proposition is just the direct consequence of such a general machinery and the also completely general result given by Proposition \ref{prop:prehoch}.  

On the other hand, we want to remark that the authors in \cite{FMT}, Prop. 5.3, have further proved that the morphism \eqref{eq:morfmt} induces in fact an isomorphism of graded Lie algebras 
for the Gerstenhaber bracket on the Hochschild cohomology rings, implying that it thus induces a morphism of Gerstenhaber algebras. 
In fact a (slightly different but essentially equivalent) proof can be summarized as follows. 
As explained before, the morphism of dg modules \eqref{eq:quasi-isohoch1} can be considered as a composition of $\mathcal{H}om(\beta_{A}^{\#} \circ (j^{E(A)})^{-1},E(A))$ 
together with the inverse of the morphism $\mathcal{H}om^{\tau_{A}}(B^{+}(A),A) \rightarrow \mathcal{H}om^{\tau_{A}}(A^{\#},E(A))$ given by $\phi \mapsto \phi^{\#}$. 
Note the commutativity of the diagram 
\[
\xymatrix 
{
\mathcal{H}om^{\tau_{A}}(B^{+}(A),A)[1]
\ar[d]^{(\place)^{\#}}
\ar[r]^-{\sim}
&
\operatorname{Coder}(\tilde{B}^{+}(A))
\ar[d]^{(\place)^{\#}}
\\
\mathcal{H}om^{\tau_{A}^{\#}}(A^{\#},B^{+}(A)^{\#})[1]
\ar[r]^-{\sim}
&
\operatorname{Der}(\tilde{B}^{+}(A)^{\#})
\\
\mathcal{H}om^{\tau^{A^{\#}}}(A^{\#},\Omega^{+}(A^{\#}))[1]
\ar[u]^{\mathcal{H}om(A^{\#},j_{A})}
\ar[r]^-{\sim}
&
\operatorname{Der}(\tilde{\Omega}^{+}(A^{\#}))
\ar[u]^{\mathcal{H}om(\tilde{j}_{A}^{-1},\tilde{j}_{A})}
}
\]
where we have employed the abuse of notation explained in the penultimate paragraph of Subsection \ref{subsec:int} for the left vertical maps, and the horizontal maps 
are given as follows. 
Using the same abuse of notation, the upper horizontal morphism is given by the composition of 
$\mathcal{H}om^{\tau_{A}}(B^{+}(A),A) \rightarrow \mathcal{H}om^{\tilde{\tau}_{A}}(\tilde{B}^{+}(A),A)$ induced by the quasi-isomorphism $\tilde{B}^{+}(A) \rightarrow B^{+}(A)$ 
recalled in the third paragraph of Subsubsection \ref{subsubsec:bar}, with $\delta_{A}$. 
The bottom horizontal morphism is given by the composition of 
$\mathcal{H}om^{\tau^{A^{\#}}}(A^{\#},\Omega^{+}(A^{\#})) \rightarrow \mathcal{H}om^{\tilde{\tau}^{A^{\#}}}(A^{\#},\tilde{\Omega}^{+}(A^{\#}))$ induced by the quasi-isomorphism 
$\Omega^{+}(A^{\#}) \rightarrow \tilde{\Omega}^{+}(A^{\#})$ 
recalled in the second paragraph of Subsubsection \ref{subsubsec:cobar}, with $\delta^{A^{\#}}$.  
The middle horizontal map is just given in order to make the lower square diagram commute. 
We define thus a dg Lie algebra structure on $\mathcal{H}om^{\tau_{A}^{\#}}(A^{\#},B^{+}(A)^{\#})[1]$. 
Hence, each of the horizontal maps are thus quasi-isomorphisms of dg Lie algebras, and moreover the right vertical maps of the lower square diagram are isomorphisms of dg Lie algebras. 
Since the right vertical map $(\place)^{\#}$ is an anti-isomorphism of dg Lie algebras by the comments in the last paragraph of Subsubsection \ref{subsubsec:ger}, 
the same holds for the upper most left vertical map $(\place)^{\#}$. 

Furthermore, we also have the diagram 
\[
\xymatrix 
{
\mathcal{H}om^{\tau^{A^{\#}}}(A^{\#},\Omega^{+}(A^{\#}))[1]
\ar[d]^{\gamma}
\ar[r]^-{\sim}
&
\operatorname{Der}(\tilde{\Omega}^{+}(A^{\#}))
\\
\mathcal{H}om^{\tau_{\Omega^{+}(A^{\#})}}(B^{+}(\Omega^{+}(A^{\#})),\Omega^{+}(A^{\#}))[1]
\ar[r]^-{\sim}
\ar[d]^{\mathcal{H}om(B^{+}(j_{A}^{-1}),j_{A})}
&
\operatorname{Coder}(\tilde{B}^{+}(\Omega^{+}(A^{\#})))
\ar[d]^{\mathcal{H}om(\tilde{B}^{+}(j_{A}^{-1}),\tilde{B}^{+}(\tilde{j}_{A}))}
\\
\mathcal{H}om^{\tau_{E(A)}}(B^{+}(E(A)),E(A))[1]
\ar[r]^-{\sim}
&
\operatorname{Coder}(\tilde{B}^{+}(E(A)))
}
\]
where we have employed the abuse of notation explained in the penultimate paragraph of Subsection \ref{subsec:int} for the vertical maps, 
and the morphism $\gamma$ is induced by $\mathcal{H}om_{\Omega^{+}(A^{\#})^{e}}(\gamma^{A^{\#}},A)$ by making use of the identifications 
$\mathcal{H}om_{\Lambda^{e}}(\Lambda \otimes V \otimes \Lambda,\Lambda) \simeq \mathcal{H}om(V,\Lambda)$ of graded modules over $k$ 
(for $\Lambda$ an augmented dg algebra and $V$ a graded module over $k$), which in our case are also compatible with the differentials involved. 
Using the same abuse of notation, the upper horizontal map of this diagram is given by the composition of 
$\mathcal{H}om^{\tau^{A^{\#}}}(A^{\#},\Omega^{+}(A^{\#})) \rightarrow \mathcal{H}om^{\tilde{\tau}^{A^{\#}}}(A^{\#},\tilde{\Omega}^{+}(A^{\#}))$ induced by the quasi-isomorphism 
$\Omega^{+}(A^{\#}) \rightarrow \tilde{\Omega}^{+}(A^{\#})$ 
recalled in the second paragraph of Subsubsection \ref{subsubsec:cobar}, with $\delta^{A^{\#}}$. 
The middle and lower horizontal morphisms are given by the composition of 
$\mathcal{H}om^{\tau_{\Lambda}}(B^{+}(\Lambda),\Lambda) \rightarrow \mathcal{H}om^{\tilde{\tau}_{\Lambda}}(\tilde{B}^{+}(\Lambda),\Lambda)$ induced by the quasi-isomorphism 
$\tilde{B}^{+}(\Lambda) \rightarrow B^{+}(\Lambda)$ recalled in the third paragraph of Subsubsection \ref{subsubsec:bar}, with $\delta_{\Lambda}$, for $\Lambda = \Omega^{+}(A^{\#})$ and 
$\Lambda = E(A)$, respectively. 
Thus, each of the horizontal maps is a quasi-isomorphism of dg Lie algebras, and moreover the right vertical maps of the lower square diagram are clearly isomorphisms of dg Lie algebras. 
By the explicit expression of the morphisms involved it is straightforward to show that the upper left vertical map $\gamma$ is in fact an anti-morphism of dg Lie algebras. 
Note that it is a quasi-isomorphism, and in fact the composition of the left vertical maps of the two previous diagrams give a quasi-inverse to the map \eqref{eq:quasi-isohoch2}.
This proves that the latter induces a morphism of graded Lie algebras on Hochschild cohomology, as claimed before. 

Moreover, it is a direct consequence of the previous considerations that the isomorphism of dg bimodules over the Hochschild cohomology $HH^{\bullet}(E(A))$ 
between the Hochschild homology groups $HH_{\bullet}(A)^{\#}$ and $HH_{\bullet}(E(A))$ given in Theorem \ref{theorem:koszuldual} is also a morphism of graded Lie bimodules. 
We first note that the left and right Lie actions on the Hochschild homology group $HH_{\bullet}(\Lambda)$ over the graded Lie algebra $HH^{\bullet}(\Lambda)[1]$, where $\Lambda$ is 
an augmented dg algebra, are induced by the graded commutators of the corresponding dg left or right action of the augmented dg algebra 
$\mathcal{H}om^{\tau_{\Lambda}}(B^{+}(\Lambda),\Lambda)$ on $\Lambda \otimes_{\tau_{\Lambda}} B^{+}(\Lambda)$ with the Connes map $B_{\Lambda}$, which is defined on $\Lambda \otimes_{\tau_{\Lambda}} B^{+}(\Lambda)$ by the formula 
\begin{equation}
\label{eq:con}
   B_{\Lambda} (\lambda_{0} \otimes [\lambda_{1} | \dots | \lambda_{n}]) = \sum_{i=0}^{n} (-1)^{\epsilon_{i+1} \epsilon^{i}} 1_{\Lambda} \otimes [\lambda_{i+1} | \dots | \lambda_{n} | \lambda_{0} | \dots | \lambda_{i}],
\end{equation}
where $\lambda_{j} \in \Lambda$, for $j = 0, \dots, n$, $\epsilon_{i} = (\sum_{j=0}^{i-1} \deg \lambda_{j}) - i$ and $\epsilon^{i} = (\sum_{j=i+1}^{n} \deg \lambda_{j}) - n + i$ (see \cite{T}, Section 2.1, (11)). 
More precisely, consider the augmented dg algebra $k[\varepsilon]/(\varepsilon^{2})$, where $\varepsilon$ has complete (cohomological) degree $(-1,0_{G'})$ and the differential is trivial. 
It is clearly a graded commutative algebra. 
The Connes map $B_{\Lambda}$ defines a left action of it on $\Lambda \otimes_{\tau_{\Lambda}} B^{+}(\Lambda)$ by the formula 
\[     \varepsilon \cdot (\lambda_{0} \otimes [\lambda_{1} | \dots | \lambda_{n}]) = B_{\Lambda} (\lambda_{0} \otimes [\lambda_{1} | \dots | \lambda_{n}]),     \]
(see \cite{T}, Prop 2.1), and a right action by the usual expression $v \cdot \varepsilon = (-1)^{\deg v} \varepsilon \cdot v$, for $v \in \Lambda \otimes_{\tau_{\Lambda}} B^{+}(\Lambda)$ 
homogeneous (this follows from the identities $B_{\Lambda}^{2} = 0$ and 
$B_{\Lambda} \circ D' = - D' \circ B_{\Lambda}$, where $D'$ is the differential of $\Lambda \otimes_{\tau_{\Lambda}} B^{+}(\Lambda)$). 
Moreover, these two actions clearly commute, so $\Lambda \otimes_{\tau_{\Lambda}} B^{+}(\Lambda)$ is a dg bimodule over $k[\varepsilon]/(\varepsilon^{2})$. 
The left Lie action of $\phi \in \mathcal{H}om^{\tau_{\Lambda}}(B^{+}(\Lambda),\Lambda)$ on $\bar{\lambda} \in \Lambda \otimes_{\tau_{\Lambda}} B^{+}(\Lambda)$, 
which we denote by $L_{\phi}(\bar{\lambda})$ is given by the graded commutator of 
the left action operator of the augmented dg algebra $\mathcal{H}om^{\tau_{\Lambda}}(B^{+}(\Lambda),\Lambda)$ on $\Lambda \otimes_{\tau_{\Lambda}} B^{+}(\Lambda)$ 
with the left action operator of $k[\varepsilon]/(\varepsilon^{2})$ on the same space, 
\textit{i.e.} $L_{\phi}(\bar{\lambda}) = \epsilon \cdot (\phi \cdot \bar{\lambda}) - (-1)^{\deg \phi}  \phi \cdot (\epsilon \cdot \bar{\lambda})$. 
The action on the dual space $(\Lambda \otimes_{\tau_{\Lambda}} B^{+}(\Lambda))^{\#}$ is given by the usual formula in representation theory of dg Lie algebras
$L_{\phi}(\bar{\lambda}') = - (-1)^{\deg \bar{\lambda}' \deg \phi} \bar{\lambda}' \circ L_{\phi}$, where $\bar{\lambda}' \in (\Lambda \otimes_{\tau_{\Lambda}} B^{+}(\Lambda))^{\#}$. 
In order to be coherent with the previous left Lie action convention on the graded dual, we remark that in the case of a dg bimodule $M$ over $k[\varepsilon]/(\varepsilon^{2})$ 
we shall use the following convention (only of interest to us for the case $M = \Lambda \otimes_{\tau_{\Lambda}} B^{+}(\Lambda)$) for the left action (and thus right by the usual Koszul sign rule) on the graded dual $M^{\#}$: 
\[     (\epsilon \cdot f) (m) = - (-1)^{\deg f} f (\epsilon m).     \] 
For more details on these definitions and the fact that the usual dg bimodule structures together with the Lie module structures are part (together with the Gerstenhaber algebra structure on Hochschild cohomology) of a \emph{Tamarkin-Tsygan (pre)calculus}, we refer the reader to \cite{T}, in particular Prop. 2.9 and Subsection 4.3. 

On the other hand, we recall that the quasi-isomorphism of dg bimodules over $\mathcal{H}om^{\tau_{E(A)}}(B^{+}(E(A),E(A))$ stated in Theorem \ref{theorem:koszuldual} is given by 
$(\mathrm{id}_{E(A)} \otimes (B^{+}(j_{A}) \circ \beta^{A^{\#}})) \circ \tau_{A^{\#},B^{+}(A)^{\#}} \circ \iota_{A,B^{+}(A)}^{-1}$. 
Our claim is tantamount to the fact that the previous map induces a morphism of graded bimodules over $k[\varepsilon]/(\varepsilon^{2})$ 
between the Hochschild homology groups $HH_{\bullet}(A)^{\#}$ and $HH_{\bullet}(E(A))$. 
By the comments in the last paragraph of Subsubsection \ref{subsubsec:cobar}, it has an explicit quasi-inverse of the form 
\[     \iota_{A,B^{+}(A)} \circ \tau_{B^{+}(A)^{\#},A^{\#}} \circ  (j_{A} \otimes \mathrm{id}_{A^{\#}}) \circ (\mathrm{id}_{\Omega^{+}(A^{\#})} \otimes_{\Omega^{+}(A^{\#})^{e}} \gamma^{A^{\#}}) \circ (j_{A}^{-1} \otimes B^{+}(j_{A}^{-1})).     \] 
Let us denote it by $\gamma'$. 
Hence, our claim is equivalent to the fact that $\gamma'$ is a morphism of dg bimodules over $k[\varepsilon]/(\varepsilon^{2})$, 
which reduces to proving that $\gamma' \circ B_{E(A)} = B_{A}^{\#} \circ \gamma'$. 
By the explicit expression of the Connes map \eqref{eq:con}, we see that $(j_{A} \otimes B^{+}(j_{A})) \circ B_{\Omega^{+}(A^{\#})} = B_{E(A)} \circ (j_{A} \otimes B^{+}(j_{A}))$, so it suffices to show that $\gamma'' \circ B_{\Omega^{+}(A^{\#})} = B_{A}^{\#} \circ \gamma''$, where $\gamma''$ is given by 
\[     \iota_{A,B^{+}(A)} \circ \tau_{B^{+}(A)^{\#},A^{\#}} \circ  (j_{A} \otimes \mathrm{id}_{A^{\#}}) \circ (\mathrm{id}_{\Omega^{+}(A^{\#})} \otimes_{\Omega^{+}(A^{\#})^{e}} \gamma^{A^{\#}}).     \] 
Moreover, by transport of structures by means of the composite morphism given by $\iota_{A,B^{+}(A)} \circ \tau_{B^{+}(A)^{\#},A^{\#}} \circ  (j_{A} \otimes \mathrm{id}_{A^{\#}})$, 
it is easy to show that the left action of $k[\varepsilon]/(\varepsilon^{2})$ on the (intermediate) graded $k$-module $\Omega^{+}(A^{\#}) \otimes A^{\#}$ is given by 
\begin{multline*}
     \epsilon \cdot (\cl{\omega_{1} | \dots | \omega_{m}} \otimes \omega_{0}) 
     \\
     = \omega_{0}(1_{A}) \sum_{i=1}^{m} (-1)^{\bar{\epsilon}_{i+1} \bar{\epsilon}^{i} + \bar{\epsilon}_{n} + \deg \omega_{i}} \cl{\omega_{i+1} | \dots | \omega_{m} | \omega_{1} | \dots | \omega_{i-1}} \otimes \omega_{i},     
\end{multline*}
where $\omega_{i} \in A^{\#}$ are homogeneous, for $j = 0, \dots, m$, $\bar{\epsilon}_{i} = (\sum_{j=1}^{i-1} \deg \omega_{j}) + i - 1$ and $\bar{\epsilon}^{i} = (\sum_{j=i+1}^{m} \deg \omega_{j}) + m - i$. 
We shall denote by $B'$ the operator defined on $\Omega^{+}(A^{\#}) \otimes A^{\#}$ given by left multiplication by $ \varepsilon$. 
Our original claim is now equivalent to prove that 
\[     (\mathrm{id}_{\Omega^{+}(A^{\#})} \otimes_{\Omega^{+}(A^{\#})^{e}} \gamma^{A^{\#}}) \circ B_{\Omega^{+}(A^{\#})} = B' \circ (\mathrm{id}_{\Omega^{+}(A^{\#})} \otimes_{\Omega^{+}(A^{\#})^{e}} \gamma^{A^{\#}}),     \]
which is straightforward. 

We may summarize the previous comments in the following result. 
\begin{theorem}
\label{theorem:koszuldual2}
Let $A$ be an augmented dg algebra which is assumed to be Adams connected. 
The quasi-isomorphism of augmented dg algebras 
\[     \mathcal{H}om^{\tau_{E(A)}} (B^{+}(E(A)),E(A)) \rightarrow \mathcal{H}om^{\tau_{A}} (B^{+}(A),A)     \]
given in Theorem \ref{theorem:koszuldual} yields an isomorphism of augmented graded algebras 
\[     HH^{\bullet}(E(A)) \rightarrow HH^{\bullet}(A),     \] 
which is compatible with Gerstenhaber brackets (by adapating ideas from \cite{FMT}), so an isomorphism of Gerstenhaber algebras. 
On the other hand, the quasi-isomorphism of dg $\mathcal{H}om^{\tau_{E(A)}} (B^{+}(E(A)),E(A))$-bimodules 
\[     (A \otimes_{\tau_{A}} B^{+}(A))^{\#} \rightarrow E(A) \otimes_{\tau_{E(A)}} B^{+}(E(A)),        \]
also stated in that proposition, gives an isomorphism $HH_{\bullet}(A)^{\#} \rightarrow HH_{\bullet}(E(A))$ of graded bimodules over 
$HH^{\bullet}(E(A))$ and of graded Lie modules , where the domain has structure of bimodule given by the isomorphism $HH^{\bullet}(E(A)) \rightarrow HH^{\bullet}(A)$. 
The previous isomorphism between the homology groups is also compatible with the left actions of the graded Lie algebra $HH^{\bullet}(E(A))[1] \rightarrow HH^{\bullet}(A)[1]$. 
\end{theorem}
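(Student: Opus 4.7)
The plan is to upgrade the quasi-isomorphisms obtained in Theorem \ref{theorem:koszuldual} from the level of augmented dg algebras and dg bimodules to morphisms respecting Gerstenhaber brackets and the left Lie module action encoded by the Connes operator. The principal device is the identification of the Hochschild cochain complex $\mathcal{H}om^{\tau_{\Lambda}}(B^{+}(\Lambda),\Lambda)[1]$ with the dg Lie algebra $\operatorname{Coder}(\tilde{B}^{+}(\Lambda))$, and dually of $\mathcal{H}om^{\tau^{C}}(C,\Omega^{+}(C))[1]$ with $\operatorname{Der}(\tilde{\Omega}^{+}(C))$, as recalled in Subsubsection \ref{subsubsec:ger}.

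For the Gerstenhaber algebra statement I would first factor the quasi-isomorphism of Theorem \ref{theorem:koszuldual} as the composition of \eqref{eq:quasi-isohoch1} with the graded-dualization isomorphism $\mathcal{H}om^{\tau_{A}}(B^{+}(A),A) \to \mathcal{H}om^{\tau_{A}^{\#}}(A^{\#},E(A))$ of Proposition \ref{prop:prehoch}. This dualization fits into a commutative square whose right column is $(\place)^{\#} : \operatorname{Coder}(\tilde{B}^{+}(A)) \to \operatorname{Der}(\tilde{B}^{+}(A)^{\#})$; since graded dualization is an anti-isomorphism of dg Lie algebras (last paragraph of Subsubsection \ref{subsubsec:ger}), the same property is transferred to the left column, which is exactly the compatibility with the Gerstenhaber brackets up to sign. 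Next I would use a second commutative diagram whose right column is induced by $\tilde{B}^{+}(j_{A}) : \tilde{B}^{+}(\Omega^{+}(A^{\#})) \to \tilde{B}^{+}(E(A))$ and whose left column involves the morphism $\gamma$ coming from $\gamma^{A^{\#}}$ through the standard identification $\mathcal{H}om_{\Omega^{+}(A^{\#})^{e}}(\Omega^{+}(A^{\#}) \otimes V \otimes \Omega^{+}(A^{\#}), \Omega^{+}(A^{\#})) \simeq \mathcal{H}om(V,\Omega^{+}(A^{\#}))$; because $\gamma^{A^{\#}}$ is a quasi-inverse of $\mathrm{id} \otimes \beta^{A^{\#}}$ and by the identities \eqref{eq:candual1}--\eqref{eq:candual2}, the composition of the two left vertical legs yields a quasi-inverse of the cohomology isomorphism \eqref{eq:quasi-isohoch2}, while its anti-morphism property with respect to the Lie brackets follows from the explicit formulas \eqref{eq:gerbra}--\eqref{eq:gercobra}.

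For the compatibility with the Lie module structure I would use that the left Lie action of $HH^{\bullet}(\Lambda)[1]$ on $HH_{\bullet}(\Lambda)$ is generated by the graded commutator of the dg bimodule action of $\mathcal{H}om^{\tau_{\Lambda}}(B^{+}(\Lambda),\Lambda)$ on $\Lambda \otimes_{\tau_{\Lambda}} B^{+}(\Lambda)$ with the Connes operator $B_{\Lambda}$ of \eqref{eq:con}, so that $\Lambda \otimes_{\tau_{\Lambda}} B^{+}(\Lambda)$ becomes a dg bimodule over $k[\varepsilon]/(\varepsilon^{2})$. Compatibility is then equivalent to the statement that an explicit quasi-inverse $\gamma'$ of the map in Theorem \ref{theorem:koszuldual}, built from $\tau_{A^{\#},B^{+}(A)^{\#}}$, $j_{A}$, and $\mathrm{id} \otimes_{\Omega^{+}(A^{\#})^{e}} \gamma^{A^{\#}}$, intertwines the Connes operators of $E(A)$ and of $A^{\#}$-homology. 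After using that $(j_{A} \otimes B^{+}(j_{A})) \circ B_{\Omega^{+}(A^{\#})} = B_{E(A)} \circ (j_{A} \otimes B^{+}(j_{A}))$ (immediate from \eqref{eq:con} and functoriality of $B^{+}$), the identity reduces to $(\mathrm{id} \otimes_{\Omega^{+}(A^{\#})^{e}} \gamma^{A^{\#}}) \circ B_{\Omega^{+}(A^{\#})} = B' \circ (\mathrm{id} \otimes_{\Omega^{+}(A^{\#})^{e}} \gamma^{A^{\#}})$, where $B'$ is the Connes-type operator on $\Omega^{+}(A^{\#}) \otimes A^{\#}$ obtained by transport of structure; this is a bounded computation using the explicit form of $\gamma^{A^{\#}}$ and of the Connes cyclic rotation.

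The main obstacle will be the bookkeeping of signs and of the alternation between reduced and unreduced (co)bar constructions. The sign conventions in Getzler's isomorphisms $\delta_{A}$ and $\delta^{C}$, in the Gerstenhaber bracket formulas \eqref{eq:gerbra}--\eqref{eq:gercobra}, in the graded-dual anti-Lie morphism, and in the transported action of $\varepsilon$ on $\Omega^{+}(A^{\#}) \otimes A^{\#}$ all interact nontrivially; once these are harmonized consistently across the two diagrams and the quasi-inverse $\gamma'$, everything collapses to the diagrammatic identities above and the Connes-type identity verified at the end, and the statement follows.
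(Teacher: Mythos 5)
Your proposal follows essentially the same route as the paper: the same two commutative diagrams relating the twisted convolution complexes to $\operatorname{Coder}(\tilde{B}^{+}(-))$ and $\operatorname{Der}(\tilde{\Omega}^{+}(-))$, with the anti-isomorphism property of graded dualization and of $\gamma$ combining to give bracket compatibility, and the same reduction of the Lie module statement to the intertwining of the Connes operators via the transported operator $B'$ on $\Omega^{+}(A^{\#}) \otimes A^{\#}$. The argument is correct and matches the paper's proof in both structure and the key identities invoked.
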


\begin{remark}
\label{rem:tt}
The previous result means that the Tamarkin-Tsygan precalculus of $A$ and of $E(A)$ are dual. 
However, since we have further proved that the previous isomorphisms are compatible with the Connes' maps, we may conclude that $A$ and $E(A)$ have dual Tamarkin-Tsygan calculus 
(following the notation of \cite{TT}, Def. 3.2.1 and 3.2.2).
\end{remark}

\section{{\texorpdfstring{Koszul duality and $A_{\infty}$-(co)algebras}{Koszul duality and A-infinity-(co)algebras}}}
\label{sec:ainf}

\subsection{{\texorpdfstring{Generalities on $A_{\infty}$-(co)algebras}{Generalities on A-infinity-(co)algebras}}}
\label{subsec:gena}

For the following definitions we refer to \cite{Prou}, Chapitre 3, Section 3.1 (or also \cite{LH}, D\'ef. 1.2.1.1, 1.2.1.8, using the obvious equivalences 
between non(co)unitary objects and (co)augmented ones), even though we do not follow the same sign conventions and they do not consider any Adams grading 
(see for instance \cite{LPWZ09} for several uses of Adams grading in $A_{\infty}$-algebra theory). 
We first recall that an \emph{augmented $A_{\infty}$-algebra} structure on a cohomological graded vector space $A$ is the following data: 
\begin{itemize}
\item[(i)] a collection of maps $m_{i} : A^{\otimes i} \rightarrow A$ for $i \in \NN$ of cohomological degree $2-i$ 
and Adams degree zero satisfying the \emph{Stasheff identities} given by   
\begin{equation}
\label{eq:ainftyalgebra}
   \sum_{(r,s,t) \in \mathcal{I}_{n}} (-1)^{r + s t}  m_{r + 1 + t} \circ (\mathrm{id}_{A}^{\otimes r} \otimes m_{s} \otimes \mathrm{id}_{A}^{\otimes t}) = 0,
\end{equation} 
for $n \in \NN$, where $\mathcal{I}_{n} = \{ (r,s,t) \in \NN_{0} \times \NN \times \NN_{0} : r + s + t = n \}$. 
We shall denote the (sum of) morphism(s) on the left by $\mathrm{SI}^{m_{\bullet}}(n)$. 
\item[(ii)] a map $\eta_{A} : k \rightarrow A$ of complete degree $0_{G}$ such that 
\[     m_{i} \circ (\mathrm{id}_{A}^{\otimes r} \otimes \eta_{A} \otimes \mathrm{id}_{A}^{\otimes t})     \]
vanishes for all $i \neq 2$ and all $r, t \geq 0$ such that $r+1+t = i$, and 
\[     m_{2} \circ (\mathrm{id}_{A} \otimes \eta_{A}) = \mathrm{id}_{A} = m_{2} \circ (\eta_{A} \otimes \mathrm{id}_{A}).    \]
\item[(iii)] a map $\epsilon_{A} : A \rightarrow k$ of complete degree $0_{G}$ such that $\epsilon_{A} \circ \eta_{A} = \mathrm{id}_{k}$, $\epsilon \circ m_{2} = \epsilon_{A}^{\otimes 2}$, and 
$\epsilon_{A} \circ m_{i} =0$, for all $i \in \NN \setminus \{ 2 \}$. 
\end{itemize}
It is further called \emph{minimal} if $m_{1}$ vanishes. 
If we do not assume the items (ii) and (iii) in the definition, then $A$ is called an \emph{$A_{\infty}$-algebra}. 

We recall that a family of linear maps $\{f_{i} : C \rightarrow C_{i}\}_{i \in \NN}$, where $C$ and $C_{i}$, for $i \in \NN$, are vector spaces, is called \emph{locally finite} if, for all $c \in C$, 
there exists a finite subset $S \subseteq \NN$, which depends on $c$, such that $f_{i}(c)$ vanishes for all $i \in \NN \setminus S$. 
An \emph{coaugmented $A_{\infty}$-coalgebra} structure on a homological graded vector space $C$ 
is the following data: 
\begin{itemize}
\item[(i)] a locally finite collection of maps $\Delta_{i} : C \rightarrow C^{\otimes i}$ for $i \in \NN$ of homological degree $i-2$ and Adams degree zero satisfying the following identities  
\begin{equation}
\label{eq:ainftycoalgebra}
   \sum_{(r,s,t) \in \mathcal{I}_{n}} (-1)^{r s + t}  (\mathrm{id}_{C}^{\otimes r} \otimes \Delta_{s} \otimes \mathrm{id}_{C}^{\otimes t}) \circ \Delta_{r + 1 + t} = 0,
\end{equation}
for $n \in \NN$. 
\item[(ii)] a map $\epsilon_{C} : C \rightarrow k$ of complete degree $0_{G}$ such that 
\[     (\mathrm{id}_{C}^{\otimes r} \otimes \epsilon_{C} \otimes \mathrm{id}_{C}^{\otimes t}) \circ \Delta_{i}     \]
vanishes for all $i \neq 2$ and all $r, t \geq 0$ such that $r+1+t = i$, and 
\[     (\mathrm{id}_{C} \otimes \epsilon) \circ \Delta_{2} = \mathrm{id}_{C} = (\epsilon_{C} \otimes \mathrm{id}_{C}) \circ \Delta_{2}.    \]
\item[(iii)] a map $\eta_{C} : k \rightarrow C$ of complete degree $0_{G}$ such that $\epsilon_{C} \circ \eta_{C} = \mathrm{id}_{k}$, $\Delta_{2} \circ \eta_{C}(1_{k}) = \eta_{C}(1_{k})^{\otimes 2}$, 
and $\Delta_{i} \circ \eta_{C}(1_{k}) = 0$, for all $i \in \NN \setminus \{ 2 \}$. 
\end{itemize}
We shall usually denote $\eta_{C}(1_{k})$ by $1_{C}$.  
An Adams graded coaugmented $A_{\infty}$-coalgebra $C$ is called \emph{minimal} if $\Delta_{1} = 0$. 
Note that the condition that the family $\{ \Delta_{n} \}_{n \in \NN}$ is locally finite follows from the other data if we further suppose that $\Ker(\epsilon)$ is positively graded for the Adams degree. 
Again, an \emph{$A_{\infty}$-coalgebra} is defined as the graded $k$-module $C$ provided with the maps $\{ \Delta_{i} \}_{i \in \NN}$ satisfying the identities of the first item. 

Note that an $A_{\infty}$-algebra $A$ is \emph{a fortiori} a dg k-module where the differential is given by $m_{1}$. 
Analogously, a $A_{\infty}$-algebra $A$ is also a dg k-module for the differential $\Delta_{1}$. 
Moreover, an augmented dg algebra structure on $A$ is tantamount to an augmented $A_{\infty}$-algebra structure with vanishing \emph{higher multiplications} $m_{n}$ for $n \geq 3$, 
where the differential is $m_{1}$ and the multiplication is $m_{2}$. 
In the same manner, a coaugmented dg coalgebra structure on $C$ is equivalent to a coaugmented $A_{\infty}$-coalgebra structure with vanishing \emph{higher comultiplications} $\Delta_{n}$ for $n \geq 3$, 
where the differential is $\Delta_{1}$ and the coproduct is $\Delta_{2}$. 

As for the case of augmented dg algebras, given an augmented $A_{\infty}$-algebra $A$ there exists a coaugmented dg coalgebra $B^{+}(A)$, called the \emph{(reduced) bar construction of $A$}. 
Its underlying graded coalgebra structure is given by the tensor coalgebra $\oplus_{i \in \NN_{0}} I_{A}[1]^{\otimes i}$, where $I_{A} = \operatorname{Ker}(\epsilon_{A})$. 
As before, if $n \in \NN$ we will typically denote an element $s(\bar{a}_{1}) \otimes \dots \otimes s(\bar{a}_{n}) \in I_{A}[1]^{\otimes n}$ in the form $[a_{1} | \dots | a_{n}]$, 
where $a_{1}, \dots, a_{n} \in A$, $\bar{a} \in A/k \simeq I_{A}$ denotes the canonical projection of an element $a \in A$, and $s : I_{A} \rightarrow I_{A}[1]$ is the canonical morphism of degree $- 1$ 
recalled in the third paragraph of Subsection \ref{subsec:int}.  
The coproduct is thus given by the usual deconcatenation 
\[     \Delta ([a_{1}|\dots |a_{n}]) = \sum_{i=0}^{n} [a_{1}|\dots | a_{i}] \otimes [a_{i+1} | \dots | a_{n}],     \]
where we set $[a_{i} | \dots | a_{j}] = 1_{B^{+}(A)}$ if $i > j$, for $1_{B^{+}(A)}$ the image of $1_{k}$ under the canonical inclusion $k = I_{A}[1]^{\otimes 0} \subseteq B^{+}(A)$, 
which may be also denoted by $[]$. 
The counit is defined as the canonical projection $B^{+}(A) \rightarrow I_{A}[1]^{\otimes 0} = k$, and the coaugmentation is given by the obvious inclusion $k = I_{A}[1]^{\otimes 0} \subseteq B^{+}(A)$. 
We recall that since $B^{+}(A)$ is a coaugmented tensor graded coalgebra, it is cocomplete, and its differential $B$ is defined as follows. 
It is the unique coderivation whose image lies inside the augmentation kernel $\operatorname{Ker}(\epsilon_{B^{+}(A)})$ of $B^{+}(A)$, so $B$ is thus uniquely determined by $\pi_{1} \circ B$, where $\pi_{1} : B^{+}(A) \rightarrow I_{A}[1]$ is the canonical projection (see \cite{LH}, Lemme 1.1.2.2, Sections 2.1.1 and 2.1.2, and Notation 2.2.1.4), such that this composition map is given by
the sum $b = \sum_{i \in \NN} b_{i}$, where $b_{i} : I_{A}[1]^{\otimes i} \rightarrow I_{A}[1]$ is defined as $b_{i} = - s_{I_{A}} \circ m_{i} \circ (s_{I_{A}}^{\otimes i})^{-1}$. 
In fact, equation \eqref{eq:ainftyalgebra} is precisely the condition for this coderivation to be a differential. 
Our convention for the bar construction clearly coincides with the one given for augmented dg algebras in the case the higher multiplications vanish, 
but it differs from others in the literature (\textit{e.g.} those in the thesis \cite{LH} of K. Lef\`evre-Hasegawa, Ch. 1 and 2). 

Dually, given a coaugmented $A_{\infty}$-coalgebra $C$ there exists an augmented dg algebra $\Omega^{+}(C)$, called (reduced) bar construction of $C$. 
Its underlying graded coalgebra structure is given by the tensor algebra $\oplus_{i \in \NN_{0}} J_{C}[-1]^{\otimes i}$, where $J_{C} = \operatorname{Coker}(\eta_{C})$. 
As before, if $n \in \NN$ we will typically denote an element $s^{-1}(\bar{c}_{1}) \otimes \dots \otimes s(\bar{c}_{n}) \in J_{C}[-1]^{\otimes n}$ in the form $\cl{c_{1} | \dots | c_{n}}$, 
where $c_{1}, \dots, c_{n} \in C$, $\bar{c} \in J_{C}$ denotes the canonical projection of an element $c \in C$, and $s : J_{C}[-1] \rightarrow J_{C}$ is the canonical morphism of degree $- 1$ 
recalled in the third paragraph of Subsection \ref{subsec:int}. 
The unit is given by the obvious inclusion $k = J_{C}[-1]^{\otimes 0} \subseteq \Omega^{+}(C)$, and we denote the image of $1_{k}$ under the previous map either by $1_{\Omega^{+}(C)}$ or by $\cl{}$. 
The augmentation is defined as the canonical projection $\Omega^{+}(C) \rightarrow J_{C}[-1]^{\otimes 0} = k$. 
The differential $D$ of $\Omega^{+}(C)$ is defined as follows. 
Since $\Omega^{+}(C)$ is graded free algebra, it is the unique derivation $D$ whose composition $d$ with the canonical injection $J_{C}[-1] \rightarrow \Omega^{+}(C)$, 
where we define $d = \sum_{i \in \NN} d_{i}$ for $d_{i} : J_{C}[-1] \rightarrow J_{C}[-1]^{\otimes i}$ given by $d_{i} = (-1)^{i} (s_{J_{C}[-1]}^{\otimes i})^{-1} \circ \Delta_{i} \circ s_{J_{C}[-1]}$. 
Note again that \eqref{eq:ainftycoalgebra} is exactly the condition for this derivation to be a differential. 
As before, our definition for the cobar construction coincides with the one given for coaugmented dg coalgebras in the case the higher comultiplications $\Delta_{i}$ for $i \geq 3$ vanish, 
but it differs from others in the literature (\textit{e.g.} those in the thesis \cite{LH} of K. Lef\`evre-Hasegawa, Ch. 1 and 2). 

We will also be particularly interested in the case that (co)augmented $A_{\infty}$-(co)algebras are \emph{Adams connected}, in the sense introduced in \cite{LPWZ08}, Def. 2.1, 
\textit{i.e.} a (co)augmented $A_{\infty}$-(co)algebra $A$ (resp., $C$) where the grading group $G$ is $\ZZ \times \ZZ$ such that its augmentation kernel $I_{A}$ (resp., $J_{C}$) satisfies that $\oplus_{n \in \ZZ} I_{A}^{(n,m)}$ (resp., $\oplus_{n \in \ZZ} J_{C}^{(n,m)}$) is finite dimensional, for all $m \in G' = \ZZ$, and 
either $I_{A}^{(n,m)}$ (resp., $J_{C}^{(n,m)}$) vanishes for all $n \in \ZZ$ and all $m \geq 0$, or $I_{A}^{(n,m)}$ (resp., $J_{C}^{(n,m)}$) vanishes for all $n \in \ZZ$ and all $m \leq 0$.
By the previously cited article, an Adams connected augmented $A_{\infty}$-algebra is also locally finite dimensional, and its Koszul dual $E(A) = B^{+}(A)^{\#}$ is also locally finite dimensional and 
also Adams connected (see \cite{LPWZ08}, Lemma 2.2). 

A \emph{morphism of augmented $A_{\infty}$-algebras} $f_{\bullet} : A \rightarrow B$ between two augmented $A_{\infty}$-algebras $A$ and $B$ is a collection of morphisms of the underlying graded $k$-modules 
$f_{i} : A^{\otimes n} \rightarrow B$ of complete degree $(1-i,0_{G'})$ for $i \in \NN$ such that 
\begin{equation}
\label{eq:ainftyalgebramor}
   \sum_{(r,s,t) \in \mathcal{I}_{n}} (-1)^{r + s t}  f_{r + 1 + t} \circ (\mathrm{id}_{A}^{\otimes r} \otimes m_{s}^{A} \otimes \mathrm{id}_{A}^{\otimes t}) 
   = \sum_{q \in \NN} \sum_{\bar{i} \in \NN^{q, n}} (-1)^{w} m_{q}^{B} \circ (f_{i_{1}} \otimes \dots \otimes f_{i_{q}}),
\end{equation} 
where $w = \sum_{j=1}^{q} (q-j) (i_{j} - 1)$ and $\NN^{q,n}$ is the subset of $\NN^{q}$ of elements $\bar{i} = (i_{1},\dots,i_{q})$ such that $|\bar{i}| = i_{1} + \dots + i_{q} = n$. 
We also assume that $f_{1}(1_{A}) = 1_{B}$, for all $i \geq 2$ we have that $f_{i}(a_{1}, \dots, a_{i})$ vanishes if there exists $j \in \{1, \dots, i \}$ such that $a_{j} = 1_{A}$, 
and that $\epsilon_{B} \circ f_{1} = \epsilon_{A}$ and $\epsilon_{B} \circ f_{i}$ vanishes for $i \geq 2$. 
If we do not suppose this last collection of extra-assumptions the family of maps $\{ f_{i} \}_{i \in \NN}$ is only called a \emph{morphism of $A_{\infty}$-algebras}. 
We shall denote the (sum of) morphism(s) of the left (resp., right) member of \eqref{eq:ainftyalgebramor} by $\mathrm{MI}^{m_{\bullet}}(n)_{l}$ (resp., $\mathrm{MI}^{m_{\bullet}}(n)_{r}$). 
Notice that $f_{1}$ is a morphism of dg $k$-modules for the underlying structures on $A$ and $B$. 
We say that a morphism $f_{\bullet}$ is \emph{strict} if $f_{i}$ vanishes for $i \geq 2$. 

Dually, a \emph{morphism of coaugmented $A_{\infty}$-coalgebras} $f_{\bullet} : C \rightarrow D$ between two coaugmented $A_{\infty}$-coalgebras $C$ and $D$ is a locally finite collection of morphisms of the underlying graded $k$-modules $f_{i} : C \rightarrow D^{\otimes i}$ of homological degree $i-1$ and Adams degree zero for $i \in \NN$ such that 
\begin{equation}
\label{eq:ainftycoalgebramor}
   \sum_{(r,s,t) \in \mathcal{I}_{n}} (-1)^{r s + t}  (\mathrm{id}_{D}^{\otimes r} \otimes \Delta_{s}^{D} \otimes \mathrm{id}_{D}^{\otimes t}) \circ f_{r + 1 + t} 
   = \sum_{q \in \NN} \sum_{\bar{i} \in \NN^{q, n}} (-1)^{w'} (f_{i_{1}} \otimes \dots \otimes f_{i_{q}}) \circ \Delta_{q}^{C},
\end{equation} 
where $w' = \sum_{j=1}^{q} (j-1) (i_{j} + 1)$. 
We also suppose that $\epsilon_{D} \circ f_{1} = \epsilon_{D}$, for all $i \geq 2$ and $j \in \{1, \dots, i \}$ we have that $(\mathrm{id}_{D}^{\otimes (j-1)}  \otimes \epsilon_{D} \otimes \mathrm{id}_{D}^{\otimes (i-j)}) \circ f_{i}$ vanishes, and that $f_{1} \circ \eta_{C} = \eta_{D}$ and 
$f_{i} \circ \eta_{C}$ vanishes for $i \geq 2$.
If we do not suppose this last collection of extra-assumptions the family of maps $\{ f_{i} \}_{i \in \NN}$ is only called a \emph{morphism of $A_{\infty}$-coalgebras}. 
Notice that $f_{1}$ is also a morphism of dg $k$-modules for the underlying structures on $C$ and $D$.
In this case we also say that a morphism $f_{\bullet}$ is \emph{strict} if $f_{i}$ vanishes for $i \geq 2$. 

Given $f_{\bullet} : A \rightarrow B$ a morphism of augmented $A_{\infty}$-algebras, it induces a morphism of coaugmented dg coalgebras $B^{+}(f_{\bullet}) : B^{+}(A) \rightarrow B^{+}(B)$ 
between the bar constructions as follows. 
We first note that the unitarity condition on $f_{\bullet}$ implies that it is completely detemined by the induced collection of morphism $I_{A}^{\otimes i} \rightarrow I_{B}$, which we are going to denote also by $f_{i}$,  
for $i \in \NN$. 
The morphism $B^{+}(f_{\bullet})$ being of coaugmented graded coalgebras implies that it sends $1_{B^{+}(A)}$ to $1_{B^{+}(B)}$, 
and the coaugmentation cokernel of $B^{+}(A)$ to the coaugmentation cokernel of $B^{+}(B)$. 
Moreover, since $B^{+}(B)$ is a cocomplete graded coalgebra, such a morphism of graded coalgebras is completely determined by the composition $\pi_{1}^{B} \circ B^{+}(f_{\bullet}) : B^{+}(A) \rightarrow I_{B}[1]$, 
which vanishes on $1_{B^{+}(A)}$. 
The latter composition is thus given by a sum $\sum_{i \in \NN} F_{i}$, where $F_{i} : I_{A}[1]^{\otimes i} \rightarrow I_{B}[1]$, which we define to be $F_{i} = s_{I_{B}} \circ f_{i} \circ (s_{I_{A}}^{\otimes i})^{-1}$, for 
$i \in \NN$. 
In fact, \eqref{eq:ainftyalgebramor} is precisely the condition for this morphism to commute with the differentials. 

Dually, given $f_{\bullet} : C \rightarrow D$ a morphism of coaugmented $A_{\infty}$-coalgebras, it induces a morphism of augmented dg algebras $\Omega^{+}(f_{\bullet}) : \Omega^{+}(C) \rightarrow \Omega^{+}(D)$ 
between the cobar constructions as follows. 
We first note that the counitarity condition on $f_{\bullet}$ implies that it is completely detemined by the induced collection of morphism $J_{C} \rightarrow J_{D}^{\otimes i}$, 
which we are going to denote also by $f_{i}$, for $i \in \NN$. 
We suppose that it sends $1_{B^{+}(A)}$ to $1_{B^{+}(B)}$, and the augmentation kernel of $\Omega^{+}(C)$ to the augmentation kernel of $\Omega^{+}(D)$. 
Moreover, since $\Omega^{+}(C)$ is a free graded algebra, such a morphism $B^{+}(f_{\bullet})$ of graded algebras is completely determined by the composition of the canonical inclusion $J_{C}[-1] \rightarrow \Omega^{+}(C)$ with it. 
Let us denote this latter composition by $F$. 
Hence, $F = \sum_{i \in \NN} F_{i}$, where $F_{i} : J_{C}[-1] \rightarrow J_{D}[-1]^{\otimes i}$, which we define to be $F_{i} = (-1)^{i+1} (s_{J_{D}[-1]}^{\otimes i})^{-1} \circ f_{i} \circ s_{J_{C}[-1]}$, for 
$i \in \NN$. 
As expected, \eqref{eq:ainftycoalgebramor} is precisely the condition for this morphism to commute with the differentials. 
We remark that our definition of $B^{+}(f_{\bullet})$ ($\Omega^{+}(f_{\bullet})$) agrees with the corresponding one for (co)augmented dg (co)algebras in that case if the morphism $f_{\bullet}$ 
is further assumed to be strict. 

A morphism of (co)augmented $A_{\infty}$-(co)algebras is called a \emph{quasi-isomorphism} if the map $f_{1}$ is so. 
The same definition may be stated for the nonaugmented case. 
We further say that a morphism $f_{\bullet}$ of coaugmented $A_{\infty}$-coalgebras is a \emph{weak equivalence} provided its cobar $\Omega^{+}(f_{\bullet})$ is a quasi-isomorphism of augmented dg algebras. 
Note that a morphism of augmented $A_{\infty}$-algebras is a quasi-isomorphism if and only if $B^{+}(f_{\bullet})$ is also (see \cite{Prou}, Thm. 3.25). 
We refer to \cite{Prou}, Sections 3.2 and 3.3 (D\'ef. 3.3, 3.4, and 3.11), or \cite{LH}, Sections 1.2 and 1.3 for more details on these definitions (though we follow a different sign convention), and 
we remark that these morphisms are supposed to preserve the Adams degree (\textit{cf}. \cite{LPWZ09}, Section 2). 

Notice that if $C$ is a coaugmented dg coalgebra and $A$ is an augmented $A_{\infty}$-algebra, the dg $k$-module $\mathcal{H}om(C,A)$ has in fact a structure of augmented $A_{\infty}$-algebra where
$m_{1}$ is given by the usual differential $d_{\mathcal{H}om(C,A)}(\phi) = m_{1}^{A} \circ \phi - (-1)^{\deg \phi} \phi \circ d_{C}$. 
Indeed, if we further define 
\[       m_{n} (\phi_{1} \otimes \dots \otimes \phi_{n}) = m_{n}^{A} \circ (\phi_{1} \otimes \dots \otimes \phi_{n}) \circ \Delta_{C}^{(n)},     \]
for $n \geq 2$, $1_{\mathcal{H}om(C,A)} = \eta_{A} \circ \epsilon_{C}$ and $\epsilon_{\mathcal{H}om(C,A)} (\phi) = \epsilon_{A} \circ \phi \circ \eta_{C}(1_{k})$, 
it is easily verified that they provide the structure of augmented $A_{\infty}$-algebra on $\mathcal{H}om(C,A)$. 
Furthermore, if $f_{\bullet} : A \rightarrow B$ is a morphism of augmented $A_{\infty}$-algebras, then the collection of morphisms 
\[      f_{n}^{*} : \mathcal{H}om(C,A)^{\otimes n} \rightarrow \mathcal{H}om(C,B),       \]
for $n \in \NN$, of graded $k$-modules of complete degree $(1,0_{G'})$ given by $f_{1}^{*}(\phi) = f_{1} \circ \phi$, and 
\[      f_{n}^{*}(\phi_{1} \otimes \dots \otimes \phi_{n}) = f_{n} \circ (\phi_{1} \otimes \dots \otimes \phi_{n}) \circ \Delta_{C}^{(n)},       \]
for $n \geq 2$, is a morphism of augmented $A_{\infty}$-algebras. 

Dually, if $C$ is a coaugmented $A_{\infty}$-coalgebra and $A$ is an augmented dg algebra, the dg $k$-module $\mathcal{H}om(C,A)$ has in fact a structure of augmented $A_{\infty}$-algebra where
$m_{1}$ is also given by the usual differential $d_{\mathcal{H}om(C,A)}(\phi) = d_{A} \circ \phi - (-1)^{\deg \phi} \phi \circ \Delta_{1}^{C}$. 
In this case the rest of the structure is given by 
\[       m_{n} (\phi_{1} \otimes \dots \otimes \phi_{n}) = (-1)^{n (\deg \phi_{1} + \dots + \deg \phi_{n} + 1)} \mu_{A}^{(n)} \circ (\phi_{1} \otimes \dots \otimes \phi_{n}) \circ \Delta_{n}^{C},     \]
for $n \geq 2$, $1_{\mathcal{H}om(C,A)} = \eta_{A} \circ \epsilon_{C}$ and $\epsilon_{\mathcal{H}om(C,A)} (\phi) = \epsilon_{A} \circ \phi \circ \eta_{C}(1_{k})$. 
Moreover, if $f_{\bullet} : C \rightarrow D$ is a morphism of coaugmented $A_{\infty}$-coalgebras, then the collection of morphisms 
\begin{equation}     
\label{eq:cambiocinfcoal}
 (f_{n})_{*} : \mathcal{H}om(D,A)^{\otimes n} \rightarrow \mathcal{H}om(C,A),       
\end{equation}
for $n \in \NN$, of graded $k$-modules of complete degree $(1,0_{G'})$ given by $(f_{1})_{*}(\phi) = \phi \circ f_{1}$, and 
\[      (f_{n})_{*}(\phi_{1} \otimes \dots \otimes \phi_{n}) = (-1)^{(n-1)(\deg \phi_{1} + \dots + \phi_{n} + 1)} \mu_{A}^{(n)} \circ (\phi_{1} \otimes \dots \otimes \phi_{n}) \circ f_{n},       \]
for $n \geq 2$, is a morphism of augmented $A_{\infty}$-algebras.

If $f_{\bullet} : A \rightarrow A'$ and $g_{\bullet} : A' \rightarrow B$ are morphisms of augmented $A_{\infty}$-algebras, the composition $g_{\bullet} \circ f_{\bullet}$ is the morphism of augmented $A_{\infty}$-algebras 
given by the collection of maps $\{ (g \circ f)_{n} : A^{\otimes n} \rightarrow B \}_{n \in \NN}$ defined as  
\begin{equation}
\label{eq:ainftyalgebramorcomp}
   (g \circ f)_{n} = \sum_{q \in \NN} \sum_{\bar{i} \in \NN^{q, n}} (-1)^{w} g_{q} \circ (f_{i_{1}} \otimes \dots \otimes f_{i_{q}}),
\end{equation} 
where $w = \sum_{j=1}^{q} (q-j) (i_{j} - 1)$.
Dually, if $f_{\bullet} : C \rightarrow C'$ and $g_{\bullet} : C' \rightarrow D$ are morphisms of coaugmented $A_{\infty}$-coalgebras, the composition $g_{\bullet} \circ f_{\bullet}$ is the morphism of coaugmented $A_{\infty}$-coalgebras given by the collection of maps $\{ (g \circ f)_{n} : C \rightarrow D^{\otimes n} \}_{n \in \NN}$ of the form  
\begin{equation}
\label{eq:ainftycoalgebramorcomp}
   (g \circ f)_{n} = \sum_{q \in \NN} \sum_{\bar{i} \in \NN^{q, n}} (-1)^{w'} (g_{i_{1}} \otimes \dots \otimes g_{i_{q}}) \circ f_{q},
\end{equation} 
where $w' = \sum_{j=1}^{q} (j-1) (i_{j} + 1)$. 

We remark that the previous construction defines an augmented $A_{\infty}$-algebra structure on the graded dual $C^{\#}$ of $C$. 
If $C$ is Adams connected, we see that $\Omega^{+}(C)^{\#}$ is isomorphic to $B^{+}(C^{\#})$ (using the isomorphism $j^{C}$ defined in Subsubsection \ref{subsubsec:barcobar}). 
In this case we get that a quasi-isomorphism of Adams connected coaugmented $A_{\infty}$-coalgebras is a weak equivalence (the converse is always true), 
for a quasi-isomorphism $C \rightarrow D$ induces a quasi-isomorphism of augmented $A_{\infty}$-algebras $D^{\#} \rightarrow C^{\#}$, which induces a quasi-isomorphisms between the bar constructions 
$B^{+}(D^{\#}) \rightarrow B^{+}(C^{\#})$, and by the previously recalled isomorphism we get a quasi-isomorphism $\Omega^{+}(D)^{\#} \rightarrow \Omega^{+}(C)^{\#}$ of coaugmented dg coalgebras. 
Taking duals again we obtain a quasi-isomorphism $\Omega^{+}(C) \rightarrow \Omega^{+}(D)$ of augmented dg algebras, so a weak equivalence $C \rightarrow D$. 

For the following definitions we refer to \cite{LH}, Ch. 2, Section 5. 
Given an augmented $A_{\infty}$-algebra $A$, an \emph{$A_{\infty}$-bimodule over $A$} is a graded $k$-module $M$ provided with morphisms 
$m_{p,q}^{M} : A^{\otimes p} \otimes M \otimes A^{\otimes q} \rightarrow M$ of complete degree $(1-(p+q),0_{G'})$, for each $p, q \in \NN_{0}$ satisfying the following identity on morphisms 
from $A^{\otimes n'} \otimes M \otimes A^{\otimes n''}$ to $M$ given by 
\begin{equation}
\label{eq:ainftyalgebramod}
   \sum_{(r,s,t) \in \mathcal{I}_{n'+n''+1}} (-1)^{r + s t}  \tilde{m}_{r,t}^{M} \circ (\mathrm{id}^{\otimes r} \otimes \tilde{m}_{s} \otimes \mathrm{id}^{\otimes t}) = 0,
\end{equation} 
for all $n', n'' \in \NN$, where we recall that $\mathcal{I}_{n} = \{ (r,s,t) \in \NN_{0} \times \NN \times \NN_{0} : r + s + t = n \}$, and where $\tilde{m}_{s}$ is interpreted as the corresponding multiplication map $m_{s}$
of $A$ if either $r+s \leq n'$ or $s+t \leq n''$, or $\tilde{m}_{s}$ is understood as $m_{n'-r,n''-t}^{M}$ else. 
In the first case, $\tilde{m}_{r,t}^{M}$ is $m_{n'-s+1,n''}^{M}$ if $r+s \leq n'$ or $m_{n',n''-s+1}^{M}$ if $s+t \leq n''$, and it is $m_{r,t}^{M}$ else. 
We have omitted the subindex ($A$ or $M$) on the identity morphisms for it depends on the indices $r, s, t$, and it is clearly deduced from the previous explanation. 
We also assume that $M$ satisfies that $m_{p,q}^{M} \circ (\mathrm{id}^{\otimes r} \otimes \eta_{A} \otimes \mathrm{id}^{\otimes t})$ vanishes for $r \neq p$ and $(p,q) \notin \{ (0,1), (1,0) \}$, 
and that $m_{1,0}^{M} \circ (\eta_{A} \otimes \mathrm{id}_{M}) = \mathrm{id}_{M} = m_{0,1}^{M} \circ (\mathrm{id}_{M} \otimes \eta_{A})$. 
Note that an augmented $A_{\infty}$-algebra is also an $A_{\infty}$-bimodule for the structure maps $m_{p,q} = m_{p+q+1}$, where $p, q \in \NN_{0}$. 
There are also obvious notions of left and right $A_{\infty}$-modules but we will not need them. 

Given two $A_{\infty}$-bimodules $M$ and $N$, a \emph{morphism $f_{\bullet, \bullet}$ of $A_{\infty}$-bimodules from $M$ to $N$} is a collection of morphisms of graded $k$-modules 
$f_{p,q} : A^{\otimes p} \otimes M \otimes A^{\otimes q} \rightarrow N$ for $p, q \in \NN_{0}$ of complete degree $(-p-q,0_{G'})$ satisfying the following identity on the space of morphisms 
from $A^{\otimes n'} \otimes M \otimes A^{\otimes n''}$ to $N$ given by 
\begin{equation}
\label{eq:ainftyalgebramormod}
\begin{split}
   \sum_{(r,s,t) \in \mathcal{I}_{n'+n''+1}} (-1)^{r + s t}  &f_{r',t'} \circ (\mathrm{id}^{\otimes r} \otimes \tilde{m}_{s} \otimes \mathrm{id}^{\otimes t}) 
\\
   &= \sum_{(a,k,l,b) \in \NN_{0,n',n''}} (-1)^{b(-k-l)} m_{a,b}^{N} \circ (\mathrm{id}^{\otimes a}_{A} \otimes f_{k,l} \otimes \mathrm{id}^{\otimes b}_{A}),
\end{split}
\end{equation} 
where $\NN_{0,n',n''}$ is the subset of $\NN^{4}_{0}$ of elements $(a,k,l,b)$ such that $a + k = n'$ and $l + b = n''$, 
and where we should understand $\tilde{m}_{s}$ as $m_{s}^{A}$ if either $r+s \leq n'$ or $s+t \leq n''$, or as $m_{n'-r,n''-t}^{M}$ else. 
The indices $(r',t')$ are completely determined from the previous cases. 
We also suppose that $f_{\bullet,\bullet}$ satisfies that $f_{p,q} \circ (\mathrm{id}^{\otimes r} \otimes \eta_{A} \otimes \mathrm{id}^{\otimes t})$ vanishes for $r \neq p$ and $(p,q) \notin \{ (0,0) \}$. 
We say that it is \emph{strict} if $f_{p,q}$ vanishes for all $(p, q) \neq (0,0)$.
The \emph{composition} of two morphisms $f_{\bullet, \bullet} : M \rightarrow N$ and $g_{\bullet, \bullet} : N \rightarrow P$ is given by the family of maps 
\[     (g \circ f)_{p,q} = \sum_{(a,k,l,b) \in \NN_{0,p,q}} (-1)^{b (-k-l)} g_{a,b} \circ (\mathrm{id}^{\otimes a}_{A} \otimes f_{k,l} \otimes \mathrm{id}^{\otimes b}_{A}).     \]
If $f_{\bullet} : A \rightarrow B$ is a morphism of augmented $A_{\infty}$-algebras and $N$ is an $A_{\infty}$-bimodule over $B$ with structure maps $m_{\bullet,\bullet}$, then it can be easily regarded as an $A_{\infty}$-bimodule over $A$ via the maps $m'_{\bullet,\bullet}$ given by 
\begin{multline}
\label{eq:passage}
      m'_{p,q}
      \\
         = \sum_{r, s \in \NN_{0}} \sum_{(\bar{i},\bar{j}) \in \NN^{r,p} \times \NN^{s,q}} (-1)^{\varepsilon} m_{r,s} \circ (f_{i_{1}} \otimes \dots \otimes f_{i_{r}} \otimes \mathrm{id}_{N} \otimes f_{j_{1}} \otimes \dots \otimes f_{j_{s}}),     
\end{multline}
where we recall that $\NN^{m,n}$ is the subset of $\NN^{m}$ of elements $\bar{i} = (i_{1}, \dots, i_{m})$ such that 
$|\bar{i}| = i_{1} + \dots + i_{m} = n$, and $\varepsilon = \sum_{u=1}^{r} (r+s+1-u) (i_{u} - 1) + \sum_{u=1}^{s} (s-u) (j_{u} - 1)$. 

If $M$ is a dg $A$-bimodule over an augmented dg algebra $A$ and $C$ is a coaugmented $A_{\infty}$-coalgebra, 
then $M \otimes C$ is in fact an $A_{\infty}$-bimodule over $\mathcal{H}om(C,A)$ 
with the structure morphisms given by $m_{0,0}^{M} = d_{M} \otimes \mathrm{id}_{C} + \mathrm{id}_{M} \otimes \Delta_{1}^{C}$, and, for $p + q \geq 1$, 
\begin{multline*}
     m_{p,q}^{M \otimes C} (\phi_{1} \otimes \dots \otimes \phi_{p} \otimes (m \otimes c) \otimes \psi_{1} \otimes \dots \otimes \psi_{q}) 
     \\
     = (-1)^{\epsilon'} (\phi_{1}(c_{(q+2)}) \dots \phi_{p} (c_{(q+p+1)})) . m . (\psi_{1}(c_{(1)}) \dots \psi_{q}(c_{(q)})) \otimes c_{(q+1)},     
\end{multline*}
where $\Delta_{p+q+1}^{C}(c) = c_{(1)} \otimes \dots \otimes c_{(p+q+1)}$, and  
\begin{multline*}
     \epsilon' = \deg c (\sum_{j=1}^{q} \deg \psi_{j}) + \sum_{j=1}^{q-1} \deg c_{(j)} (\deg \psi_{j+1} + \dots + \deg \psi_{q}) 
      \\ 
     + \sum_{l=1}^{p} \deg c_{(q+1+l)} ((\sum_{j=1}^{q} \deg \psi_{j}) + \deg m + (\sum_{j=1}^{q+1} \deg c_{(j)}) + (\sum_{j=1}^{p-1} \deg \phi_{l+j})).     
\end{multline*}
If $M$ is only a left (resp., right) dg module over $A$, we may regard it as a dg $A$-bimodule by means of the augmentation $\epsilon_{A}$, \textit{i.e.} $a . m . a' = \epsilon_{A}(a') a . m$ 
(resp., $a . m . a' = \epsilon_{A}(a) m. a'$), so we may apply the previous construction.  
If $f : M \rightarrow N$ is a morphism of dg $A$-bimodules over an augmented dg algebra $A$ and $C$ is an coaugmented $A_{\infty}$-coalgebra, then the map $f \otimes \mathrm{id}_{C} : M \otimes C \rightarrow N \otimes C$ 
is a strict morphism of $A_{\infty}$-bimodules over $\mathcal{H}om(C,A)$. 
On the other hand, let $f_{\bullet} : C \rightarrow D$ be a morphism of coaugmented $A_{\infty}$-coalgebras and let $A$ be an augmented dg algebra. 
This induces a morphism of augmented $A_{\infty}$-algebras $(f_{\bullet})_{*} : \mathcal{H}om(D,A) \rightarrow \mathcal{H}om(C,A)$, as seen in \eqref{eq:cambiocinfcoal}. 
In particular, given any dg $A$-bimodule $M$, this allows to consider $M \otimes C$ as an $A_{\infty}$-bimodule over $\mathcal{H}om(D,A)$ by means of \eqref{eq:passage}. 
Then, the collection of morphisms 
\[     F_{p,q} : \mathcal{H}om(D,A)^{\otimes p} \otimes (M \otimes C) \otimes \mathcal{H}om(D,A)^{\otimes q} \rightarrow M \otimes D     \]
given by 
\begin{multline*}
     F_{p,q} (\phi_{1} \otimes \dots \otimes \phi_{p} \otimes (m \otimes c) \otimes \psi_{1} \otimes \dots \otimes \psi_{q}) 
     \\
     = (-1)^{\varepsilon'} (\phi_{1}(d_{(q+2)}) \dots \phi_{p} (d_{(q+p+1)})) . m . (\psi_{1}(d_{(1)}) \dots \psi_{q}(d_{(q)})) \otimes d_{(q+1)},     
\end{multline*}
where $f_{p+q+1}(c) = d_{(1)} \otimes \dots \otimes d_{(p+q+1)}$, and  
\begin{multline*}
     \varepsilon' = \deg c (\sum_{j=1}^{q} \deg \psi_{j}) + \sum_{j=1}^{q-1} \deg d_{(j)} (\deg \psi_{j+1} + \dots + \deg \psi_{q}) 
      \\ 
     + \sum_{l=1}^{p} \deg d_{(q+1+l)} ((\sum_{j=1}^{q} \deg \psi_{j}) + \deg m + (\sum_{j=1}^{q+1} \deg d_{(j)}) + (\sum_{j=1}^{p-1} \deg \phi_{l+j})),     
\end{multline*}
defines a morphism of $A_{\infty}$-bimodules over $\mathcal{H}om(D,A)$. 

\subsection{{\texorpdfstring{Twists of $A_{\infty}$-algebras}{Twists of A-infinity-algebras}}}
\label{subsec:twa}

We will now recall the twisting procedure for $A_{\infty}$-algebras given a Maurer-Cartan element, which was introduced in \cite{FOOO1}, Chapter 4 (see also \cite{Fuk}, and \cite{LH}, Ch. 6, Section 6.1). 
Let $A$ be an augmented $A_{\infty}$-algebra and $a \in A^{(1,0_{G'})}$.
Given $k \in \NN_{0}$ and $n \in \NN_{0}$, let us define 
\[       p_{k,n}^{a} : A^{\otimes n} \rightarrow A^{\otimes (k+n)}     \]
as follows. 
If $n \geq 1$,  
\[     p_{k,n}^{a} (x_{1} \otimes \dots \otimes x_{n}) = \sum_{\bar{k} \in \NN_{0}^{n+1,k}} (-1)^{w''} a^{\otimes k_{1}} \otimes x_{1} \otimes a^{\otimes k_{2}} \otimes x_{2} \otimes \dots \otimes a^{\otimes k_{n}} \otimes x_{n} a^{\otimes k_{n+1}},    \]
where $w'' = \sum_{j=2}^{n+1} k_{j} (\deg x_{1} + \dots + \deg x_{j-1} + j - 1)$. 
If $n = 0$, we set 
\[     p_{k,0}^{a} (1_{k}) = a^{\otimes k},     \]
where $a^{\otimes 0} = 1_{k} \in k$. 
We say that the element $a$ is \emph{admissible} if for each $n \in \NN_{0}$, the family of maps $\{ m_{n+k} \circ p_{k,n}^{a} \}_{k \in \NN_{0}}$ is locally finite, if we use the usual notation $m_{0} = \eta_{A}$. 
An admissible element $a \in A^{(1,0_{G'})}$ is said to satisfy the \emph{Maurer-Cartan equation} if the following sum 
\begin{equation}
\label{eq:mc1}
     \sum_{n \in \NN} (-1)^{\frac{n(n+1)}{2}} m_{n}(a, \dots, a)      
\end{equation} 
vanishes. 
Note that the locally finiteness hypothesis on $\{ m_{k} \circ p_{k,0}^{a} \}_{k \in \NN_{0}}$ implies that the previous sum has finite support. 
We remark that we do not assume the usual \emph{tensor-nilpotency} assumption on $a$, for we have replaced it by (weaker) finiteness assumptions.  
In this case, we can twist the augmented $A_{\infty}$-algebra structure on $A$ by defining 
\[     m_{n}^{a} = \sum_{k \in \NN_{0}} (-1)^{\frac{k(k+1)}{2} + k n} m_{n+k} \circ p_{k,n}^{a},     \] 
for $n \in \NN$. 
The admissibility assumption on $a \in A$ implies that the previous higher multiplication morphisms are well-defined. 
It is easy to verify that the have complete degree $(2-n,0_{G'})$ and that in fact 
\[     \mathrm{SI}^{m_{\bullet}^{a}}(n) = \sum_{k \in \NN_{0}} (-1)^{\frac{k(k+1)}{2} + k n} \mathrm{SI}^{m_{\bullet}}(k+n) \circ p_{k,n}^{a},     \]
so the maps $m_{n}^{a}$ together with $\epsilon_{A}$ indeed provide an augmented $A_{\infty}$-algebra structure on $A$, which we shall denote by $(A,m_{\bullet}^{a})$, 
and usually called the \emph{twisted augmented $A_{\infty}$-algebra of $(A,m_{\bullet})$ by $a$}. 
Note that these definitions coincide with the corresponding ones for augmented dg algebras, in the case that $m_{i}$ vanishes for $i \geq 3$.

Let $f_{\bullet} : A \rightarrow B$ be a morphism of augmented $A_{\infty}$-algebras and $a \in A^{(1,0_{G'})}$ an admissible element which satisfies the Maurer-Cartan equation. 
We say that $a$ is \emph{compatible} with $f_{\bullet}$ if the family $\{ f_{n+k} \circ p_{k,n}^{a} \}_{k \in \NN_{0}}$ is locally finite for every $n \in \NN$, where we denote $f_{0} = 0$, 
and the element of $B^{(1,0_{G'})}$ defined by the sum (of finite support, due to the local finiteness assumption on $\{ f_{k} \circ p_{k,0}^{a} \}_{k \in \NN_{0}}$) 
\begin{equation}
\label{eq:mc2}
     \sum_{n \in \NN} (-1)^{\frac{n(n+1)}{2} + 1} f_{n}(a, \dots, a),      
\end{equation} 
which we are going to denote by $b$ and call \emph{the image of $a$ under $f_{\bullet}$}, is admissible. 
Moreover, one also sees that $b$ satisfies the Maurer-Cartan equation. 
Indeed, it is direct (but rather lengthy) to prove that  
\[     \sum_{n \in \NN} (-1)^{\frac{n(n+1)}{2}} m_{n}^{B}(b, \dots, b)      \]
coincides with 
\[      \sum_{n \in \NN} (-1)^{\frac{n(n+1)}{2}} \mathrm{MI}(n)^{m_{\bullet}^{A}}_{r}(a, \dots, a),     \]
and that 
\[      \sum_{n \in \NN} (-1)^{\frac{n(n+1)}{2}} \mathrm{MI}(n)^{m_{\bullet}^{A}}_{l}(a, \dots, a)     \]
vanishes, by the Maurer-Cartan equation of $a$. 
Note that the compatibility assumption tells us that the two previous sums have finite support. 
In this case, one may also twist the morphism $f_{\bullet} : A \rightarrow B$ to a new morphism $f_{\bullet}^{a}$ of augmented $A_{\infty}$-algebras from $(A,m_{\bullet}^{A,a})$ to $(B, m_{\bullet}^{B,b})$ 
given by 
\[     f_{n}^{a} = \sum_{k \in \NN_{0}} (-1)^{\frac{k(k+1)}{2} + k n} f_{n+k} \circ p_{k,n}^{a},     \]
for $n \in \NN$. 
The local finiteness hypothesis on $\{ f_{k+n} \circ p_{k,n}^{a} \}_{k \in \NN_{0}}$ for each $n \in \NN$ tells us that the previous expression is well-defined. 
Furthermore, the previous collection of maps defines a morphism of augmented $A_{\infty}$-algebras, since $\mathrm{MI}(n)^{m_{\bullet}^{a}}_{l}$ coincides with 
\[      \sum_{k \in \NN_{0}} (-1)^{\frac{k(k+1)}{2} + k n} \mathrm{MI}(k+n)^{m_{\bullet}^{A}}_{l} \circ p_{k,n}^{a},     \]
and $\mathrm{MI}(n)^{m_{\bullet}^{a}}_{r}$ coincides with 
\[      \sum_{k \in \NN_{0}} (-1)^{\frac{k(k+1)}{2} + k n} \mathrm{MI}(k+n)^{m_{\bullet}^{A}}_{r} \circ p_{k,n}^{a}.     \]
Again, the compatibility assumption implies that the two previous sums have finite support. 

Let $M$ be an $A_{\infty}$-bimodule over an augmented $A_{\infty}$-algebra $A$, and $a \in A^{(1,0_{G'})}$ an admissible element satisfying the Maurer-Cartan equation on $A$. 
Given $k',k'' \in \NN_{0}$ and $n', n'' \in \NN_{0}$, let us define 
\[       p_{k',n',k''n,''}^{a} : A^{\otimes n'} \otimes M \otimes A^{\otimes n''} \rightarrow A^{\otimes (k'+n')} \otimes M \otimes A^{\otimes (k''+n'')}     \]
as $p_{k',n'}^{a} \otimes \mathrm{id}_{M} \otimes p_{k'',n''}^{a}$, where the first and last tensor factors morphisms were introduced at the beginning of this subsection. 
We say that the element $a$ is \emph{admissible on $M$} if for each $n', n'' \in \NN_{0}$, the family of maps $\{ m_{n'+k',n''+k''} \circ p_{k',n',k'',n''}^{a} \}_{k',k'' \in \NN_{0}}$ is locally finite. 
In this case, we can twist the augmented $A_{\infty}$-bimodule structure on $M$ by defining 
\[     m_{n',n''}^{a} = \sum_{k', k'' \in \NN_{0}} (-1)^{\frac{(k'+k'')(k'+k''+1)}{2} + (k'+k'') (n'+n''+1)} m_{n'+k',n''+k''} \circ p_{k',n',k'',n''}^{a},     \] 
for $n', n'' \in \NN_{0}$. 
The admissibility assumption on $M$ implies that the previous multiplication morphisms are well-defined, and the proof that these maps 
provide an $A_{\infty}$-bimodule structure on $M$ over the twisted augmented $A_{\infty}$-algebra $(A, m^{a}_{\bullet})$, which we shall denote by $(M,m_{\bullet,\bullet}^{a})$, 
is completely analogous to the case of $A_{\infty}$-algebras. 
It will be usually called the \emph{twisted $A_{\infty}$-bimodule of $(M,m_{\bullet,\bullet})$ by $a$}. 
Notice that these definitions coincide with the corresponding ones for bimodules over augmented dg algebras, in the case that $m_{i',i''}$ vanishes for $i' + i'' \in \NN$.

We may generalize the previous situation when the admissibility and compatibility conditions are not satisfied (\textit{cf.} \cite{LH}, Ch. 6, Section 2). 
We will first need some definitions, which are in some sense standard. 
A \emph{topology} on a graded $k$-module $M$ is a decreasing sequence $\{ M_{i} \}_{i \in \NN_{0}}$ of graded $k$-sumodules of $M$, \textit{i.e.} we have 
\[     M_{0} \supseteq M_{1} \supseteq \dots \supseteq M_{i} \supseteq \dots      \]
(\textit{cf.} \cite{Bour2}, Chap. III, \S 2, n$^{\circ}$ 5). 
We understand each graded $k$-submodule $M_{i}$ as a \emph{neighbourhood} of the zero element of $M$, and we say that $M$ is a topological graded $k$-module. 
We say that the topology is \emph{Hausdorff} if $\cap_{i \in \NN_{0}} M_{i} = 0$. 
On the other hand, we regard $k$ with the discrete topology, \textit{i.e.} the one that comes from a filtration for which there exists $i \in \NN_{0}$ such that $k_{i} = \{ 0 \}$. 
A \emph{morphism of topological graded $k$-modules} $M \rightarrow N$ is a morphism of graded $k$-modules (of certain degree) which is continuous for the corresponding topologies, 
\textit{i.e.} if for each $i \in \NN_{0}$ there exists $j \in \NN_{0}$ such that $f(M_{j}) \subseteq N_{i}$. 
We further say that it is \emph{contracting} if $f(M_{i}) \subseteq N_{i}$, for all $i \in \NN_{0}$. 
Given two topological graded $k$-modules $M$ and $N$, the tensor product $M \otimes N$ has a topology given by 
\[     (M \otimes N)_{i} = \sum_{i_{1} + i_{2} \geq i} M_{i_{1}} \otimes N_{i_{2}},     \]
for $i \in \NN_{0}$. 

A \emph{topological augmented $A_{\infty}$-algebra} is an augmented $A_{\infty}$-algebra such that the underlying graded $k$-module has a Hausdorff topology satisfying that all morphisms $m_{i}$, for $i \in \NN$, 
$\eta_{A}$ and $\epsilon_{A}$ are contracting morphisms of topological graded $k$-modules. 
A \emph{morphism of topological augmented $A_{\infty}$-algebras} $f_{\bullet} : A \rightarrow B$ is a morphism of augmented $A_{\infty}$-algebras such that $f_{i}$ 
is a contracting morphisms of topological graded $k$-modules for all $i \in \NN$.  
An element $a \in A^{(1,0_{G'})}$ satisfies the topological Maurer-Cartan equation if it lies in the neighbourhood $A_{1}$ and the sum 
\begin{equation}
\label{eq:mc1topo}
     \sum_{n \in \NN} (-1)^{\frac{n(n+1)}{2}} m_{n}(a, \dots, a)      
\end{equation} 
converges to zero. 
In this case, we may also twist the topological augmented $A_{\infty}$-algebra structure on $A$ by defining 
\[     m_{n}^{a} = \sum_{k \in \NN_{0}} (-1)^{\frac{k(k+1)}{2} + k n} m_{n+k} \circ p_{k,n}^{a},     \] 
for $n \in \NN$, provided that the previous sums are convergent. 
This new structure of topological augmented $A_{\infty}$-algebras is called the \emph{twist of $A$}.  

Let $f_{\bullet} : A \rightarrow B$ be a morphism of topological augmented $A_{\infty}$-algebras and $a \in A^{(1,0_{G'})}$ an element which satisfies the topological Maurer-Cartan equation. 
We say that $a$ is \emph{compatible} with $f_{\bullet}$ if the following sum
\begin{equation}
\label{eq:mc2topo}
     \sum_{n \in \NN} (-1)^{\frac{n(n+1)}{2} + 1} f_{n}(a, \dots, a),      
\end{equation} 
which we are going to denote by $b$, converges in $B$, and it satisfies that the sum defined by the left member of the Maurer-Cartan equation converges. 
We shall call $b$ \emph{the image of $a$ under $f_{\bullet}$}. 
As in the previous case, it is simple to show that the Maurer-Cartan equation is satisfied by $b$. 
In this case, one may also twist the morphism $f_{\bullet} : A \rightarrow B$ of topological augmented $A_{\infty}$-algebra to a new morphism $f_{\bullet}^{a}$ of topological augmented $A_{\infty}$-algebras from $(A,m_{\bullet}^{A,a})$ to $(B, m_{\bullet}^{B,b})$ 
given by 
\[     f_{n}^{a} = \sum_{k \in \NN_{0}} (-1)^{\frac{k(k+1)}{2} + k n} f_{n+k} \circ p_{k,n}^{a},     \]
for $n \in \NN$, provided the sum converges.  

Analogously, a \emph{topological $A_{\infty}$-bimodule} is an  $A_{\infty}$-bimodule such that the underlying graded $k$-module has a Hausdorff topology satisfying that all morphisms $m_{i',i''}$, for $i', i'' \in \NN_{0}$, 
are contracting morphisms of topological graded $k$-modules. 
It is trivial to extend the topological definitions of twisted structure for the case of $A_{\infty}$-bimodules. 


The most typical example of the previous setting 
is the following. 
Let $A$ be an augmented dg algebra and $C$ an Adams connected coaugmented $A_{\infty}$-coalgebra. 
If $J_{C}$ is concentrated in Adams positive degrees, let us define $C_{n}$ the subspace of $C$ given by elements of Adams degree less than or equal to $n$, for $n \in \NN_{0}$. 
If  $J_{C}$ is concentrated in Adams negative degrees, let us define $C_{n}$ the subspace of $C$ given by elements of Adams greater than or equal to $-n$, for $n \in \NN_{0}$. 
Set $C_{-1} = 0$.
By Adams degree considerations we see that the comultiplication $\Delta_{i}$ sends $C_{n}$ to $C_{n}^{\otimes i}$, for all $i \in \NN$ and $n \in \NN$. 
This implies in particular that $C_{n}$ is a coaugmented $A_{\infty}$-coalgebra such that the canonical inclusions $C_{n} \rightarrow C_{m}$ for $n \leq m$ and $C_{n} \rightarrow C$ for $n \in \NN_{0}$ 
are strict morphisms of coaugmented $A_{\infty}$-coalgebras.  
We consider the augmented $A_{\infty}$-algebra structure on $\mathcal{H}om(C,A)$. 
We define a topology on it by defining $\mathcal{H}om(C,A)_{i}$ to be the subset of $\mathcal{H}om(C,A)$ given by sums of maps which vanish on $C_{i-1}$. 
It is easy to see that this defines a Hausdorff decreasing filtration of graded $k$-submodules of $\mathcal{H}om(C,A)$. 
In this case, a \emph{(generalized or homotopical) twisting cochain} from $C$ to $A$ is an element $\tau \in \mathcal{H}om(C,A)^{(1,0_{G'})}_{1}$ satisfying the topological Maurer-Cartan equation such that $\tau \circ \eta_{C} $ and $\epsilon_{A} \circ \tau$ vanish. 
Note that the fact that $\tau$ satisfies the topological Maurer-Cartan equation equation means that 
\[       d_{A} \circ \tau + \sum_{i \in \NN} (-1)^{i(i+1)/2+1} \mu_{A}^{(i)} \circ \tau^{\otimes i} \circ \Delta_{i} = 0,     \]
where the convergence recalled four paragraphs above is a consequence of the fact that the family of morphisms appearing in the previous sum is locally finite.  
The same occurs for the sums defining the twisted multiplications $m_{i}^{\tau}$. 

We recall in this case that, given a dg $A$-bimodule over $A$, the tensor product $M \otimes C$ becomes an $A_{\infty}$-bimodule. 
We consider a topology on it by defining the filtration $\{ (M \otimes C)_{i} \}_{i \in \NN_{0}}$ as follows. 
If $J_{C}$ is concentrated in Adams positive (resp., negative) degrees, we set $(M \otimes C)_{i}$ to be the subspace spanned by tensors $m \otimes c$ where $c$ is homogeneous of 
Adams degree greater (resp., less) than or equal to $i$ (resp., $-i$), for $i \in \NN_{0}$. 

The augmented $A_{\infty}$-algebra structure on $\mathcal{H}om(C,A)$ twisted by a topological twisting cochain $\tau$ will be denoted by $\mathcal{H}om^{\tau}(C,A)$. 
Note that, if $\operatorname{Tw}(C,A)$ denotes the set of topological twisting cochains from $C$ to $A$, 
we have a canonical map 
\begin{equation}
\label{eq:isostw2}
     \operatorname{Hom}_{\text{aug-dg-alg}} (\Omega^{+}(C),A) \rightarrow \operatorname{Tw}(C,A),     
\end{equation}
given by $g \mapsto g \circ \tau^{C}$, where $\tau^{C} : C \rightarrow \Omega^{+}(C)$ is given by the composition of the canonical projection $C \rightarrow C/\operatorname{Im}(\eta_{C})$, $s_{C/\operatorname{Im}(\eta_{C})[-1]}^{-1}$ and the canonical inclusion 
of $C/\operatorname{Im}(\eta_{C})[-1]$ inside $\Omega^{+}(C)$. 
It is clear that the map \eqref{eq:isostw2} is a bijection (see \cite{Prou}, Lemme 3.17). 
Furthermore, by means of the previous morphism we can define the \emph{composition (topological) twisting cochain} of a morphism of coaugmented $A_{\infty}$-coalgebras $f_{\bullet} : C' \rightarrow C$ with 
a twisting cochain $\tau$ from $C$ to $A$. 
Indeed, if $F_{\tau} \in \operatorname{Hom}_{\text{aug-dg-alg}} (\Omega^{+}(C),A)$ is the morphism such that $F_{\tau} \circ \tau^{C} = \tau$, and $\Omega^{+}(f_{\bullet})$ is the morphism of augmented dg algebras 
from $\Omega^{+}(C')$ to $\Omega^{+}(C)$, the composition twisting cochain $\tau \circ f_{\bullet}$ is defined as $F_{\tau} \circ \Omega^{+}(f_{\bullet}) \circ \tau^{C'}$. 
We see in fact that the twisting cochain defined by this composition is exactly the same as the expression \eqref{eq:mc2}. 
This means in particular that the sum given there converges.  
Moreover, the convergence of the sums appearing in the definition of twisting of the morphism $f_{\bullet}$ of coaugmented $A_{\infty}$-coalgebras follows from the locally finite assumption on the latter.  
On the other hand, for $M$ a dg bimodule over $A$, the twisted $A_{\infty}$-bimodule structure over $\mathcal{H}om^{\tau}(C,A)$ on the tensor product $M \otimes C$ is usually denoted by $M \otimes_{\tau} C$, 
where the convergence assumptions hold trivially. 

\subsection{{\texorpdfstring{Hochschild cohomology of nonnegatively graded connected algebras}{Hochschild cohomology of nonnegatively graded connected algebras}}}
\label{subsec:hhainf}

We recall the following theorem, which must be well-known to the experts. 
It was announced by B. Keller at the X ICRA of Toronto, Canada, in 2002.
\begin{theorem}
\label{theorem:keller}
Let $C$ be a minimal coaugmented $A_{\infty}$-coalgebra and $A$ be a nonnegatively (Adams) graded connected algebra, which we regard in zero (co)homological degree. 
Then, the following are equivalent:
\begin{itemize}
\item[(i)] There is a quasi-isomorphism of augmented minimal $A_{\infty}$-algebras 
\[     \mathcal{E}xt_{A}^{\bullet}(k,k) \rightarrow C^{\#}.     \]
\item[(ii)] There is a twisting cochain $\tau : C \rightarrow A$ such that the twisted tensor product ${}_{\epsilon_{A}}A \otimes_{\tau} C$ is a minimal projective resolution of the trivial left $A$-module $k$, 
where ${}_{\epsilon}A$ denotes the $A$-bimodule structure on $A$ with the action induced by the augmentation $\epsilon_{A}$ of $A$ on the left and with the standard action on the right.
\item[(iii)] There is a twisting cochain $\tau : C \rightarrow A$ such that the twisted tensor product $A^{e} \otimes_{\tau} C$ is a minimal projective resolution of the regular $A$-bimodule $A$.
\end{itemize}
\end{theorem}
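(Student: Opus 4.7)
The plan is to reduce all three conditions to a single intermediate one: the existence of a weak equivalence of coaugmented $A_{\infty}$-coalgebras $f_{\bullet}:C\to B^{+}(A)$, i.e.\ a morphism (in the sense of Subsection \ref{subsec:gena}) whose cobar $\Omega^{+}(f_{\bullet})$ is a quasi-isomorphism of augmented dg algebras. The twisting cochain realising (ii) and (iii) will then be $\tau=\tau_{A}\circ f_{\bullet}$ (the composition construction from the end of Subsection \ref{subsec:twa}), which under the bijection \eqref{eq:isostw2} corresponds to the morphism $F_{\tau}=\beta_{A}\circ\Omega^{+}(f_{\bullet}):\Omega^{+}(C)\to A$ of augmented dg algebras; the latter is a quasi-isomorphism since $\beta_{A}$ is (see the last paragraph of Subsubsection \ref{subsubsec:cobar}).

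For the equivalence of (i) with the existence of $f_{\bullet}$, I would proceed by dualisation. Since $A$ is nonnegatively Adams graded with $A^{0}=k$ it is Adams connected in the sense of \cite{LPWZ08}, Def.\ 2.1, so $B^{+}(A)$ and $E(A)=B^{+}(A)^{\#}$ are locally finite dimensional and Adams connected. By Kadeishvili's minimal model theorem in the Adams-graded setting (see \cite{LPWZ09}), the cohomology $\mathcal{E}xt_{A}^{\bullet}(k,k)$ carries a canonical minimal augmented $A_{\infty}$-algebra structure quasi-isomorphic to $E(A)$, and dually $\operatorname{Tor}^{A}_{\bullet}(k,k)$ carries a canonical minimal coaugmented $A_{\infty}$-coalgebra structure together with a weak equivalence $\iota:\operatorname{Tor}_{\bullet}^{A}(k,k)\to B^{+}(A)$. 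Since a quasi-isomorphism between minimal $A_{\infty}$-objects is automatically a strict isomorphism on underlying graded modules, (i) is equivalent to the existence of a strict isomorphism of minimal $A_{\infty}$-coalgebras $\operatorname{Tor}_{\bullet}^{A}(k,k)\to C$ (via the $A_{\infty}$-analogue of the duality isomorphism $j^{C}$ from Subsubsection \ref{subsubsec:barcobar}, which takes sense thanks to the local finite dimensionality); composing with $\iota$ yields $f_{\bullet}$, and the converse is immediate.

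Given the weak equivalence $f_{\bullet}$, I would invoke an $A_{\infty}$-version of \cite{LH}, Prop.\ 2.2.4.1: for a (topological) twisting cochain $\tau:C\to A$, the corresponding morphism $F_{\tau}:\Omega^{+}(C)\to A$ is a quasi-isomorphism of augmented dg algebras if and only if $A^{e}\otimes_{\tau}C$ is a semi-free resolution of $A$ as an $A^{e}$-module, if and only if ${}_{\epsilon_{A}}A\otimes_{\tau}C$ is a semi-free resolution of $k$ as a left $A$-module. The argument proceeds by comparison of filtered spectral sequences, exploiting the exhaustive filtration of $C$ coming from the Adams grading (with $C^{0,0}=k$ by connectedness) which is compatible with $\tau$ and with every higher $\Delta_{n}$, together with the classical reduction $k\otimes_{A}({}_{\epsilon_{A}}A\otimes_{\tau}C)\simeq C$ endowed with a differential that vanishes on the $E^{1}$-page.

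Finally, the minimality of the two resolutions in (ii) and (iii) follows from the minimality of $C$ together with the Adams connectedness of $A$: since $\Delta_{1}=0$ and $\tau$ takes values in the augmentation ideal $I_{A}$ (as $\epsilon_{A}\circ\tau=0$ and $I_{A}$ is strictly positively Adams graded), every summand of the twisted differential on $A^{e}\otimes_{\tau}C$ or on ${}_{\epsilon_{A}}A\otimes_{\tau}C$ contains at least one factor lying in $I_{A}$, hence vanishes after applying $k\otimes_{A^{e}}(-)$ or $k\otimes_{A}(-)$, respectively; this is precisely the characterisation of minimality for a free resolution. The main obstacle is the careful $A_{\infty}$-generalisation of \cite{LH}, Prop.\ 2.2.4.1 inside the topological framework of Subsection \ref{subsec:twa}, and in particular establishing strong convergence of the comparison spectral sequence; here the Adams connectedness hypothesis is essential, since it ensures that the Adams filtration is exhaustive and bounded below on each bidegree, so that on the $E^{1}$-page one is comparing projective resolutions of the same graded object and standard convergence results apply.
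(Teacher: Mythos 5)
Your proposal takes a completely different route from the paper, and the route has a genuine gap at its centre. The paper's proof is short and modular: the equivalence of (i) and (ii) is quoted from \cite{Hers}, Thm.~4.7; (iii)$\Rightarrow$(ii) is obtained by choosing an $A$-linear contracting homotopy $s$ of the (exact, projective) cone of $A^{e}\otimes_{\tau}C\rightarrow A$ and applying $k\otimes_{A}(\place)$ to obtain the contracting homotopy $\mathrm{id}_{k}\otimes_{A}s$ of the cone of ${}_{\epsilon_{A}}A\otimes_{\tau}C\rightarrow k$; and (ii)$\Rightarrow$(iii) reduces to the exactness of $A^{e}\otimes_{\tau}C$ in positive homological degrees, which is \cite{BM}, Prop.~4.1. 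Nowhere does the paper need a general statement of the form ``$F_{\tau}$ is a quasi-isomorphism if and only if the twisted tensor products are resolutions.''

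The gap in your argument is that the entire proof hangs on ``an $A_{\infty}$-version of \cite{LH}, Prop.~2.2.4.1'' inside the topological framework of Subsection \ref{subsec:twa}, which you invoke but never establish; you yourself flag it as the main obstacle. That lemma is exactly where the content of the theorem lives --- without it you have shown neither that a weak equivalence $f_{\bullet}:C\rightarrow B^{+}(A)$ produces the resolutions in (ii) and (iii), nor the converse implications --- so as written the proposal is a reduction of the theorem to an unproved and nontrivial auxiliary result rather than a proof. A secondary, fixable error: a quasi-isomorphism between minimal $A_{\infty}$-(co)algebras is \emph{not} automatically a strict isomorphism; only its first component $f_{1}$ is an isomorphism of graded modules, the higher components may be nonzero, and two quasi-isomorphic minimal structures need not be strictly isomorphic. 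Fortunately you do not need strictness: composing the dualised $A_{\infty}$-quasi-isomorphism $C\rightarrow\operatorname{Tor}_{\bullet}^{A}(k,k)$ with the weak equivalence $\iota$ already yields the desired $f_{\bullet}$. Your minimality argument (from $\Delta_{1}=0$ and $\tau(C)\subseteq I_{A}$ with $I_{A}$ concentrated in strictly positive Adams degrees) is correct and is the standard characterisation.
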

\begin{proof}
A short proof of the equivalence between (i) and (ii) was given in \cite{Hers}, Thm. 4.7. 
Moreover, the condition (iii) implies (ii) by a standard argument. 
Indeed, since $A^{e} \otimes_{\tau} C$ is a minimal projective resolution of $A$, the cone 
of the quasi-isomorphism $A^{e} \otimes_{\tau} C \rightarrow A$ is exact. 
Since an exact complex of projective left $A$-modules is homotopically trivial, there exists a 
contracting homotopy $s$ for it, which is $A$-linear.
Then $\mathrm{id}_{k} \otimes_{A} s$ is a contracting homotopy of the cone of 
$k \otimes_{A} (A^{e} \otimes_{\tau} C) \simeq {}_{\epsilon_{A}}A \otimes_{\tau} C \rightarrow k \otimes_{A} A \simeq k$. 
To prove that condition (ii) implies (iii), we only have to show that $A^{e} \otimes_{\tau} C$ is exact in positive homological degrees. 
This follow from \cite{BM}, Prop. 4.1.
\end{proof}

Note that, following our conventions, the differential of ${}_{\epsilon_{A}}A \otimes_{\tau} C$ is the map $m_{0,0}^{{}_{\epsilon_{A}}A \otimes_{\tau} C}$ given by the twisting of the multiplication $m_{0,0}^{{}_{\epsilon_{A}}A \otimes C}$, and its explicit expression is 
\[     \mathrm{id}_{A} \otimes \Delta_{1} + \sum_{i \in \NN_{\geq 2}} (-1)^{\frac{i(i+1)}{2}+1} (\mu_{A}^{(i)} \otimes \mathrm{id}_{C}) \circ (\mathrm{id}_{A} \otimes \tau^{\otimes (i-1)} \otimes \mathrm{id}_{C}) 
\circ (\mathrm{id}_{A} \otimes \Delta_{i}).     \]

Let us suppose that $A$ be a nonnegatively graded connected algebra and that $C$ is a minimal Adams connected coaugmented $A_{\infty}$-coalgebra quasi-isomorphic to $B^{+}(A)$, 
so we have a morphism $f_{\bullet} : C \rightarrow B^{+}(A)$ of coaugmented $A_{\infty}$-coalgebras. 
Under our assumptions this is tantamount to satisfying item (i) of the previous theorem. 
Note that $B^{+}(A)$ is also Adams connected, for $A$ is nonnegatively graded and connected. 
By the comments in the thirteenth paragraph of the previous subsection we get a quasi-isomorphism $\mathcal{H}om (B^{+}(A),A) \rightarrow \mathcal{H}om (C,A)$ 
of topological augmented $A_{\infty}$-algebras. 
Moreover, taking a look at the proof of the previous theorem given in the reference, it is easy to check that the previous quasi-isomorphism sends $\tau_{A}$ to the twisting cochain $\tau$ mentioned 
in the item (ii) of the same theorem. 
The comments in the last three paragraphs of the previous subsection tell us that we have a morphism 
$\mathcal{H}om^{\tau_{A}} (B^{+}(A),A) \rightarrow \mathcal{H}om^{\tau} (C,A)$ of augmented $A_{\infty}$-algebras. 
Moreover, its first component coincides with the map given by applying the functor $\mathcal{H}om_{A^{e}} (\place , A)$ to the comparison morphism $A^{e} \otimes_{\tau} C \rightarrow A^{e} \otimes_{\tau_{A}} B^{+}(A)$ of resolutions 
of the $A$-bimodule $A$ induced by $f_{\bullet}$ and explained in the last paragraph of Subsection \ref{subsec:gena}. 
The comparison map is indeed a quasi-isomorphism by the equivalence between conditions (i) and (iii) in the previous theorem. 
We conclude that the first component of the morphism 
$\mathcal{H}om^{\tau_{A}} (B^{+}(A),A) \rightarrow \mathcal{H}om^{\tau} (C,A)$ of augmented $A_{\infty}$-algebras is in fact a quasi-isomorphism. 
In particular, we may compute the graded algebra structure on the Hochschild cohomology $HH^{\bullet}(A)$ by means of the graded algebra structure 
$H^{\bullet}(\mathcal{H}om^{\tau} (C,A))$ induced by the multiplication $m_{2}$ of the latter. 
The case of Hochschild homology is treated in a similar fashion. 

The comments in the previous paragraph can be summarized in the following result.
\begin{theorem}
\label{theorem:final}
Let $A$ nonnegatively (Adams) graded connected algebra, which we regard in zero (co)homological degree, and let $C$ be a minimal coaugmented $A_{\infty}$-coalgebra such that there is a quasi-isomorphism of augmented minimal $A_{\infty}$-algebras 
\[     \mathcal{E}xt_{A}^{\bullet}(k,k) \rightarrow C^{\#}.     \]
Then, there exists a quasi-isomorphism of $A_{\infty}$-algebras between 
$\mathcal{H}om^{\tau} (C,A)$ and the cochain complex computing the Hochschild 
cohomology of $A$, which in particular induces an isomorphism of graded algebras 
between $H^{\bullet}(\mathcal{H}om^{\tau} (C,A))$ and $HH^{\bullet}(A)$. 
Moreover, using the previous quasi-isomorphism of $A_{\infty}$-algebras, 
there exists a quasi-isomorphism of $A_{\infty}$-bimodules between 
$A \otimes^{\tau} C$ and the cochain complex computing the Hochschild 
cohomology of $A$, which in particular induces an isomorphism of graded bimodules 
between $H_{\bullet}(A \otimes^{\tau} C)$ and $HH_{\bullet}(A)$. 
\end{theorem}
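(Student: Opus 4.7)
The plan is to leverage Theorem \ref{theorem:keller} together with the pushforward/pullback functoriality established in Subsections \ref{subsec:gena} and \ref{subsec:twa}, and then identify the resulting object with the Hochschild complex via Fact \ref{fact:hh}. First, since $A$ is nonnegatively Adams graded and connected, it is in particular Adams connected in zero cohomological degree, and the hypothesis gives condition (i) of Theorem \ref{theorem:keller}. I would invoke that theorem to produce a twisting cochain $\tau : C \rightarrow A$ such that the canonical morphism $A^{e} \otimes_{\tau} C \rightarrow A$ is a minimal projective resolution of $A$ as a dg bimodule (condition (iii)). Simultaneously, the hypothesis is equivalent to the existence of a weak equivalence $f_{\bullet} : C \rightarrow B^{+}(A)$ of coaugmented $A_{\infty}$-coalgebras, and by inspecting the proof of Theorem \ref{theorem:keller} (as referenced in \cite{Hers}), the composition twisting cochain $\tau_{A} \circ f_{\bullet}$ agrees with $\tau$.

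Next, I would apply the pullback construction of \eqref{eq:cambiocinfcoal}, combined with the topological framework of the last paragraphs of Subsection \ref{subsec:twa}, to the morphism $f_{\bullet}$. Since $C$ and $B^{+}(A)$ are both Adams connected, the resulting maps are morphisms of topological augmented $A_{\infty}$-algebras $(f_{\bullet})_{*} : \mathcal{H}om(B^{+}(A),A) \rightarrow \mathcal{H}om(C,A)$, and admissibility/convergence conditions hold automatically. By the identification $\tau = \tau_{A} \circ f_{\bullet}$ established above, the twisting procedure gives a morphism of augmented $A_{\infty}$-algebras
\[
(f_{\bullet})_{*}^{\tau_{A}} : \mathcal{H}om^{\tau_{A}}(B^{+}(A),A) \longrightarrow \mathcal{H}om^{\tau}(C,A).
\]
To see that its first component is a quasi-isomorphism, I would note that this first component is obtained by applying $\mathcal{H}om_{A^{e}}(\place,A)$ to the comparison morphism $A^{e} \otimes_{\tau} C \rightarrow A^{e} \otimes_{\tau_{A}} B^{+}(A)$ of the last paragraph of Subsection \ref{subsec:gena}. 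Since both are semi-free resolutions of $A$ as a dg $A$-bimodule (the first by condition (iii) of Theorem \ref{theorem:keller}, the second by Fact \ref{fact:hh} and the lemma at the end of Subsection \ref{subsec:barrhoch}), the comparison is a quasi-isomorphism between homotopically projective resolutions, and so is its image under $\mathcal{H}om_{A^{e}}(\place,A)$. Combining with Fact \ref{fact:hh}(i), which identifies $\mathcal{H}om^{\tau_{A}}(B^{+}(A),A)$ with the Hochschild cochain complex and the convolution $m_{2}$ with the cup product, I obtain the desired quasi-isomorphism of $A_{\infty}$-algebras and, on taking $H^{\bullet}$, the isomorphism of graded algebras $H^{\bullet}(\mathcal{H}om^{\tau}(C,A)) \simeq HH^{\bullet}(A)$.

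For the second assertion, I would run the parallel construction for bimodules spelled out in the last paragraph of Subsection \ref{subsec:gena}: the morphism $f_{\bullet}$ induces a morphism of $A_{\infty}$-bimodules $F_{\bullet,\bullet} : A \otimes C \rightarrow A \otimes B^{+}(A)$ over $\mathcal{H}om(B^{+}(A),A)$ (with the domain viewed through $(f_{\bullet})_{*}$), and after twisting by $\tau_{A}$ this produces a morphism of $A_{\infty}$-bimodules $A \otimes_{\tau} C \rightarrow A \otimes_{\tau_{A}} B^{+}(A)$ over the algebra map of the previous paragraph. Its first component is obtained by applying $A \otimes_{A^{e}} (\place)$ to the same comparison of semi-free resolutions, hence is a quasi-isomorphism. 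Fact \ref{fact:hh}(ii) then identifies the target with the Hochschild chain complex equipped with its symmetric bimodule structure (the cap product) over the Hochschild cochain complex, giving $H_{\bullet}(A \otimes_{\tau} C) \simeq HH_{\bullet}(A)$ as graded bimodules over $HH^{\bullet}(A)$.

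The main obstacle I expect is checking the convergence and compatibility of all the twisting operations in the topological $A_{\infty}$-framework of Subsection \ref{subsec:twa}: specifically, that the pullback $(f_{\bullet})_{*}$ is a morphism of topological augmented $A_{\infty}$-algebras sending the topological twisting cochain $\tau_{A}$ precisely to $\tau$ (not merely to something cohomologous), and that the twisted bimodule maps are well-defined with respect to the twisted algebra map. Both reduce to local finiteness statements that follow from the Adams connectedness of $C$ and $B^{+}(A)$, but the sign and convergence bookkeeping through formulas \eqref{eq:mc1topo} and \eqref{eq:mc2topo} is the most delicate point. The remaining steps are essentially formal consequences of Fact \ref{fact:hh} and the comparison principle for semi-free resolutions.
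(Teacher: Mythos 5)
Your proposal follows essentially the same route as the paper: invoke Theorem \ref{theorem:keller} to obtain the twisting cochain $\tau$ and the minimal projective (semi-free) resolution $A^{e} \otimes_{\tau} C$, pull back along $f_{\bullet}$ via \eqref{eq:cambiocinfcoal} in the topological framework of Subsection \ref{subsec:twa}, twist to get the morphism $\mathcal{H}om^{\tau_{A}}(B^{+}(A),A) \rightarrow \mathcal{H}om^{\tau}(C,A)$, and identify its first component with $\mathcal{H}om_{A^{e}}(\place,A)$ applied to the comparison of resolutions, concluding with Fact \ref{fact:hh}; the homology half is handled by the parallel bimodule construction exactly as the paper indicates. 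The argument is correct and matches the paper's proof in structure and in all essential steps.
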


The first part of the previous theorem gives a different proof of the main result of \cite{XX}, Thm. 3.2, just taking into account that the $A_{\infty}$-coalgebra $C$ is just the dual of the one given in \cite{HeL}, Thm. 6.5, 
and the twisting cochain is given by minus the composition of the canonical projection $C \rightarrow V$ and the canonical inclusion $V \rightarrow A$.  
However, our result may be applied to compute the graded algebra structure of Hochschild cohomology of nonnegatively graded connected algebras under much more general situation, 
\textit{e.g.} if $A$ is multi-Koszul in the sense of \cite{Hers}, by applying Thm. 4.8 of that article (taking into account however that the definition of Maurer-Cartan equation has a different sign convention). 

\bibliographystyle{model1-num-names}
\addcontentsline{toc}{section}{References}

\begin{bibdiv}
\begin{biblist}

\bib{AH}{article}{
   author={Avramov, Luchezar},
   author={Halperin, Stephen},
   title={Through the looking glass: a dictionary between rational homotopy
   theory and local algebra},
   conference={
      title={Algebra, algebraic topology and their interactions (Stockholm,
      1983)},
   },
   book={
      series={Lecture Notes in Math.},
      volume={1183},
      publisher={Springer},
      place={Berlin},
   },
   date={1986},
   pages={1--27},
}

\bib{AFH}{article}{
  title={Differential graded homological algebra},
  author={Avramov, Luchezar},
  author={Foxby, Hans-Bjorn},
  author={Halperin, Stephen},
  note={Preprint},
}

\bib{BM}{article}{
   author={Berger, Roland},
   author={Marconnet, Nicolas},
   title={Koszul and Gorenstein properties for homogeneous algebras},
   journal={Algebr. Represent. Theory},
   volume={9},
   date={2006},
   number={1},
   pages={67--97},
}

\bib{Bour}{book}{
   author={Bourbaki, N.},
   title={\'El\'ements de math\'ematique. Alg\`ebre. Chapitre 10. Alg\`ebre
   homologique},
   language={French},
   note={Reprint of the 1980 original [Masson, Paris; MR0610795]},
   publisher={Springer-Verlag},
   place={Berlin},
   date={2007},
   pages={viii+216},
}

\bib{Bour2}{book}{
   author={Bourbaki, N.},
   title={\'El\'ements de math\'ematique. Fascicule XXVIII. Alg\`ebre
   commutative. Chapitre 3: Graduations, filtra- tions et topologies.
   Chapitre 4: Id\'eaux premiers associ\'es et d\'ecomposition primaire},
   language={French},
   series={Actualit\'es Scientifiques et Industrielles, No. 1293},
   publisher={Hermann, Paris},
   date={1961},
   pages={183},
}

\bib{BGSS}{article}{
   author={Buchweitz, Ragnar-Olaf},
   author={Green, Edward L.},
   author={Snashall, Nicole},
   author={Solberg, {\O}yvind},
   title={Multiplicative structures for Koszul algebras},
   journal={Q. J. Math.},
   volume={59},
   date={2008},
   number={4},
   pages={441--454},
}

\bib{CE}{book}{
   author={Cartan, Henri},
   author={Eilenberg, Samuel},
   title={Homological algebra},
   publisher={Princeton University Press},
   place={Princeton, N. J.},
   date={1956},
   pages={xv+390},
}

\bib{FT}{article}{
   author={Fe{\u\i}gin, B. L.},
   author={Tsygan, B. L.},
   title={Cyclic homology of algebras with quadratic relations, universal
   enveloping algebras and group algebras},
   conference={
      title={$K$-theory, arithmetic and geometry},
      address={Moscow},
      date={1984--1986},
   },
   book={
      series={Lecture Notes in Math.},
      volume={1289},
      publisher={Springer},
      place={Berlin},
   },
   date={1987},
   pages={210--239},
}

\bib{FHT}{article}{
   author={F{\'e}lix, Yves},
   author={Halperin, Steve},
   author={Thomas, Jean-Claude},
   title={Differential graded algebras in topology},
   conference={
      title={Handbook of algebraic topology},
   },
   book={
      publisher={North-Holland},
      place={Amsterdam},
   },
   date={1995},
   pages={829--865},
}

\bib{FHT01}{book}{
   author={F{\'e}lix, Yves},
   author={Halperin, Stephen},
   author={Thomas, Jean-Claude},
   title={Rational homotopy theory},
   series={Graduate Texts in Mathematics},
   volume={205},
   publisher={Springer-Verlag},
   place={New York},
   date={2001},
   pages={xxxiv+535},
}

\bib{FMT}{article}{
   author={F{\'e}lix, Yves},
   author={Menichi, Luc},
   author={Thomas, Jean-Claude},
   title={Gerstenhaber duality in Hochschild cohomology},
   journal={J. Pure Appl. Algebra},
   volume={199},
   date={2005},
   number={1-3},
   pages={43--59},
}

\bib{FTVP}{article}{
   author={Felix, Yves},
   author={Thomas, Jean-Claude},
   author={Vigu{\'e}-Poirrier, Micheline},
   title={The Hochschild cohomology of a closed manifold},
   journal={Publ. Math. Inst. Hautes \'Etudes Sci.},
   number={99},
   date={2004},
   pages={235--252},
}

\bib{Fuk}{article}{
   author={Fukaya, Kenji},
   title={Deformation theory, homological algebra and mirror symmetry},
   conference={
      title={Geometry and physics of branes},
      address={Como},
      date={2001},
   },
   book={
      series={Ser. High Energy Phys. Cosmol. Gravit.},
      publisher={IOP, Bristol},
   },
   date={2003},
   pages={121--209},
}

\bib{FOOO1}{book}{
   author={Fukaya, Kenji},
   author={Oh, Yong-Geun},
   author={Ohta, Hiroshi},
   author={Ono, Kaoru},
   title={Lagrangian intersection Floer theory: anomaly and obstruction.
   Part I},
   series={AMS/IP Studies in Advanced Mathematics},
   volume={46},
   publisher={American Mathematical Society, Providence, RI; International
   Press, Somerville, MA},
   date={2009},
   pages={xii+396},
}


\bib{Ger}{article}{
   author={Gerstenhaber, Murray},
   title={The cohomology structure of an associative ring},
   journal={Ann. of Math. (2)},
   volume={78},
   date={1963},
   pages={267--288},
}

\bib{Ge}{article}{
   author={Getzler, Ezra},
   title={Lie theory for nilpotent $L_\infty$-algebras},
   journal={Ann. of Math. (2)},
   volume={170},
   date={2009},
   number={1},
   pages={271--301},
}


\bib{Gin}{article}{
   author={Ginzburg, Victor},
   title={Lectures on noncommutative geometry},
   type={Preprint},
   date={2005},
   eprint={arXiv:math/0506603v1},
}

\bib{HeL}{article}{
   author={He, Ji-Wei},
   author={Lu, Di-Ming},
   title={Higher Koszul algebras and $A$-infinity algebras},
   journal={J. Algebra},
   volume={293},
   date={2005},
   number={2},
   pages={335--362},
}

\bib{Hers}{article}{
   author={Herscovich, Estanislao},
   title={On the multi-Koszul property for connected algebras},
   journal={Doc. Math.},
   volume={18},
   date={2013},
   pages={1301--1347},
}

\bib{HMS}{article}{
   author={Husemoller, Dale},
   author={Moore, John C.},
   author={Stasheff, James},
   title={Differential homological algebra and homogeneous spaces},
   journal={J. Pure Appl. Algebra},
   volume={5},
   date={1974},
   pages={113--185},
}



\bib{Ke}{article}{
   author={Keller, Bernhard},
   title={Deriving DG categories},
   journal={Ann. Sci. \'Ecole Norm. Sup. (4)},
   volume={27},
   date={1994},
   number={1},
   pages={63--102},
}

\bib{Kel}{article}{
   author={Keller, Bernhard},
   title={Derived invariance of higher structures on the Hochschild complex},
   date={2003},
   eprint={http://www.math.jussieu.fr/~keller/publ/dih.dvi},
   note={Preprint},
}


\bib{LH}{thesis}{
   author={Lef\`evre-Hasegawa, Kenji},
   title={Sur les $A_{\infty}$-cat\'egories},
   language={French},
   type={Ph.D. Thesis},
   place={Paris},
   date={2003},
   note={Corrections at \url{http://www.math.jussieu.fr/\~keller/lefevre/TheseFinale/corrainf.pdf}},
}

\bib{Lo}{article}{
   author={L\"ofwall, Clas},
   title={Hochschild homology},
   language={Swedish},
   type={Lecture Notes},
   date={1995},
   note={Preprint},
}

\bib{LPWZ04}{article}{
   author={Lu, D. M.},
   author={Palmieri, J. H.},
   author={Wu, Q. S.},
   author={Zhang, J. J.},
   title={$A_\infty$-algebras for ring theorists},
   booktitle={Proceedings of the International Conference on Algebra},
   journal={Algebra Colloq.},
   volume={11},
   date={2004},
   number={1},
   pages={91--128},
}

\bib{LPWZ08}{article}{
   author={Lu, Di Ming},
   author={Palmieri, John H.},
   author={Wu, Quan Shui},
   author={Zhang, James J.},
   title={Koszul equivalences in $A_\infty$-algebras},
   journal={New York J. Math.},
   volume={14},
   date={2008},
   pages={325--378},
}

\bib{LPWZ09}{article}{
   author={Lu, D.-M.},
   author={Palmieri, J. H.},
   author={Wu, Q.-S.},
   author={Zhang, J. J.},
   title={$A$-infinity structure on Ext-algebras},
   journal={J. Pure Appl. Algebra},
   volume={213},
   date={2009},
   number={11},
   pages={2017--2037},
}

\bib{Me}{article}{
   author={Menichi, Luc},
   title={Batalin-Vilkovisky algebra structures on Hochschild cohomology},
   date={2007},
   type={Preprint},
   eprint={arXiv.org/pdf/0711.1946v2.pdf},
}

\bib{NT}{article}{
   author={Nest, Ryszard},
   author={Tsygan, Boris},
   title={On the cohomology ring of an algebra},
   conference={
      title={Advances in geometry},
   },
   book={
      series={Progr. Math.},
      volume={172},
      publisher={Birkh\"auser Boston},
      place={Boston, MA},
   },
   date={1999},
   pages={337--370},
}

\bib{PP}{book}{
   author={Polishchuk, Alexander},
   author={Positselski, Leonid},
   title={Quadratic algebras},
   series={University Lecture Series},
   volume={37},
   publisher={American Mathematical Society},
   place={Providence, RI},
   date={2005},
   pages={xii+159},
}

\bib{Prou}{article}{
   author={Prout{\'e}, Alain},
   title={$A_\infty$-structures. Mod\`eles minimaux de Baues-Lemaire et
   Kadeishvili et homologie des fibrations},
   language={French},
   note={Reprint of the 1986 original;
   With a preface to the reprint by Jean-Louis Loday},
   journal={Repr. Theory Appl. Categ.},
   number={21},
   date={2011},
   pages={1--99},
}

\bib{TT}{article}{
   author={Tamarkin, Dmitri},
   author={Tsygan, Boris},
   title={The ring of differential operators on forms in noncommutative
   calculus},
   conference={
      title={Graphs and patterns in mathematics and theoretical physics},
   },
   book={
      series={Proc. Sympos. Pure Math.},
      volume={73},
      publisher={Amer. Math. Soc.},
      place={Providence, RI},
   },
   date={2005},
   pages={105--131},
}

\bib{T}{article}{
   author={Tsygan, Boris},
   title={Cyclic homology},
   conference={
      title={Cyclic homology in non-commutative geometry},
   },
   book={
      series={Encyclopaedia Math. Sci.},
      volume={121},
      publisher={Springer},
      place={Berlin},
   },
   date={2004},
   pages={73--113},
}

\bib{VP}{article}{
   author={Vigu{\'e}-Poirrier, Micheline},
   title={Homologie de Hochschild et homologie cyclique des alg\`ebres
   diff\'erentielles gradu\'ees},
   language={French},
   note={International Conference on Homotopy Theory (Marseille-Luminy,
   1988)},
   journal={Ast\'erisque},
   number={191},
   date={1990},
   pages={7, 255--267},
}

\bib{W}{book}{
   author={Weibel, Charles A.},
   title={An introduction to homological algebra},
   series={Cambridge Studies in Advanced Mathematics},
   volume={38},
   publisher={Cambridge University Press},
   place={Cambridge},
   date={1994},
   pages={xiv+450},
}

\bib{XX}{article}{
   author={Xu, Yunge},
   author={Xiang, Huali},
   title={Hochschild cohomology rings of $d$-Koszul algebras},
   journal={J. Pure Appl. Algebra},
   volume={215},
   date={2011},
   number={1},
   pages={1--12},
}

\end{biblist}
\end{bibdiv}

\end{document}